\tikzset{node distance=1.5cm, auto}
\definecolor{darkgreen}{rgb}{0,0.45,0}
\def\sevdash{\scalebox{0.7}{\rotatebox[origin=c]{225}{$\bot$}}}
\def\swvdash{\scalebox{0.7}{\rotatebox[origin=c]{135}{$\bot$}}}
\def\ul{\underline}
\newcommand{\Hom}{{\sf Hom}}
\newcommand{\End}{{\sf End}}
\newcommand{\Mod}{{\sf Mod}}
\newcommand{\Mat}{{\sf Mat}}
\newcommand{\PMod}{{\sf PMod}}
\newcommand{\Vect}{{\sf Vect}}
\newcommand{\Alg}{{\sf Alg}}
\newcommand{\TCoalg}{{\sf TCoalg}}
\def\ot{\otimes}
\def\hot{\,\hat{\otimes}\,}
\def\CC{{\mathbb C}}
\def\NN{{\mathbb N}}
\newcommand{\Cc}{\mathcal{C}}
\newcommand{\Oo}{\mathcal{O}}
\newcommand{\selabel}[1]{\label{se:#1}}
\newcommand{\seref}[1]{Section~\ref{se:#1}}
\newcommand{\N}{{\mathbb N}}
\newcommand{\Z}{{\mathbb Z}}
\newcommand{\C}{{\mathbb C}}
\newcommand{\G}{\mathbb{G}}
\newcommand{\Grp}{\mathsf{Grp}}
\newcommand{\Set}{\mathsf{Set}}
\newcommand{\Mon}{\mathsf{Mon}}
\newcommand{\Nat}{\mathsf{Nat}}
\newcommand{\OG}{\mathcal{O}(G)}
\newtheorem{prop}{Proposition}[section]
\newtheorem{proposition}[prop]{Proposition}
\newtheorem{lemma}[prop]{Lemma} 
\newtheorem{corollary}[prop]{Corollary} 
\newtheorem{thm}[prop]{Theorem}
\newtheorem{theorem}[prop]{Theorem}
\newtheorem{conjecture}[prop]{Conjecture}
\theoremstyle{definition}
\newtheorem{definition}[prop]{Definition}
\newtheorem{example}[prop]{Example}
\newtheorem{examples}[prop]{Examples}
\newtheorem{remark}[prop]{Remark}
\newtheorem{remarks}[prop]{Remarks}
\newcommand{\benu}{\begin{enumerate}}
\newcommand{\enu}{\end{enumerate}}
\newcommand{\beqna}{\begin{eqnarray}}
\newcommand{\eqna}{\end{eqnarray}}
\newcommand{\beqnast}{\begin{eqnarray*}}
\newcommand{\eqnast}{\end{eqnarray*}}
\newcommand{\beqn}{\begin{equation}}
\newcommand{\eqn}{\end{equation}}
\newcommand{\beqnst}{\begin{equation*}}
\newcommand{\eqnst}{\end{equation*}}
\newcommand{\hotimes}{\,\hat{\otimes}\,}
\renewcommand{\hot}{\,\hat{\otimes}\,}
\newcommand{\bema}{\left ( \begin{array}}
\newcommand{\ema}{\end{array} \right )}
\newcommand{\TVS}{\mathsf{TVS}}
\newcommand{\CTVS}{\mathsf{CTVS}}
\begin{document}

\title{A comonadicity theorem for partial comodules}

\author[E. Batista]{Eliezer Batista}
\address{Departamento de Matem\'atica, Universidade Federal de Santa Catarina, Brazil}
\email{ebatista@mtm.ufsc.br}
\author[W. Hautekiet]{William Hautekiet}
\address{D\'epartement de Math\'ematiques, Universit\'e Libre de Bruxelles, Belgium}
\email{william.hautekiet@ulb.be}
\author[J. Vercruysse]{Joost Vercruysse}
\address{D\'epartement de Math\'ematiques, Universit\'e Libre de Bruxelles, Belgium}
\email{joost.vercruysse@ulb.be}
\thanks{\\ {\bf 2020 Mathematics Subject Classification}: Primary 16T05, 16T15, 18C20; Secondary 16S40, 20G05, 20G42.\\   {\bf Key words and phrases:} partial corepresentation, partial comodules, comondicity, topological coalgebra} 

\flushbottom

\begin{abstract} 
We show that the category of partial comodules over a Hopf algebra \(H\) is comonadic over \(\Vect_k\) and provide an explicit construction of this comonad using topological vector spaces. The case when \(H\) is finite dimensional is treated in detail. 
A study of partial representations of linear algebraic groups is initiated; we show that a connected linear algebraic group does not admit partiality. 
\end{abstract}

\maketitle

\tableofcontents

\section*{Introduction}

Over the last two decades, partial representations of groups \cite{doku} and Hopf algebras \cite{ABV} have proved to provide a valuable enrichment to usual representation theory. Several examples are known of groups that have equivalent categories of representations, whereas their partial representation categories are no longer equivalent. This shows that the category of partial representations is indeed a finer invariant than the usual representation category and hence could be a useful tool, for example, in the study of isocategorical groups \cite{isocat1, isocat2}. 

Intuitively, a partial representation of a group $G$ on a vector space $V$, associates to each element $g\in G$ not an automorphism of $V$ (as would be the case for a usual representation) but only an endomorphism, which however restricts to an isomorphism between the image of the action under $g^{-1}$ and the image of the action under $g$ itself. As such, one can say that $G$ acts on $V$ by `partial automorphisms'. Natural examples are obtained by restricting usual representations to (suitable) subspaces. Moreover, by the so-called globalization or dilation, one can show that any partial representation of a group \cite{Aba, DHSV} or a Hopf algebra \cite{ABV2} is obtained by restriction of a global one. Similar globalization results had been obtained earlier for, among others, partial actions of groups \cite{DokExe} and Hopf algebras \cite{AB} on algebras. This has lead to a lot of interest in the theory of partial actions and representations over the last decade, see the surveys \cite{Dok:survey, Bat} and the references therein.

Another crucial point in the theory of partial representations, is that these can be identified with usual modules over a suitably constructed algebra. In case of a (finite) group $G$, this algebra, which we denote as $k_{par}G$, is a groupoid algebra \cite{doku} and hence a weak Hopf algebra. In case of a Hopf algebra $H$, this algebra, which we denote as $H_{par}$, has the structure of a Hopf algebroid, not necessarily associated with a weak Hopf algebra \cite{ABV}.

As is well-known, representations of an algebraic group $G$ are described not as modules over the group algebra $kG$, but rather in terms of comodules over the coordinate algebra of $G$. The fundamental theorem of comodules states that any comodule equals the sum of its finite dimensional subcomodules. This result is at the heart of Tannaka duality \cite{Deligne}, which allows the reconstruction of an algebraic group from its monoidal category of finite dimensional representations. 

In view of this, in order to make partial representation theory applicable to algebraic groups, it is indispensable to consider partial comodules or corepresentations over a Hopf algebra, notions which were recently introduced in \cite{coreps}. A globalization theorem for partial comodules has been obtained recently in \cite{SV3}, within the more general framework of so-called `geometric partial comodules' \cite{HV}.
In spite of this, in \cite{coreps} the remarkable observation was made that partial comodules do not share the intrinsic finiteness observed in the usual theory of comodules over a coalgebra because, in general, they do not satisfy the fundamental theorem of comodules. An explicit example of a Hopf algebra that admits partial comodules that are different from the sum of their finite dimensional subcomodules is Sweedler's 4-dimensional Hopf algebra (see \cref{ex:irregular}). A consequence of this observation is that, for a given Hopf algebra $H$, one cannot guarantee the existence of a coalgebra $C$ such that the category of partial $H$-comodules is equivalent to the category of $C$-comodules (see \cref{prop:H4}). Those Hopf algebras admitting the existence of such a coalgebra are called regular. 
The existence of non-regular Hopf algebras, such as Sweedler's 4-dimensional Hopf algebra, shows that the theory of partial corepresentations is more involved than the theory of usual comodules and not a formal dualization of partial representation theory.

The first aim of this work is to show that, even if in general the category $\PMod^H$ of partial comodules over a Hopf algebra $H$ is not equivalent to a category of comodules over a coalgebra, it {\em is} equivalent to the Eilenberg-Moore category of a suitably constructed {\em comonad} $\C$ on the category of vector spaces. In other words, we show in \cref{se:comodulescomonadic} (more precisely in \cref{th:comonad}) that the forgetful functor from the category of partial comodules to the category of vector spaces is comonadic. 
Our proof is based on pure categorical reasoning (combining the  Special Adjoint Functor Theorem with Beck's precise (co)monadicity theorem) after analysing some important properties of the category of partial comodules over a Hopf algebra, where we show that this is a Grothendieck category whose colimits and finite limits can be computed in the underlying category of vector spaces (see \cref{prop:parcomcomplete} and \cref{th:grothendieck}).

It follows that a Hopf algebra $H$ is regular (in the sense explained above), if and only if the comonad $\C$ is of the form $-\ot C$ for some coalgebra $C$. Based on this, we study regularity in \cref{se:regular}, where we show in particular that a finite dimensional Hopf algebra $H$ is regular if and only if the Hopf algebroid $(H^*)_{par}$ characterizing the partial representations, is also finite dimensional (see \cref{le:Hfinreg}). This allows us to provide several classes of regular Hopf algebras, as well as a natural example of an infinite dimensional irregular Hopf algebra obtained from the universal quantum enveloping algebra \(U_q(\mathfrak{sl}_2)\) (see \cref{ex:quantum}).

\cref{se:algebriacgroups} is devoted to the theory of partial comodules over the coordinate algebra $\OG$ of a linear algebraic group $G$. Viewing $G$ as a representative functor $\mathbb{G}:\mathsf{Alg}_k \rightarrow \mathsf{Grp}$, we observe (see \cref{le:parrepalg}) that partial representations of $G$ on a vector space $V$, seen as natural transformations from $\mathbb{G}$ to the functor $\mathsf{End}_V\cong \Hom_k(V,V\ot -):\Alg_k\to \Grp$, are in one-to-one correspondence with partial comodules over the Hopf algebra $\OG$. Moreover, if $G$ is a connected group then all the partial comodules over $\OG$ are global, see \cref{th:connectedgroup}. This is a consequence of the non-degenerate dual pairing between the Hopf algebra $\OG$ and the universal enveloping algebra $U(\mathfrak{g})$ of the Lie algebra $\mathfrak{g}$ associated to the group $G$. By this duality, partial comodules over $\OG$ correspond to partial modules over $U(\mathfrak{g})$. As we already know that there are no partial  $U(\mathfrak{g})$-modules other than the global ones \cite{ABV}, the result follows. In \cref{subs:trivial} we construct some induced partial comodules of $\OG$ from global comodules of $\mathcal{O}(G^\circ)$, where $G^\circ$ is the connected component of the identity in $G$, using partial representations of the (finite) quotient group $G/G^\circ$. More specifically, we describe the structure of partial $\mathcal{O}(\mathsf{O}(2))$-comodules (\cref{se:orthogonal}) and present an example based on a group of monomial $3\times 3$-matrices (\cref{se:monomial}).

Finally, in \cref{se:explicit} we construct explicitly the right adjoint $R$ to the forgetful functor $U:\PMod^H \to \Vect_k$.
As a right adjoint of a forgetful functor, $R$ is intuitively understood as a {\em cofree construction}. Our strategy is to imitate, in a comodule-setting, the construction of cofree coalgebras and universal coalgebras as subcoalgebras of these \cite{BL,Hazel}. This is achieved by using some techniques from topological vector spaces and topological coalgebras \cite{takeuchi}, which we review in \cref{se:tvs}.
Explicitly, we associate to each vector space $V$ the largest subspace $R(V)$ of Takeuchi's topological cofree comodule $V\hotimes \hat{C}(H) =\prod_{n\geq 0} V\otimes H^{\otimes n}$, which inherits the structure of a partial $H$-comodule (see \cref{se:constructionR}).
Our explicit construction of the functor $R$ then implies also a description of the comonad $\mathbb{C} =UR:\Vect_k \rightarrow \Vect_k$, whose Eilenberg-Moore category coincides with the category of partial $H$-comodules by the results obtained in \cref{se:comodulescomonadic}. In the case were $H$ is a finite dimensional Hopf algebra, we find in \cref{pr:finitedimensional} that the functor underlying the comonad $\CC$ is representable, as it is naturally isomorphic to $\Hom_k ((H^*)_{par} , -)$, where $(H^*)_{par}$ is the ``partial'' Hopf algebroid associated to the dual Hopf algebra $H^*$. In general however, the comonad $\CC$ cannot be induced by a coalgebra, nor by an algebra, which shows that the comonadic approach presented in this paper is unavoidable.

We end this paper with an outlook to some future directions for research. 

\subsection*{Notational conventions}
In this article, unless stated explicitly otherwise, \(k\) is a field and \(H\) a Hopf algebra over \(k\), whose multiplication is denoted by \(\mu : H \otimes H \to H\), unit by \(u : k \to H\), comultiplication by \(\Delta : H \to H \otimes H\), counit by \(\epsilon : H \to k\) and antipode by \(S : H \to H\). We will make use of the Sweedler notation for the comultiplication
\[\Delta(h) = h_{(1)} \otimes h_{(2)}\]
for \(h \in H\).

When \(A\) is an object in a category \(\mathcal{C}\), the identity morphism of \(A\) is also denoted by \(A\). Composition of morphisms \(f : A \to B\) and \(g : B \to C\) is denoted \(g\circ f=gf : A \to C\). 

For a linear map \(\rho : M \to M \otimes H,\) we write \(\rho^0 = M\). Denote by induction $$\rho^n:= (\rho \ot H^{\ot n-1}) \rho^{n-1}.$$ Also we adopt the Sweedler notation to write $\rho(m)=m^{(0)}\ot m^{(1)}$.

\section{The category of partial comodules}
\label{se:comodulescomonadic}

In this section we recall the definitions and some important results about partial modules and comodules over a Hopf algebra. Next, we will prove that the category of partial comodules
\(\PMod^H\) has very similar properties as the category of (global) comodules, in particular both are Grothendieck categories. Nevertheless, we will also show that there exist Hopf algebras for which the category of partial comodules cannot be equivalent to the category of comodules for over any coalgebra. However, we show that the category of partial comodules is comonadic over $\Vect_k$.

\subsection{Partial modules and comodules}
\label{se:generalities}

Let us recall from \cite{ABV}, the definition of a partial module over a Hopf algebra $H$, as well as the result which states that the category of partial modules over \(H\) is equivalent to the category of (usual, global) modules over the so-called partial ``Hopf'' algebra \(H_{par}\).

\begin{definition}
    A \textit{left partial module} over \(H\) is a couple \((M, \pi)\) where \(M\) is a \(k\)-vector space and \[\pi: H \otimes M \to M : h \otimes m \mapsto h \cdot m\] a linear map such that
    \begin{enumerate}[(PM1)]
        \item \(1_H \cdot m = m\) for all \(m \in M\);
        \item \(h \cdot (k_{(1)} \cdot (S(k_{(2)}) \cdot m)) = hk_{(1)} \cdot (S(k_{(2)}) \cdot m)\) for all \(h, k \in H, m \in M\);
        \item \(h_{(1)} \cdot (S(h_{(2)}) \cdot (k \cdot m)) = h_{(1)} \cdot (S(h_{(2)})k \cdot m)\)  for all \(h, k \in H, m \in M\). 
    \end{enumerate}
    A morphism between partial modules \((M, \pi)\) and \((M', \pi')\) is a linear map \(f : M \to M'\) such that \(f  \pi = \pi' (H \otimes f)\). The category of left partial modules over \(H\) is denoted by \({_H} \PMod\).
\end{definition}

\begin{theorem}[{\cite[Corolloray 5.3]{ABV}}]\label{Hpar}
The category \({_H} \PMod \) of left partial \(H\)-modules is isomorphic to the category of left \(H_{par}\)-modules \({_{H_{par}}} \Mod\), where \(H_{par}\) is defined as the quotient of the tensor algebra over \(H\) 
    \[T(H) = \langle [h] \mid h \in H \rangle\]
    by the ideal generated by the elements
    \begin{gather*}
        1_{T(H)} - [1_H], \\
        [h][k_{(1)}][S(k_{(2)})] - [hk_{(1)}][S(k_{(2)})],\\
        [h_{(1)}][S(h_{(2)})][k] - [h_{(1)}][S(h_{(2)})k]
    \end{gather*}
for \(h, k \in H\).
\end{theorem}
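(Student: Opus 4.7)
The plan is to construct mutually inverse functors between \({}_H\PMod\) and \({}_{H_{par}}\Mod\) using the universal property of the tensor algebra \(T(H)\) on the \(k\)-vector space \(H\). The crucial observation is that the generators \([h]\) depend \(k\)-linearly on \(h\), so the presentation of \(H_{par}\) is essentially designed to match the axioms (PM1)--(PM3) on the nose.

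First, given a partial module \((M,\pi)\), define \(\psi : H \to \End_k(M)\) by \(\psi(h)(m) = \pi(h \otimes m) = h \cdot m\). Since \(\psi\) is \(k\)-linear, the universal property of the tensor algebra extends it uniquely to a \(k\)-algebra map \(\tilde{\psi} : T(H) \to \End_k(M)\). I then check that \(\tilde{\psi}\) sends each of the three families of generators of the defining ideal to zero: the relation \(1_{T(H)} - [1_H]\) vanishes by (PM1), the relation \([h][k_{(1)}][S(k_{(2)})] - [hk_{(1)}][S(k_{(2)})]\) vanishes by (PM2), and \([h_{(1)}][S(h_{(2)})][k] - [h_{(1)}][S(h_{(2)})k]\) vanishes by (PM3). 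Hence \(\tilde{\psi}\) factors through \(H_{par}\) and equips \(M\) with a left \(H_{par}\)-module structure.

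Conversely, given a left \(H_{par}\)-module \(M\), define \(\pi : H \otimes M \to M\) by \(\pi(h \otimes m) = [h]\cdot m\). This is \(k\)-linear in \(h\) because \(h \mapsto [h]\) is \(k\)-linear (the construction of \(T(H)\) is applied to \(H\) as a \(k\)-space). Axioms (PM1)--(PM3) are then read off directly from the three families of relations defining \(H_{par}\), applied to \(m\). One checks that the two assignments are inverse on objects (both amount to the identification \(h \cdot m \leftrightarrow [h]\cdot m\)) and that a \(k\)-linear map \(f : M \to M'\) is a morphism of partial modules if and only if it commutes with the action of every \([h]\), which by multiplicativity and \(k\)-linearity is equivalent to being \(H_{par}\)-linear.

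The main obstacle, such as it is, lies not in any deep computation but in ensuring that the presentation of \(H_{par}\) is correctly matched to the axioms: one must be careful that the quotient is taken by an \emph{ideal} (so compatibility with multiplication on both sides comes for free) and that the generating relations are interpreted \(k\)-linearly in the Sweedler variables \(h,k\). Once this bookkeeping is in place, the equivalence (in fact isomorphism) of categories is immediate, since the two functors are visibly identities on underlying vector spaces and on morphisms. This matches the argument of \cite[Corollary~5.3]{ABV}, which we simply recall here.
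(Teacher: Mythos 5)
Your proof is correct and is exactly the standard argument: the paper itself gives no proof of this theorem, merely citing \cite[Corollary 5.3]{ABV}, and your construction of the mutually inverse functors via the universal property of $T(H)$ is precisely the argument used there. The one point worth being explicit about --- that the generators $[h]$ depend $k$-linearly on $h$ so that both the relations and the induced action make sense --- is handled correctly in your write-up.
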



Partial comodules, the dual notion to partial modules, were introduced in \cite{coreps}.
\begin{definition}
	An (algebraic) right partial \(H\)-comodule is a \(k\)-vector space \(M\) endowed with a \(k\)-linear map \(\rho : M \to M \otimes H,\) satisfying
	\begin{enumerate}[(PCM1)]
		\item \label{PCM1} $(M\otimes \epsilon) \rho =M$;
		\item  \label{PCM2} $(M\otimes H \otimes \mu)(M\otimes H \otimes H \otimes S)(M\otimes \Delta \otimes H) (\rho \otimes H )\rho =(M\otimes H \otimes \mu)(M\otimes H \otimes H \otimes S)$\\
		$(\rho \otimes H \otimes H)(\rho \otimes H)\rho$;
		\item \label{PCM3} $(M\otimes \mu \otimes H)(M\otimes H \otimes S \otimes H) (M\otimes H \otimes \Delta)(\rho \otimes H)\rho =(M\otimes \mu \otimes H)(M\otimes H \otimes S \otimes H)$\\
		$(\rho \otimes H \otimes H)(\rho \otimes H)\rho$.
	\end{enumerate}
	The first axiom tells that \(\rho\) is counital, while \ref{PCM2} and \ref{PCM3} express the \textit{partial coassociativity}. 
	As shown in \cite[Lemma 3.3]{coreps}, axioms \ref{PCM2} and \ref{PCM3} are equivalent to
	\begin{enumerate}[(PCM1)]
	\setcounter{enumi}{3}
	    \item  \label{PCM4} $(M\otimes H\otimes \mu)(M\otimes H \otimes S \otimes H)(M\otimes \Delta \otimes  H)(\rho \otimes H)\rho=(M\otimes H\otimes \mu)(M\otimes H \otimes S \otimes H)$\\
	    $(\rho \otimes H \otimes H)(\rho \otimes H)\rho$;
	    \item \label{PCM5} $(M\otimes \mu \otimes H)(M\otimes S\otimes H \otimes H)(M\otimes H \otimes \Delta)(\rho \otimes H)\rho =(M\otimes \mu \otimes H)(M\otimes S\otimes H \otimes H)$\\
	    $(\rho \otimes H \otimes H)(\rho \otimes H)\rho$.
	\end{enumerate}

    A morphism between partial comodules \((M, \rho)\) and \((N, \sigma)\) is a linear map \(f : M \to N\) such that \((f \otimes H) \rho = \sigma f\). 
The category of right partial comodules over \(H\) is denoted as \(\mathsf{PMod}^H\). 
\end{definition}

Partial comodules as defined above, should not be confused with the more general notion of `geometric partial comodule' as introduced in \cite{HV}. In \cite{SV3}, it was shown that partial comodules in the sense above are a particular kind of geometric partial comodules.

Partial comodules and partial modules are related by the following result.

\begin{thm}[{\cite[Theorem 4.14]{coreps}}]
\label{thm:duality}
Let $H$ be a Hopf algebra over a field such that $H^{\circ}$ is dense in $H^*$, $M$ a $k$-vector space and $\rho:M\rightarrow M\otimes H$ a linear map. then the following statements are equivalent:
\begin{enumerate}
    \item[(i)] $(M,\rho)$ is a right partial $H$-comodule.
    \item[(ii)] $(M,\cdot )$ is a left partial $H^\circ$-module, where $\cdot \! \! : H^\circ \otimes M\rightarrow M$ is defined by $h^\ast \cdot m=(M\otimes h^\ast ) \rho (m)$.
\end{enumerate}
\end{thm}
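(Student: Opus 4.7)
The plan is to transfer the axioms between the two settings by evaluating functionals from $H^\circ$ on the $H$-factors of $\rho$ and its iterates, using the density hypothesis only to reverse this process. Recall that the action in (ii) is $h^* \cdot m = m^{(0)} h^*(m^{(1)})$. The counital axiom (PCM1) literally reads $(M \otimes \epsilon)\rho = M$, which is the partial-module axiom (PM1) applied to the counit $\epsilon \in H^\circ$; since $\epsilon$ is always the unit of the convolution algebra $H^*$, this first equivalence is automatic and requires no density hypothesis.

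For the remaining equivalences (PCM2)$\Leftrightarrow$(PM2) and (PCM3)$\Leftrightarrow$(PM3), the approach is to compute both sides of (PM2) and (PM3) at a fixed $m \in M$ by unfolding the action $\cdot$, the convolution multiplication on $H^\circ$ (which pairs with $\Delta$ of $H$), the comultiplication of $H^\circ$ (which pairs with $\mu$ of $H$), and the antipode $\bar{S}$ of $H^\circ$ (given by $\bar{S}(h^*)(h) = h^*(Sh)$). A direct Sweedler computation should show that the two sides of (PM2) at $m$ are precisely the images under $M \otimes h^* \otimes k^*$ of the two sides of (PCM2) at $m$, and analogously for (PM3) and (PCM3). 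This immediately yields the implication (i)$\Rightarrow$(ii) by pairing the tensor identities (PCM2), (PCM3) with arbitrary $h^*, k^* \in H^\circ$.

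The reverse direction (ii)$\Rightarrow$(i) is where the density hypothesis enters. Weak-$*$ density of $H^\circ$ in $H^*$ is equivalent to the restriction map $H^\circ \to V^*$ being surjective for every finite-dimensional subspace $V \subseteq H$. A standard two-step linear-independence argument then shows that the elementary tensors $M \otimes h^* \otimes k^*$ with $h^*, k^* \in H^\circ$ jointly separate points of $M \otimes H \otimes H$, which is the ambient space of both sides of (PCM2) and (PCM3). Consequently, the families of scalar identities (PM2) and (PM3) parametrised by $h^*, k^* \in H^\circ$ force the tensor identities (PCM2) and (PCM3). I expect the main technical difficulty to be the Sweedler bookkeeping in the key computation, in particular verifying that the dualised Hopf operations $\bar\Delta$, $\bar\mu$ and $\bar S$ on $H^\circ$ recombine with the iterated coactions $\rho^n$ to reproduce exactly the pattern of $\Delta$, $\mu$ and $S$ appearing in (PCM2) and (PCM3); once this identification is set up correctly, the equivalence falls out by inspection.
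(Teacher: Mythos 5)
Your plan is correct and is essentially the argument of the cited source: the paper itself only quotes this as \cite[Theorem 4.14]{coreps} without reproducing a proof, and the proof there proceeds exactly as you describe, by pairing the tensor identities (PCM2)--(PCM3) with $M\otimes h^*\otimes k^*$ for $h^*,k^*\in H^\circ$ (which recovers (PM2)--(PM3) verbatim, with the two sides swapped) and using density of $H^\circ$ in $H^*$ in the finite topology to separate points of $M\otimes H\otimes H$ for the converse. The Sweedler bookkeeping you flag as the main risk does work out, and the counit step indeed needs no density.
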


\begin{example}
Consider the group algebra \(kC_3 = \langle 1, g, g^2\rangle\). If $k$ is a field whose characteristic is different from $3$, then it is well-known that $kC_3$ is semi-simple by Maschke's theorem. If moreover $k$ contains a primitive third root of unity $\xi$, then $C_3$ has up to isomorphism exactly $3$ irreducible representations, that are all $1$-dimensional and determined by the morphisms $\chi_i:G\to k$, $i=1,2,3$ given by
$$\chi_1(g)=1, \quad \chi_2(g)=\xi, \quad \chi_3(g)=\xi^2.$$
As a consequence, $kC_3\cong k^3$ as $k$-algebras.

The partial Hopf algebra \(kC_{3, par}\) on the other hand is 8-dimensional and has a basis
\[\{[1], [g], [g]^2, [g^2], [g^2]^2, [g][g^2], [g^2][g], [g]^3\}.\]
The subalgebra \(A\) is generated by \([1], [g][g^2]\) and \([g^2][g]\) and is three-dimensional. As in the global case, if \(\mathrm{char}(k) \neq 3,\) then \(kC_{3, par}\) is again a semisimple algebra, and if $k$ contains a primitive third root of unity, $kC_{3,par}$ is moreover isomorphic to \(k^4 \times {\sf Mat}_{2\times 2}(k)\) \cite{doku}. In other words, there are 4 isomorphism classes of one-dimensional irreducible partial modules over $kC_3$, and 1 two-dimensional \cite{DZ}. The one-dimensional irreducible partial representations are the three global ones given above, and the following partial representation $\chi_4:C_3\to k$ given by
$$\chi_4(1)=1, \chi_4(g)=\chi_4(g^2)=0.$$
The two-dimensional irreducible partial representation $\chi_5:C_3\to \Mat_{2\times 2}(k)$ is given by
\[
\chi_5(1)=\left(\begin{array}{cc}1 & 0 \\ 0 & 1 \end{array}\right) \quad
\chi_5(g)=\left(\begin{array}{cc}0 & 0 \\ 1 & 0 \end{array}\right) \quad
\chi_5(g^2)=\left(\begin{array}{cc}0 & 1 \\ 0 & 0 \end{array}\right) 
\]

As for any finite dimensional Hopf algebra $H$, the category of partial comodules over $H$ is isomorphic to the category of partial modules over $H^*$, and vice versa. If the field $k$ has a primitive third root of unity, then, from the partial modules over $kC_3$, one can construct the irreducible partial comodules over $(kC_3)^*$. But \(kC_3 \cong (kC_3)^*\) as Hopf algebras by means of the isomorphism
\[
\phi (1) =p_1 +p_g +p_{g^2}, \qquad \phi (g) =p_1 +\xi p_g +\xi^2 p_{g^2} , \qquad \phi (g^2) =p_1 +\xi^2 p_g +\xi p_{g^2} ,
\]
where we denoted the basis of \((kC_3)^*\) dual to \(\{1, g, g^2\}\) by $\{ p_1 , p_g , p_{g^2} \}$.


This way, we find the irreducible partial comodules over $kC_3$, that is, partially $C_3$-graded vector spaces:
\begin{itemize}
    \item Three \(1\)-dimensional global $kC_3$ comodules: $(ke_1, \rho_1 )$, $(ke_1, {\rho}_2 )$ and $(ke_1,{\rho}_3 )$, given by
    \begin{eqnarray*}
    {\rho}_1 (e_1) & = & e_1 \otimes 1 ,\\
    {\rho}_2 (e_1) & = & e_1 \otimes g ,\\
    {\rho}_3 (e_1) & = & e_1 \otimes g^2 .
    \end{eqnarray*}
    \item One \(1\)-dimensional partial $kC_3$ comodule: $(ke_1 , {\rho}_4)$, given by
    \[
    {\rho}_4 (e_1)= \frac{1}{3} e_1 \otimes (1+g+g^2) .
    \]
    \item One \(2\)-dimensional partial $kC_3$ comodule $(k^2, {\rho}_5)$, given by
    \begin{eqnarray*}
    {\rho}_5 (e_1) & = & \frac{1}{3} e_1 \otimes (1+g+g^2) + \frac{1}{3} e_2 \otimes (1+\xi^2 g +\xi g^2) ,\\
    {\rho}_5 (e_2) & = & \frac{1}{3} e_1 \otimes (1+\xi g +\xi^2 g^2) +\frac{1}{3} e_2 \otimes (1+g+g^2) ,
    \end{eqnarray*}
    where $\{ e_1 , e_2 \} $ is the basis of the vector space $k^2$.
\end{itemize}


\end{example}



%

\subsection{\texorpdfstring{\(\PMod^H\)}{PMod} is a Grothendieck category and comonadic over \texorpdfstring{$\Vect_k$}{Vect}}
\label{Grothendieck}


\begin{proposition}\label{prop:parcomcomplete}
Let $H$ be a Hopf algebra over a field $k$.
The category $\PMod^H$ of partial $H$-comodules is complete and cocomplete and the forgetful functor $U:\PMod^H\to \Vect$ preserves and reflects (i.e. creates) all colimits and finite limits. Moreover, $\PMod^H$ is an abelian category. In particular, monomorphisms, epimorphisms and isomorphisms in $\PMod^H$ are respectively injecitve, surjective and bijective morphisms.
\end{proposition}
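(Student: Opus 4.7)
The plan is to transfer everything from $\Vect_k$ using the fact that $-\otimes H$ is very well behaved: it is a left adjoint (to $\Hom_k(H,-)$), so it preserves all colimits, and since $H$ is flat over the field $k$, it preserves finite limits, and the same then holds for every iterated tensor power $-\otimes H^{\otimes n}$.

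First I would construct finite limits. Given a finite diagram $D:\mathcal{I}\to \PMod^H$ with coaction maps $\rho_i : UDi\to UDi\otimes H$, let $(M, (p_i)_{i\in\mathcal{I}})$ be the limit of $UD$ in $\Vect_k$. Preservation of finite limits by $-\otimes H$ identifies $(M\otimes H,(p_i\otimes H))$ with $\lim (UDi\otimes H)$; because each morphism of $D$ is a partial comodule map, the family $\rho_i\circ p_i$ is a cone, so it factors uniquely as $(p_i\otimes H)\circ\rho$ for some $\rho:M\to M\otimes H$. For colimits the same argument works with arrows reversed, using that $-\otimes H$ preserves all colimits. In both cases the partial comodule axioms \ref{PCM1}--\ref{PCM3} transfer: each axiom is an equality of two maps $M\to M\otimes H^{\otimes n}$ (respectively $UDi\to UDi\otimes H^{\otimes n}$), and composing the two sides with the jointly monic family $p_i\otimes H^{\otimes n}$ (respectively precomposing with the jointly epic coprojections) reduces the equation on $M$ to the corresponding equation on each $UDi$, which holds by assumption. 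This gives $(M,\rho)\in \PMod^H$, and the universal properties (uniqueness of induced morphisms out of/into $M$) follow from the universal properties in $\Vect_k$ together with the faithfulness of $U$. Thus $U$ creates finite limits and all colimits, and in particular preserves and reflects them.

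For the abelian structure, I would observe that finite products and finite coproducts in $\PMod^H$ exist by the above and both sit on the vector space biproduct, so they coincide; the zero object is the trivial partial comodule $0$. Hom-sets inherit a $k$-vector space structure from $\Vect_k$ (sums and scalar multiples of partial comodule morphisms are again partial comodule morphisms, directly from linearity of the axioms), making $\PMod^H$ additive. Kernels exist as equalizers of $(f,0)$ and cokernels as coequalizers of $(f,0)$; both are computed in $\Vect_k$ by the creation property above. Since in $\Vect_k$ every monomorphism is a kernel and every epimorphism is a cokernel, and since $U$ creates finite limits and colimits, the same holds in $\PMod^H$, so it is abelian.

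Finally, the characterization of monomorphisms, epimorphisms and isomorphisms: a morphism $f$ in $\PMod^H$ is mono iff its kernel is zero iff (by the previous paragraph, computing kernels in $\Vect_k$) $Uf$ is injective; dually for epimorphisms via cokernels and surjectivity; and an iso is mono plus epi in an abelian category, hence exactly a bijective partial comodule morphism (the inverse is automatically a partial comodule morphism because applying $-\otimes H$ to an iso of vector spaces yields an iso, and intertwining is preserved by inversion). The main technical obstacle is really the routine but notation-heavy verification that the axioms \ref{PCM1}--\ref{PCM3} pass through the (co)limit cone; here the crucial point is that, because each $H^{\otimes n}$ is flat, every $M\otimes H^{\otimes n}$ is simultaneously a finite limit and a colimit of the $UDi\otimes H^{\otimes n}$'s, so both sides of each axiom can be tested component-wise.
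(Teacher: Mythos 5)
Your proposal is correct and follows essentially the same route as the paper: compute the (co)limit in $\Vect_k$, use that $-\otimes H$ preserves finite limits (flatness) and all colimits (left adjoint) to induce the coaction via the universal property, and transfer the axioms \ref{PCM1}--\ref{PCM3} componentwise through the jointly monic/epic structure maps. Your extra detail on the additive structure and the mono/epi/iso characterization only spells out what the paper declares to follow immediately from kernels and cokernels being computed in $\Vect_k$.
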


\begin{proof}
	The proofs of these facts are standard. For the interested reader, let us give an explicit proof for the existence of finite limits. Let \(F : J \to \PMod^H\) be a diagram of shape \(J\), where \(J\) is some finite index category. Let \(UF\) the corresponding diagram in \(\Vect_k\) and let \((M, \varphi_X)\) be its limit. Then \((M \otimes H, \varphi_X \otimes H)\) is the limit of the diagram \(UF(-) \otimes H\) because the functor \(- \otimes H\) preserves finite limits, and the collection of linear maps \(\rho_X : X \to X \otimes H\) for all objects \(X\) in \(F\) defines a morphism of diagrams \(UF \to UF(-) \otimes H\). Indeed, every morphism in \(F\) is a morphism of partial comodules, hence it commutes with the partial coactions. Considering the linear maps \(\rho_X \varphi_X : M \to X \otimes H\) for every \(X\) in \(F\), the universal property of \(M \otimes H\) gives a unique linear map \(\rho_M : M \to M \otimes H\) such that for every object \(X\) in \(F\), \(\rho_X \varphi_X = (\varphi_X \otimes H)\rho_M\).
	
	The maps \(X \otimes \epsilon : X \otimes H \to X\) induce a morphism \(UF(-) \otimes H \to UF\) and hence we have a unique morphism \(g : M \otimes H \to M\) such that \(\varphi_X  g = (X \otimes \epsilon) (\varphi_X \otimes H)\). Clearly \(g = M \otimes \epsilon\) and since for all \(X,\) \((X \otimes \epsilon) \rho_X = X\), we find that \((M \otimes \epsilon)  \rho_M= M\) (indeed, the identity on \(M\) is the unique linear map \(f: M \to M\) such that \(\varphi_X f = \varphi_X\) for all \(X\)). Hence \(\rho_M\) is counital. 
	\[\begin{tikzcd}
		M \arrow{r}{\rho_M} \arrow[bend left = 30]{rrr}{M} \arrow{d}{\varphi_X} & M \otimes H \arrow[dashed]{rr}{g = M \otimes \epsilon}\arrow{d}{\varphi_X \otimes H} && M \arrow{d}{\varphi_X}\\
		X \arrow{r}{\rho_X} \arrow[bend right = 30, swap]{rrr}{X} & X \otimes H \arrow{rr}{X \otimes \epsilon} && X
	\end{tikzcd}\]
	The partial coassociativity is proved in a similar way (consider also the diagrams \(F(-) \otimes H \otimes H\) and \(F(-) \otimes H \otimes H \otimes H\)). We can conclude that \((M, \rho_M)\) is a partial comodule.

	This partial comodule is indeed the limit of the diagram \(F\) in \(\PMod^H,\) for if \((N, \psi_X)\) is a cone on \(F,\) there exists a unique linear map \(h : N \to M\) such that \(\varphi_X h = \psi_X\) for all \(X\). Then \((U(N) \otimes H, \psi_X \otimes H)\) is a cone on \(UF(-) \otimes H\), and \(h \otimes H\) is the unique linear map \(N \otimes H \to M \otimes H\) from the universal property of \(M \otimes H\). Since both \((N, (\varphi_X \otimes H) (h \otimes H) \rho_N)\) and \((N, (\varphi_X \otimes H) \rho_M h)\) are cones on \(UF(-) \otimes H,\) it follows by universality of \(M \otimes H\) that \((h \otimes H)  \rho_N = h \rho_M\). So \(h\) is a morphism of partial comodules and we have shown that \(U\) preserves and reflects finite limits.
	\[
	\begin{tikzcd}
		N \arrow{r}{\rho_N} \arrow[bend right = 45, swap]{dd}{\psi_X} \arrow[dashed]{d}{h} & N \otimes H \arrow[bend left = 45, shift left = 2 ex]{dd}{\psi_X \otimes H} \arrow[dashed, swap]{d}{h \otimes H}\\
		M \arrow{r}{\rho_N} \arrow{d}{\varphi_X} & M \otimes H \arrow[swap]{d}{\varphi_X \otimes H}\\
		X \arrow{r}{\rho_N} & X \otimes H
	\end{tikzcd}
	\]
	
	The proof that \(U\) preserves and reflects  colimits is carried out in a similar way (remark that the functor \(- \otimes H : \Vect_k \to \Vect_k\) preserves all colimits since it is a left adjoint). Now it follows immediately that \(\PMod^H\) is abelian, since binary biproducts, kernels and cokernels are computed as in \(\Vect_k\).
	
    Arbitrary limits in \(\PMod^H\) will be discussed in \cref{rm:products}.
\end{proof}

\begin{lemma}
	\label{le:countabledim}
	Let \(H\) be a Hopf algebra, \((M, \rho)\) a partial comodule over \(H\) and \(m \in M\). Then \(m\) is contained in a partial subcomodule of \(M\) of countable dimension. 
\end{lemma}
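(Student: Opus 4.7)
The plan is to build, starting from $m$, an ascending chain $V_0 \subseteq V_1 \subseteq \cdots$ of finite-dimensional subspaces of $M$, each step closing under $\rho$ just enough so that the union $V := \bigcup_{n \geq 0} V_n$ is $\rho$-invariant. Since a countable union of finite-dimensional subspaces is countable-dimensional, this will give the desired $V$ provided I can verify that $(V, \rho|_V)$ inherits the partial comodule structure.

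The key tool is the following. For each $v \in M$, let $P(v) \subseteq M$ be the smallest subspace $W$ with $\rho(v) \in W \otimes H$; concretely, fixing a basis $\{h_\alpha\}$ of $H$ and writing $\rho(v) = \sum_\alpha v_\alpha \otimes h_\alpha$ (finite sum), one has $P(v) = \operatorname{span}\{v_\alpha\}$, a finite-dimensional subspace that is independent of the basis chosen. The assignment $v \mapsto P(v)$ is sub-linear in the sense that $P(v+w) \subseteq P(v) + P(w)$ and $P(\lambda v) \subseteq P(v)$. Consequently, if $V_n$ is finite-dimensional with basis $v_1, \dots, v_k$, then $\sum_{v \in V_n} P(v) = P(v_1) + \cdots + P(v_k)$ is again finite-dimensional.

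I would then set $V_0 := km$ and inductively $V_{n+1} := V_n + \sum_{v \in V_n} P(v)$. By construction each $V_n$ is finite-dimensional and $\rho(V_n) \subseteq V_{n+1} \otimes H$, so $V := \bigcup_n V_n$ is countable-dimensional and satisfies $\rho(V) \subseteq V \otimes H$. Hence $\rho$ restricts to a well-defined linear map $\rho_V : V \to V \otimes H$.

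It remains to check that $(V, \rho_V)$ satisfies (PCM1)--(PCM3). I would argue this formally: each axiom is an equality of two compositions $M \to M \otimes H^{\otimes n}$ built from $\rho$, $\Delta$, $\mu$, $S$ and identity maps on $M$ and $H$. Since $V$ is $\rho$-invariant, all these compositions restrict to maps $V \to V \otimes H^{\otimes n}$; since the inclusion $V \otimes H^{\otimes n} \hookrightarrow M \otimes H^{\otimes n}$ is injective, the equalities on $V$ follow from those on $M$. (Alternatively, this is a direct consequence of \cref{prop:parcomcomplete}, since partial subcomodules are detected by inclusions into $M$ and limits in $\PMod^H$ are computed in $\Vect_k$.) The only real obstacle is the balancing act in the inductive step, namely maintaining finite-dimensionality while closing under $\rho$; this is precisely what the sub-linearity of $P$ resolves, so the argument should go through without further difficulty.
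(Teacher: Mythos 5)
Your proof is correct and is essentially the paper's argument: the paper spans the components $m_\alpha$ of the iterated coactions $\rho^n(m)$ over all $n$, which yields exactly your $V=\bigcup_n V_n$, and likewise leaves the verification of (PCM1)--(PCM3) to the (correct) observation that they restrict along a $\rho$-invariant subspace. Your inductive closure via the finite-dimensional spaces $P(v)$ is just a repackaging of the same construction, with the axiom check spelled out slightly more explicitly than in the paper.
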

\begin{proof}
	Let \(\{h_i \mid i \in I\}\) a basis for \(H\). Write for \(n \in \N\)
	\[\rho^n(m) = \sum_{\alpha \in I^n} m_\alpha  \otimes h_\alpha\]
	where \(m_\alpha \in M\) is zero for all but finitely many \(\alpha = (\alpha_1, \dots, \alpha_n)\) and \(h_\alpha = h_{\alpha_1} \otimes \cdots \otimes h_{\alpha_n}
 \in H^{\otimes n}\). Let \(N\) be the vector space generated by \(\{m_\alpha \mid \alpha \in I^n, n \in \N\}\) (which is a countable set). It is easy to see that \(N\) is a partial subcomodule. 
\end{proof}

\begin{lemma}
	\label{le:generator}
	The category of partial comodules over \(H\) has a generator.
\end{lemma}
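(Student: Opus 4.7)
The strategy is to use \leref{countabledim} to produce a generating \emph{set} of countable-dimensional partial comodules, and then take their coproduct to obtain a single generator, which is possible by the cocompleteness of $\PMod^H$ established in \prref{parcomcomplete}.

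First I would check the set-theoretic point that the collection of isomorphism classes of countable-dimensional partial $H$-comodules forms an honest set. Up to isomorphism of the underlying vector space, any countable-dimensional partial comodule has the form $(W,\rho)$ where $W$ is a fixed countable-dimensional space, say $W = k^{(\N)}$, or a subspace thereof, and $\rho$ is one of the set of linear maps $W\to W\otimes H$ satisfying the partial comodule axioms. Since $\Hom_k(W,W\otimes H)$ is a set, we obtain a set of representatives $\{(M_i,\rho_i)\}_{i\in I}$ of all isomorphism classes.

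Next, I set $G = \bigoplus_{i\in I} M_i$, which exists in $\PMod^H$ by \prref{parcomcomplete}, and claim $G$ is a generator. It suffices to show that the family $\{M_i\}_{i\in I}$ is a generating set, i.e.\ that for any two distinct parallel morphisms $f\neq g: X\to Y$ in $\PMod^H$, there is some index $i$ and a morphism $\varphi: M_i\to X$ with $f\varphi\neq g\varphi$. Indeed, pick $x\in X$ with $f(x)\neq g(x)$; by \leref{countabledim}, $x$ lies in a countable-dimensional partial subcomodule $N\subseteq X$. By construction there is $i\in I$ and an isomorphism $\psi: M_i\to N$ of partial comodules, and setting $\varphi: M_i\xrightarrow{\psi} N\hookrightarrow X$ gives $f\varphi(\psi^{-1}(x)) = f(x) \neq g(x) = g\varphi(\psi^{-1}(x))$. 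Passing to $G$ via the canonical injection $\iota_i: M_i\to G$, the universal property of the coproduct yields $h:G\to X$ extending $\varphi$ along $\iota_i$ (and zero on the other summands), so that $fh\iota_i = f\varphi\neq g\varphi = gh\iota_i$, hence $fh\neq gh$.

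There is no real obstacle here; the lemma reduces to \leref{countabledim} plus the standard trick of bundling a generating set into a coproduct. The only point requiring (minor) care is the set-theoretic verification that the iso classes of countable-dimensional partial comodules form a set, which as sketched above is routine.
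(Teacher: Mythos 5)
Your proposal is correct and follows essentially the same route as the paper: reduce to a generating set via \leref{countabledim}, observe that isomorphism classes of countable-dimensional partial comodules form a set because the possible structure maps live in $\Hom_k(W, W\otimes H)$, and take the direct sum using the cocompleteness from \prref{parcomcomplete}. The only difference is that you spell out the separation-of-morphisms verification that the paper leaves implicit, which is a welcome but routine addition.
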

\begin{proof}
	It suffices to show that \(\PMod^H\) has a set of generators, since then the direct sum of all objects in this set provides a generator. 
	By \cref{le:countabledim}, every partial comodule is the sum of its countable dimensional partial subcomodules. 
	For every countable dimension, there exists up to isomorphism a unique vector space. The isomorphism classes of partial comodule structures on this vector space form a set since they are a subset of \(\Hom_k(M, M \otimes H)\). Hence the collection of isomorphism classes of partial comodules of countable dimension is a set. 
\end{proof}


\begin{theorem}\label{th:grothendieck}
	\(\PMod^H\) is a Grothendieck category. 
\end{theorem}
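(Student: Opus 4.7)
The plan is to assemble the earlier results. Recall that a Grothendieck category is an abelian category admitting a generator and all small coproducts, and in which filtered colimits are exact (axiom AB5). From \cref{prop:parcomcomplete} we already know that $\PMod^H$ is abelian and cocomplete, and from \cref{le:generator} it admits a generator, so only the AB5 condition remains to be verified.

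For AB5, I would argue as follows. Let $(F_i \to G_i \to K_i)_{i\in I}$ be a filtered diagram of short exact sequences in $\PMod^H$, indexed by a filtered small category $I$. By \cref{prop:parcomcomplete}, both the filtered colimit $\mathrm{colim}\, F_i$, $\mathrm{colim}\, G_i$, $\mathrm{colim}\, K_i$ and the kernel of the induced map $\mathrm{colim}\, G_i \to \mathrm{colim}\, K_i$ are created by the forgetful functor $U:\PMod^H\to\Vect_k$, since $U$ preserves all colimits and all finite limits. Hence the exactness of the induced sequence
\[0 \to \mathrm{colim}\, F_i \to \mathrm{colim}\, G_i \to \mathrm{colim}\, K_i \to 0\]
in $\PMod^H$ is equivalent to its exactness in $\Vect_k$, which is a well-known instance of AB5 for vector spaces (filtered colimits are exact in $\Vect_k$).

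I do not expect a real obstacle here: the content of the theorem has already been distributed across \cref{prop:parcomcomplete} (abelian structure, completeness, cocompleteness, and the fact that $U$ creates colimits and finite limits) and \cref{le:generator} (existence of a generator). The only ingredient to add is the observation that AB5 transfers from $\Vect_k$ to $\PMod^H$ through the forgetful functor, which is immediate because the forgetful functor creates both of the constructions (filtered colimits and kernels) involved in the statement of AB5.
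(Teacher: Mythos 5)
Your proposal is correct and follows essentially the same route as the paper: the paper likewise reduces AB5 to the exactness of filtered colimits in $\Vect_k$ using the fact (from \cref{prop:parcomcomplete}) that the forgetful functor creates colimits and finite limits, and then invokes \cref{le:generator} for the generator. Your version merely spells out the transfer of exactness a bit more explicitly, which is harmless.
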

\begin{proof}
	By \cref{prop:parcomcomplete}, colimits in \(\PMod^H\) and are computed as in \(\Vect_k\). Since \(\Vect_k\) is a Grothendieck category, the colimit functor is exact, and hence the same holds in \(\PMod^H\).	
	The result follows now by combining \cref{prop:parcomcomplete} and \cref{le:generator}.
\end{proof}

\begin{remark}
\label{rm:products}
    Since any Grothendieck category is complete, \(\PMod^H\) is too. Arbitrary products are constructed as follows: consider a family of partial comodules $(M_i,\rho_i)_{i\in J}$. We can then consider the product $\prod_{i\in I} M_i$ in the category of vector spaces, and there is an obvious inclusion
$$\left(\prod_{i\in I} M_i\right)\ot H\subset \prod_{i\in I} (M_i\ot H)$$
We define $P$ as the sum of all subspaces $N\subset\prod_{i\in I} M_i$ for which the image under the map $\prod_{i\in I} \rho_i: \prod_{i\in I} M_i\to \prod_{i\in I} (M_i\ot H)$ lies in $N\ot H\subset \left(\prod_{i\in I} M_i\right)\ot H$. Then $P$ is a partial comodule with coaction induced by $\prod_{i\in I} \rho_i$ and it satisfies the universal property of the product in $\PMod^H$.

Let us remark that from this construction, it is clear that in general the product $P$ in $\PMod^H$ is strictly smaller than the product in $\Vect$ and hence the forgetful functor $U$ does not preserve all limits.
\end{remark}

\begin{corollary}
	\label{th:comonad}
	Let $H$ be a Hopf algebra over a field $k$.
	Then the forgetful functor $\PMod^H\to \Vect_k$ is comonadic,
	in other words, $\PMod^H$ is equivalent to the Eilenberg-Moore category ${\Vect_k}^\CC$ with respect to some comonad $\CC$ on $\Vect_k$. 
\end{corollary}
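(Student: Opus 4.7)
The plan is to verify the hypotheses of Beck's precise comonadicity theorem, exactly as announced in the introduction. That theorem characterizes comonadic functors $U:\mathcal{A}\to\mathcal{B}$ by three conditions: (i) $U$ admits a right adjoint $R$; (ii) $U$ is conservative, i.e.\ reflects isomorphisms; (iii) $\mathcal{A}$ has equalizers of $U$-split parallel pairs and $U$ preserves them. Applying this to the forgetful functor $U:\PMod^H\to\Vect_k$ yields the desired equivalence $\PMod^H\simeq \Vect_k^{\mathbb{C}}$ with $\mathbb{C}=UR$.

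The one substantive step is producing the right adjoint, which I would do via the dual Special Adjoint Functor Theorem. All of the hypotheses of dual SAFT are already in place from the preceding subsection: by \cref{prop:parcomcomplete}, $\PMod^H$ is cocomplete and $U$ preserves all small colimits; by \cref{th:grothendieck}, $\PMod^H$ is a Grothendieck category, hence locally small and well-powered (in particular co-well-powered); and by \cref{le:generator}, it admits a generator. Dual SAFT then delivers the right adjoint $R$.

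The remaining two conditions are immediate. Conservativity is part of \cref{prop:parcomcomplete}, which states that isomorphisms in $\PMod^H$ are precisely the bijective underlying linear maps. For the equalizer condition, \cref{prop:parcomcomplete} supplies the much stronger fact that $U$ preserves and creates all finite limits, so it certainly preserves equalizers of $U$-split pairs. Assembling (i), (ii), (iii), Beck's theorem concludes the proof.

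The genuine difficulty of the whole argument does not lie in \cref{th:comonad} itself but in the prerequisites established earlier: showing that $\PMod^H$ is (co)complete with finite limits computed in $\Vect_k$, that it is a Grothendieck category, and that it has a generator. Once these are in hand, the corollary is a short formal consequence. The only technical care I would take is to invoke the correct dual formulation of SAFT---generator plus cocompleteness plus colimit-preservation giving a \emph{right} adjoint---rather than the more familiar version that produces a left adjoint from a cogenerator and limit-preservation.
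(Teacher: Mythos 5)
Your proposal is correct and follows exactly the paper's own argument: the right adjoint is obtained from the (dual) Special Adjoint Functor Theorem using cocompleteness, colimit-preservation, co-well-poweredness and the generator from \cref{prop:parcomcomplete}, \cref{th:grothendieck} and \cref{le:generator}, and then Beck's precise comonadicity theorem is applied using conservativity and preservation of equalizers from \cref{prop:parcomcomplete}. Your closing remark that the real work lies in the prerequisites, and the care about invoking the correct dual form of SAFT, match the paper's presentation.
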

\begin{proof}
By \cref{prop:parcomcomplete}, we know that $\PMod^H$ is cocomplete and since it is a Grothendieck category by \cref{th:grothendieck}, it is also well-copowered (i.e.\ the class of quotient objects of any object forms a set) and has a generator. Moreover by \cref{prop:parcomcomplete} the forgetful functor preserves all colimits. Therefore, we can apply the Special Adjoint Functor Theorem (see e.g.\ \cite[Exercise 1.9.20]{BarWel}) and we find that the forgetful functor \(U : \PMod^H \to \Vect_k\) has a right adjoint. By \cref{prop:parcomcomplete} we also know that 
\(\PMod^H\) has equalizers, $U$ preserves equalizers and reflects isomorphisms.
By Beck's precise comonadicity theorem (see e.g.\ \cite[Section 3.3, Theorem 10]{BarWel} or \cite{GT} for a comonadic formulation) we can conclude that \(\PMod^H\) is comonadic over \(\Vect_k\).
\end{proof}

The previous main result of this paper, raises the question if we can describe explicitly the comonad $\CC$, and moreover if this comonad could be induced by a $k$-coalgebra. In the next subsection, we will show that the answer to the second question is negative in general. An explicit construction of the comonad $\CC$ will be given in \seref{explicit}.

\subsection{Partial comodules are not equivalent to comodules}\label{se:parcomnotcom}

When \(C\) is a \(k\)-coalgebra and \(M\) a \(C\)-comodule, then the fundamental theorem of comodules asserts that every element of \(m\) is contained in a finite dimensional subcomodule. It was shown in \cite{coreps} that a similar theorem does not hold for partial comodules. Let us start by making some further observations about the example given in \cite{coreps}. 

\begin{example}
    \label{ex:irregular}
    Consider Sweedler's Hopf algebra \(H_4\), which is generated (as a vector space) by the unit \(1\), the grouplike element \(g\) and the skew-primitive elements \(x\) and \(y\) satisfying
    \begin{align*}
        g^2 = 1; &\qquad y = xg = -gx \\
        \Delta(x) = g \otimes x + x \otimes 1; &\qquad \Delta(y) = 1 \otimes y + y \otimes g;
        \\ S(x) = -y; &\qquad S(y) = x.
    \end{align*}
    Write \(e = \frac{1 + g}{2}\). Let \(k[z]\) the polynomial ring in the variable \(z\) and consider the linear map 
\[\rho : k[z] \to k[z] \otimes H_4 : z^n \mapsto z^n \otimes e + z^{n + 1} \otimes y.\]
Then \(X = (k[z], \rho)\) is a partial \(H_4\)-comodule.

Let us observe that any partial subcomodule of \(X\) is isomorphic to \(X\) itself. Let \(M \leq X\) be a partial subcomodule and \(f \in M\). Then \(z f \in M\) because \(zf \otimes y = \rho(f) - f \otimes e \in M \otimes H_4\). Therefore, $M$ is an ideal in $k[z]$, and since $k[z]$ is a PID, $M$ is a principal ideal generated by its monic element of lowest degree $f$. We can conclude that
\[\varphi : X \to M : z^n \mapsto z^n f\]
is an isomorphism of partial comodules.

In particular, denoting by \(W_n\) the partial subcomodule of \(X\) generated by \(z^n\), we obtain an infinite descending chain of partial subcomodules
\[X = W_0 \supsetneq W_1 \supsetneq \cdots \supsetneq W_n \supsetneq \cdots\]
From this observation, we will now show that \(\PMod^{H_4}\) is not equivalent to \(\Mod^C\) for any coalgebra \(C\).
\end{example}


Recall (see e.g.\ \cite[Section V.3]{Stenstrom}) that an object $X$ in a Grothendieck category $\Cc$ is called finitely generated if and only if one of the following two equivalent conditions holds: 
\begin{enumerate}[(1)]
\item if $X$ can be written as a filtered colimit of subobjects, then $X$ equals one of these subobjects;
\item the functor $\Hom_{\mathcal{C}}(X, -) : \mathcal{C} \to \mathsf{Set}$ preserves filtered colimits of monomorphisms. 
\end{enumerate}
By the fundamental theorem of comodules (which states that any comodule is the filtered colimit (i.e.\ the sum) of its finite dimensional subcomodules) over a coalgebra $C$, it is clear that a comodule is finitely generated as an object in $\Mod^C$ if and only if it is finite dimensional. The following Lemma in combination with \cref{ex:irregular} shows that there can exist finitely generated objects in the category of partial comodules over a Hopf algebra that are not finite dimensional.

\begin{lemma}
    \label{le:fingenparco}
    Let \(M\) be a partial \(H\)-comodule and suppose there exists a finite set \(F \subseteq M\) for which the only partial subcomodule of \(M\) containing \(F\) is \(M\) itself. Then \(M\) is a finitely generated object in the Grothendieck category $\PMod^H$.
\end{lemma}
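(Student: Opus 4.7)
The plan is to verify condition (1) in the definition of a finitely generated object in a Grothendieck category, namely: whenever $M$ is written as a filtered colimit of subobjects (in $\PMod^H$), $M$ coincides with one of these subobjects.

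So suppose $(M_i)_{i\in I}$ is a filtered diagram of partial subcomodules of $M$ with $\mathop{\mathrm{colim}} M_i = M$. By \cref{prop:parcomcomplete}, the forgetful functor $U : \PMod^H \to \Vect_k$ preserves colimits, so this colimit is computed in $\Vect_k$. Filtered colimits of a directed system of subspaces in $\Vect_k$ are given by the union, so we obtain
\[
M \;=\; \bigcup_{i\in I} M_i
\]
as vector spaces (with the common partial coaction restricted from $M$).

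Now I would use finiteness of $F$. Since $F \subseteq M = \bigcup_i M_i$, each $f \in F$ is contained in some $M_{i(f)}$. The diagram being filtered and $F$ being finite, there exists an index $i_0 \in I$ such that $M_{i(f)} \subseteq M_{i_0}$ for every $f \in F$, hence $F \subseteq M_{i_0}$. But $M_{i_0}$ is a partial subcomodule of $M$ containing $F$, so the hypothesis forces $M_{i_0} = M$. This shows $M$ occurs in the diagram as required.

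There is no real obstacle here: the argument is the standard one identifying ``finitely generated in the sense of category theory'' with ``containing a finite generating set'', and the only ingredient beyond a formal chase is that colimits in $\PMod^H$ are created in $\Vect_k$, which is already guaranteed by \cref{prop:parcomcomplete}.
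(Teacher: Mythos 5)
Your proof is correct. The only real difference from the paper's argument is which of the two equivalent characterizations of ``finitely generated'' (both recalled just before the lemma, with a reference to Stenstr\"om) you choose to verify: you check condition (1), that any presentation of $M$ as a filtered colimit of its own subobjects must already contain $M$, while the paper checks condition (2), that $\Hom(M,-)$ preserves filtered colimits of monomorphisms. Concretely, the paper takes an arbitrary morphism $f : M \to N = \varinjlim N_i$ into a filtered colimit of monomorphisms, uses finiteness of $F$ and filteredness to get $f(F) \subseteq N_i$ for some $i$, and then applies the hypothesis to the partial subcomodule $f^{-1}(N_i) \subseteq M$ to force $f(M) \subseteq N_i$; your argument is the special case $N = M$, $f = \mathrm{id}$, which avoids the pullback step entirely. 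What your route buys is economy; what it costs is that you lean on the equivalence of (1) and (2) as a black box, whereas the paper's computation directly establishes the Hom-preservation property that is actually used downstream. One small point worth making explicit in your write-up: the identification of the filtered colimit with the union $\bigcup_i M_i$ uses that the transition maps in a diagram of subobjects are the inclusions, together with the fact (from \cref{prop:parcomcomplete}) that $U$ creates colimits --- you do invoke the latter, so this is a presentational remark rather than a gap.
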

\begin{proof}
Let \(N = \varinjlim N_i\) be a filtered colimit of monomorphisms in \(\PMod^H\) and \(f : M \to N\) a morphism of partial comodules. Since the colimit is filtered and $F$ is finite, there is an \(i\) such that \(f(F) \subseteq N_i\). If \(f(M) \nsubseteq N_i\), \(f^{-1}(N_i)\) would be a proper partial subcomodule of \(M\) containing \(F\), hence \(f(M) \subseteq N_i\), which shows that the canonical injection 
\[\varinjlim \Hom (M, N_i) \to \Hom(M, N)\]
is surjective. 
\end{proof}

%
%

\begin{proposition}
    \label{prop:H4}
    There does not exist a coalgebra \(C\) such that \(\PMod^{H_4}\) is equivalent to \(\Mod^C\). 
\end{proposition}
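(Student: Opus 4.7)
The plan is to argue by contradiction, using the partial comodule $X=(k[z],\rho)$ from \cref{ex:irregular} to produce an object that is simultaneously finitely generated and non-artinian in $\PMod^{H_4}$. Such an object cannot exist in any category $\Mod^C$ of comodules over a coalgebra, because the fundamental theorem of comodules forces every finitely generated $C$-comodule to be finite dimensional, and hence artinian. The strategy is purely categorical: one identifies an invariant of the Grothendieck category structure that $\PMod^{H_4}$ has but $\Mod^C$ cannot have.

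The first step is to verify that $X$ is a finitely generated object in $\PMod^{H_4}$. By \cref{le:fingenparco}, it suffices to produce a finite subset $F\subseteq X$ whose only partial subcomodule containing $F$ is $X$ itself. I would take $F=\{1\}=\{z^0\}$ and invoke the analysis in \cref{ex:irregular}, which shows that the partial subcomodule $W_0$ generated by $z^0$ is already all of $X$. Hence $X$ is finitely generated in the sense of \cref{Grothendieck}.

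The second step is to transport this situation through a hypothetical equivalence $\PMod^{H_4}\simeq \Mod^C$. Being finitely generated in a Grothendieck category is characterized purely categorically (namely, $\Hom(X,-)$ preserves filtered colimits of monomorphisms), and equivalences preserve subobject posets up to isomorphism. Consequently, the image of $X$ in $\Mod^C$ would be a finitely generated $C$-comodule admitting an infinite strictly descending chain of subcomodules corresponding to the chain
\[X = W_0 \supsetneq W_1 \supsetneq \cdots \supsetneq W_n \supsetneq \cdots\]
of partial subcomodules exhibited in \cref{ex:irregular}.

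Finally, I would close the argument by recalling that the fundamental theorem of comodules implies every finitely generated object of $\Mod^C$ is finite dimensional, and any finite dimensional comodule is artinian (its subcomodules are subspaces of a finite dimensional space). This contradicts the existence of the descending chain above, completing the proof. I do not anticipate any real obstacle: the example and the preparatory \cref{le:fingenparco} do all the technical work, and what remains is the standard categorical observation that ``finitely generated'' and the subobject lattice are equivalence-invariants of Grothendieck categories.
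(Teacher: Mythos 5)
Your proposal is correct and follows essentially the same route as the paper: the paper likewise observes that \(X\) from \cref{ex:irregular} is finitely generated (via \cref{le:fingenparco}) but not artinian because of the chain \(W_0 \supsetneq W_1 \supsetneq \cdots\), while every finitely generated object of \(\Mod^C\) is finite dimensional and hence artinian. Your choice of \(F=\{z^0\}\) and the remark that finite generation and the subobject lattice are equivalence-invariants just make explicit what the paper leaves implicit.
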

\begin{proof}
    As explained above, every finitely generated object in \(\Mod^C\) has a finite dimensional underlying vector space; in particular, every ascending or descending  chain of subobjects of a finitely generated object stabilizes (i. e. every finitely generated object is noetherian and artinian). However, the partial comodule \(X\) from Example \ref{ex:irregular} is finitely generated but not artinian. 
\end{proof}

\subsection{Regularity}\label{se:regular}
Example \ref{ex:irregular} shows that \cref{le:countabledim} cannot be strengthened in general. However, for some Hopf algebras a fundamental theorem for partial comodules does hold. This was the motivation for the introduction of the notion of regular partial comodule in \cite{coreps}. Let us recall the exact definition.

\begin{definition}
	A partial comodule \(M\) over \(H\) is called \textit{regular} if \(M\) is the sum of its finite dimensional partial subcomodules. 
	A Hopf algebra \(H\) is {\em regular} if every partial comodule over \(H\) is regular.
\end{definition}

We have the following result, which is basically a reformulation of the results from \cite[Section 5.1]{coreps}.


\begin{proposition}\label{regularH}
	A Hopf algebra \(H\) is regular if and only if the comonad $\CC$ from \cref{th:comonad} is of the form $\CC\cong -\ot H^{par}$ for some coalgebra $H^{par}$. 
\end{proposition}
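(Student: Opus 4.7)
The plan is to prove the two implications of the equivalence separately.

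For the direction $(\Leftarrow)$, assume $\CC\cong -\otimes H^{par}$ as comonads on $\Vect_k$. Then the Eilenberg--Moore category ${\Vect_k}^\CC$ is canonically isomorphic to the category $\Mod^{H^{par}}$ of global $H^{par}$-comodules, and this identification commutes with the forgetful functors to $\Vect_k$. Combining with \cref{th:comonad} yields an equivalence $\PMod^H\simeq \Mod^{H^{par}}$ compatible with the forgetful functors, under which finite dimensional (partial) subcomodules correspond to each other. The fundamental theorem of comodules forces every $H^{par}$-comodule to be the sum of its finite dimensional subcomodules; hence every partial $H$-comodule is the sum of its finite dimensional partial subcomodules, i.e.\ $H$ is regular.

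For the direction $(\Rightarrow)$, assume $H$ is regular. The strategy is to show that $\CC$ preserves filtered colimits, deduce $\CC\cong -\otimes \CC(k)$ by an Eilenberg--Watts-style argument, and set $H^{par}:=\CC(k)$. First, I would verify that $\PMod^H$ is locally finitely presentable with finite dimensional partial comodules as its finitely presentable objects. Regularity together with \cref{le:countabledim} gives that every partial comodule is the directed union of its finite dimensional partial subcomodules, which form a set of generators up to isomorphism; moreover, a finite dimensional partial comodule $M$ is finitely presentable because, by \cref{prop:parcomcomplete}, filtered colimits in $\PMod^H$ are created by $U$, and $\Hom_{\PMod^H}(M,-)$ can be written as an equalizer of two functors of type $\Hom_{\Vect_k}(U(M),-)$ (cut out by compatibility with the partial coactions), each preserving filtered colimits since $U(M)$ is finite dimensional. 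Next, by the standard fact for locally finitely presentable categories that a left adjoint between such categories preserves finitely presentable objects if and only if its right adjoint preserves filtered colimits, and noting that $U$ obviously preserves finite dimensionality, I obtain that $R$ preserves filtered colimits; hence so does $\CC=UR$. Finally, writing an arbitrary $V\in\Vect_k$ as the filtered colimit of its finite dimensional subspaces $V_\alpha$ and exploiting the additive identification $\CC(V_\alpha)\cong V_\alpha\otimes \CC(k)$ available on such $V_\alpha$, the natural comparison morphism $V\otimes \CC(k)\to \CC(V)$ is an isomorphism, and transporting the comonad structure across this iso equips $H^{par}$ with the desired coalgebra structure.

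The main obstacle I anticipate is the verification, in the forward direction, that finite dimensional partial comodules are finitely presentable objects of $\PMod^H$. This step requires carefully unwinding $\Hom$-sets in $\PMod^H$ as equalizers in $\Vect_k$ and invoking the interchange of filtered colimits with finite limits of vector spaces. Once this is in place, the invocation of the standard lfp result on right adjoints preserving filtered colimits and the final tensor-product identification of $\CC$ proceed essentially formally.
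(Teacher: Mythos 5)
Your proof is correct, but the forward implication follows a genuinely different route from the paper's. For $(\Leftarrow)$ you argue exactly as the paper does: the comonadic equivalence of \cref{th:comonad} identifies $\PMod^H$ with $\Mod^{H^{par}}$ compatibly with the forgetful functors, and the fundamental theorem of comodules transports regularity back to $\PMod^H$. For $(\Rightarrow)$, however, the paper simply invokes the explicit construction of the coalgebra $H^{par}$ for a regular Hopf algebra carried out in \cite[Section 5.1]{coreps}, whereas you rebuild $H^{par}$ abstractly as $\CC(k)=UR(k)$: regularity makes $\PMod^H$ locally finitely presentable with the finite dimensional partial comodules as the finitely presentable objects (your equalizer description of $\Hom_{\PMod^H}(M,-)$ is the right way to see this, and note that regularity also forces every finitely presentable object to be finite dimensional, which is exactly what fails in \cref{ex:irregular}), the standard lfp adjunction criterion then gives that $R$, hence $\CC=UR$, preserves filtered colimits, and since $\CC$ also preserves finite biproducts ($R$ preserves products, $U$ preserves finite limits by \cref{prop:parcomcomplete}), the canonical natural map $V\otimes\CC(k)\to\CC(V)$ is an isomorphism first for finite dimensional $V$ and then for all $V$ by passing to filtered colimits; finally, every natural transformation $-\otimes A\to-\otimes B$ of endofunctors of $\Vect_k$ is induced by a single linear map $A\to B$, so the transported comonad structure really is a coalgebra structure on $H^{par}=\CC(k)$. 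What each approach buys: the paper's citation yields a concrete, generators-and-relations description of $H^{par}$ (which is used elsewhere, e.g.\ in \cref{le:Hfinreg}), while your argument is self-contained within the comonadic framework of this paper, characterizes $H^{par}$ by a universal property, and makes transparent exactly where regularity enters (namely, in identifying the finitely presentable objects). Both are valid; yours is a clean alternative at the cost of importing standard lfp machinery.
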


\begin{proof}
The Eilenberg-Moore category of a comonad of the form $\CC\cong -\ot H^{par}$ is isomorphic to the category of $H^{par}$-comodules. Furthermore, by the comonadicity of the forgetful functor $U:\PMod^H\to \Vect_k$ (see \cref{th:comonad}), the equivalence of $\PMod^H$ and the Eilenberg-Moore category $\Vect_k^\CC$ commutes with the forgetful functors to $\Vect_k$. Combining this, we find that $\CC$ is of the form $-\ot H^{par}$ for some coalgebra $H^{par}$ if and only if
	there exists an equivalence \(\PMod^H \simeq \Mod^{H^{par}}\) that respects the forgetful functor to \(\Vect_k:\)
	\[\begin{tikzcd}
		\PMod^H \arrow[dash]{rr}{\simeq} \arrow{dr} && \Mod^{H^{par}} \arrow{dl} \\
		& \Vect_k &
		\end{tikzcd}\]
In \cite[Section 5.1]{coreps} an explicit construction of such a coalgebra $H^{par}$ for a regular Hopf algebra $H$ is given. If conversely an equivalence of categories as in the above diagram exists, then by the fundamental theorem of comodules for $H^{par}$, we find that also every partial $H$-comodule is the sum of its finite dimensional subcomodules.
\end{proof}

Before stating the next result, we provide the following lemma, which has its own interest.

\begin{lemma}\label{equivcatfindim}
If $C$ is a $k$-coalgebra and $R$ a $k$-algebra such that there is an equivalence of categories $\Mod_R\cong \Mod^C$, then $R$ and $C$ are finite dimensional.
\end{lemma}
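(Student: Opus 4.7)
The strategy is to exploit that an equivalence of Grothendieck categories preserves the intrinsic notion of ``finitely generated object'' (as recalled before \cref{le:fingenparco}), and to combine this with the fundamental theorem of comodules, which forces the finitely generated objects in $\Mod^C$ to be exactly the finite-dimensional ones. Let $F:\Mod_R \to \Mod^C$ be the equivalence (of $k$-linear categories), with quasi-inverse $G$, and set $P := F(R)$.

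First I would show that $P$ is finite-dimensional. The functor $\Hom_R(R,-)$ is naturally isomorphic to the forgetful functor $\Mod_R \to \Vect_k$ (via evaluation at $1$), so it preserves all filtered colimits. Hence $R$ is finitely generated as an object of the Grothendieck category $\Mod_R$, and since ``finitely generated'' is a purely categorical property, $P = F(R)$ is finitely generated in $\Mod^C$. By the fundamental theorem of comodules, $P$ is the directed union of its finite-dimensional subcomodules, and finite generation forces $P$ to coincide with one of them. Consequently $F$ induces a $k$-algebra isomorphism $R \cong \End_{\Mod_R}(R) \cong \End_{\Mod^C}(P)$, and the right-hand side embeds into the finite-dimensional algebra $\End_k(P)$; hence $\dim_k R < \infty$.

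For $C$, the plan is to compute $G(C)$ explicitly. Since $F$ is an equivalence, $G$ is in particular right adjoint to $F$, so for every $X \in \Mod^C$ one has
\[G(X) \;\cong\; \Hom_R(R,G(X)) \;\cong\; \Hom_C(F(R),X) \;=\; \Hom_C(P,X).\]
Taking $X = C$ and using the cofree--forgetful adjunction $\Hom_C(P, V\otimes C)\cong \Hom_k(P,V)$ with $V = k$, we obtain $G(C) \cong P^{*}$, which is finite-dimensional over $k$. In particular $G(C)$ is finitely generated as an $R$-module (any $k$-basis is a generating set), and applying $F$ back shows that $C = FG(C)$ is finitely generated in $\Mod^C$; by the first step, this forces $\dim_k C < \infty$.

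I do not anticipate a serious obstacle: the crucial point, and really the whole engine of the argument, is to stay consistently within the categorical notion of ``finitely generated'', since this is what is transported by the equivalence and allows finite-dimensionality to be propagated through the round trip $R \rightsquigarrow P \rightsquigarrow G(C) \rightsquigarrow C$. The only mildly delicate step is the identification $G(C)\cong P^{*}$, which rests on the cofree comodule description of $C$ as the representing object of the forgetful functor on $\Mod^C$.
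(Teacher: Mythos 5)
Your argument is correct, but it takes a genuinely different route from the paper. The paper's proof is a one-line appeal to \cite[Corollary 3.7]{Ver:equiv}: setting $M=F(R)$, that result directly yields that $M$ is finite dimensional and gives explicit presentations $C\cong M^*\ot_T M$ and $R\cong \End^C(M)$, from which finite dimensionality of both $C$ and $R$ falls out. You instead give a self-contained argument built on the categorical notion of finitely generated object in a Grothendieck category (exactly the notion the paper recalls before \cref{le:fingenparco}) together with the fundamental theorem of comodules: $R$ is finitely generated in $\Mod_R$ since $\Hom_R(R,-)$ is the forgetful functor, so $P=F(R)$ is finitely generated in $\Mod^C$ and hence finite dimensional, giving $R\cong\End_{\Mod^C}(P)\subseteq \End_k(P)$ finite dimensional; then the adjunction computation $G(C)\cong \Hom_C(P,C)\cong P^*$ (via the cofree--forgetful adjunction) and the round trip $C\cong FG(C)$ force $C$ to be finitely generated, hence finite dimensional. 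Each step checks out, including the identification of $G$ as a right adjoint of $F$ and the passage from ``finite dimensional'' to ``finitely generated as an object of $\Mod_R$''. What your approach buys is transparency and independence from the external reference, using only machinery already deployed elsewhere in the paper; what the paper's approach buys is the extra structural information (the explicit description of $C$ and $R$ in terms of $M$), which is not needed here but comes for free from the cited result. The only implicit hypothesis you share with the paper is that the equivalence is $k$-linear, which is needed to conclude finite dimensionality \emph{over $k$} from $R\cong\End_{\Mod^C}(P)$; this is the intended reading in all applications.
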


\begin{proof}
This is a direct application of \cite[Corollary 3.7]{Ver:equiv}. Denote the equivalence by $F:\Mod_R\to \Mod^C$ and $F(R)=M$. Then by \cite[Corollary 3.7 (i)]{Ver:equiv}, $M$ is finite dimensional (in our case, the base ring $A$ from \cite[Corollary 3.7]{Ver:equiv} is the ground field $k$). Therefore $C\cong M^*\ot_T M$ (by \cite[Corollary 3.7 (iii)]{Ver:equiv}) and $R\cong \End^C(M)$ (by \cite[Corollary 3.7 (iv)]{Ver:equiv}) are finite dimensional.
\end{proof}

Let \(H\) be a finite dimensional Hopf algebra, \(H^*\) its dual Hopf algebra and consider \((H^*)_{par}\). Combining \cref{Hpar} and \cref{thm:duality}  we obtain an equivalence
\[\PMod^H \simeq {_{(H^*)_{par}}} \Mod.\]
Explicitly, when \((M, \rho)\) is a partial comodule, then 
\[(H^*)_{par} \otimes M \to M : [h^*] \otimes m \mapsto (M \otimes h^*) \rho(m)\]
defines an \((H^*)_{par}\)-module structure on \(M\). 
Conversely, let \(\{h_i, h_i^*\}\) a dual basis for \(H\), then when \(M\) is an \((H^*)_{par}\)-module, 
\[\rho : M \to M \otimes H : m \mapsto \sum_i [h_i^*] \cdot m \otimes h_i\]
defines a partial comodule structure on \(M\). 

\begin{theorem}
    	\label{le:Hfinreg}
	Let $H$ be a finite dimensional Hopf algebra. Then the following statements are equivalent:
\begin{enumerate}[(i)]		
\item \(H\) is regular; 
\item \((H^*)_{par}\) is finite dimensional;
\item there exists a coalgebra \(C\) such that \(\PMod^H \simeq \Mod^C\). 
\end{enumerate}
Under these equivalent conditions, we have a natural isomorphism $H^{par}\cong ((H^*)_{par})^*$.
\end{theorem}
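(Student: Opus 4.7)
The plan is to prove the cycle of implications (i) $\Rightarrow$ (iii) $\Rightarrow$ (ii) $\Rightarrow$ (i), and then obtain the natural isomorphism by comparing two realizations of the forgetful functor $U : \PMod^H \to \Vect_k$ as a comonadic functor. The implication (i) $\Rightarrow$ (iii) is immediate from \cref{regularH}: regularity produces a coalgebra $H^{par}$ with $\PMod^H \simeq \Mod^{H^{par}}$ over $\Vect_k$, so one can take $C = H^{par}$.

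For (iii) $\Rightarrow$ (ii), I would combine the hypothetical equivalence $\PMod^H \simeq \Mod^C$ with $\PMod^H \simeq {}_{(H^*)_{par}}\Mod$, obtained by composing \cref{Hpar} with \cref{thm:duality} (noting that $H^\circ = H^*$ since $H$ is finite dimensional). Rewriting ${}_{(H^*)_{par}}\Mod = \Mod_{(H^*)_{par}^{op}}$ and applying \cref{equivcatfindim} yields that both $C$ and $(H^*)_{par}^{op}$, hence $(H^*)_{par}$, are finite dimensional. For (ii) $\Rightarrow$ (i), a left module over a finite dimensional algebra $A$ is the sum of its cyclic submodules, each a quotient of $A$ and thus finite dimensional; transporting across $\PMod^H \simeq {}_{(H^*)_{par}}\Mod$, which preserves underlying vector spaces and subobjects, gives regularity.

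For the natural isomorphism, set $A = (H^*)_{par}$, finite dimensional by (ii). The standard duality ${}_A\Mod \simeq \Mod^{A^*}$, sending a left action to the right coaction $m \mapsto \sum_i (a_i \cdot m) \otimes a_i^*$ for a dual basis $\{a_i, a_i^*\}$, commutes with the forgetful functors to $\Vect_k$. Composing with $\PMod^H \simeq {}_A\Mod$ gives $\PMod^H \simeq \Mod^{A^*}$ over $\Vect_k$, while (i) $\Rightarrow$ (iii) provides $\PMod^H \simeq \Mod^{H^{par}}$ over $\Vect_k$. Since a coalgebra is recovered up to canonical isomorphism from the associated comonad $-\otimes C$ on $\Vect_k$ (evaluate at $k$ and read off the induced comultiplication and counit), the two equivalences force $H^{par} \cong A^* = ((H^*)_{par})^*$ as coalgebras, and naturality follows from naturality of each ingredient. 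The main technical care point is precisely this compatibility of forgetful functors throughout the chain, together with the minor bookkeeping of left versus right modules in \cref{equivcatfindim}, which is stated for right modules but extends via the opposite algebra.
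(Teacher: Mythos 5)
Your cycle of implications is essentially the paper's own argument: (i)\,$\Rightarrow$\,(iii) via \cref{regularH} with $C=H^{par}$, (iii)\,$\Rightarrow$\,(ii) by chaining $\Mod^C\simeq\PMod^H\simeq{}_{(H^*)_{par}}\Mod$ and invoking \cref{equivcatfindim} (your remark about passing to the opposite algebra to match the right-module formulation is a reasonable bookkeeping step the paper glosses over), and (ii)\,$\Rightarrow$\,(i) by the finiteness of cyclic modules over a finite dimensional algebra --- you phrase this directly (every element lies in a finite dimensional cyclic submodule, transported across the equivalence) where the paper argues by contradiction, but it is the same idea. The one place you genuinely depart from the paper is the final natural isomorphism $H^{par}\cong((H^*)_{par})^*$: the paper simply cites \cite[Corollary 5.4]{coreps}, whereas you rederive it from the uniqueness of the coalgebra representing the comonad $UR$, using the standard equivalence ${}_A\Mod\simeq\Mod^{A^*}$ for finite dimensional $A$ over $\Vect_k$. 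This is a correct and self-contained alternative (the key point, that an equivalence $\Mod^{C}\simeq\Mod^{D}$ commuting with the forgetful functors forces an isomorphism of the comonads $-\otimes C\cong-\otimes D$ and hence $C\cong D$ by evaluation at $k$, is exactly the mechanism already used in the proof of \cref{regularH}); what it buys is independence from the external reference, at the cost of a slightly longer argument.
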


\begin{proof}
$\ul{(ii)\Rightarrow(i)}$.
Suppose that \(H\) is not regular. Then there exists an infinite dimensional cyclic partial comodule \(M\), generated by \(m \in M\).
By the observations above, $M$ is also an infinite dimensional cyclic partial $(H^*)_{par}$-module, which is in contradiction with $(H^*)_{par}$ being finite dimensional.
	
$\ul{(i)\Rightarrow (iii)}$.
This follows directly from  \cref{regularH}, taking $C=H^{par}$.

$\ul{(iii)\Rightarrow(ii)}$
By the remarks preceding the Theorem, we find that $\Mod^{C}\simeq \PMod^H \simeq{_{(H^*)_{par}}\Mod}$. Then by \cref{equivcatfindim} we can immediately deduce that $(H^*)_{par}$ is finite dimensional.
%

	The last statement follows directly from \cite[Corollary 5.4]{coreps}.
\end{proof}

In \cite{doku}, it was shown that \(kG_{par}\) is a finite dimensional algebra for any finite group \(G\), hence we have the following corollary. 

\begin{corollary}
\label{cor:dualgroupregular}
	Let \(G\) be a finite group. Then \(kG^*\) is regular.
\end{corollary}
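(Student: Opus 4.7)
The plan is to apply \cref{le:Hfinreg} directly to the finite dimensional Hopf algebra $H = kG^*$. Since $G$ is finite, $kG$ is a finite dimensional Hopf algebra, and so is its dual $kG^*$, so the hypothesis of \cref{le:Hfinreg} is satisfied.

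First, I would identify $(H^*)_{par}$ in this setting. Because $G$ is finite, the canonical pairing gives $H^* = (kG^*)^* \cong kG$ as Hopf algebras, and therefore $(H^*)_{par} \cong (kG)_{par} = kG_{par}$. Thus proving that $H = kG^*$ is regular is equivalent, by the implication $(ii)\Rightarrow(i)$ of \cref{le:Hfinreg}, to proving that $kG_{par}$ is finite dimensional.

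The latter is exactly the result from \cite{doku} recalled immediately before the corollary statement: for any finite group $G$, the partial group algebra $kG_{par}$ is finite dimensional (this follows, for instance, from the description of $kG_{par}$ as a groupoid algebra over a finite groupoid built from $G$). Combining this fact with the equivalence $(i)\Leftrightarrow(ii)$ in \cref{le:Hfinreg} applied to $H=kG^*$ yields the regularity of $kG^*$ at once. Since every step is a direct invocation of an already-established result, there is no real obstacle to overcome; the entire proof consists essentially of identifying $H^*$ with $kG$ and citing the two previous results.
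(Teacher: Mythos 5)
Your proof is correct and is exactly the paper's argument: the paper derives the corollary from the finite-dimensionality of \(kG_{par}\) (recalled from \cite{doku} immediately before the statement) together with the equivalence \((i)\Leftrightarrow(ii)\) of \cref{le:Hfinreg} applied to \(H=kG^*\), using \((kG^*)^*\cong kG\). No differences worth noting.
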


\begin{corollary}
	Let k be algebraically closed and of characteristic 0 and let \(G\) be a finite abelian group. Then \(kG\) is regular.
\end{corollary}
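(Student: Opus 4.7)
The plan is to reduce this to \cref{cor:dualgroupregular} by exhibiting a Hopf algebra isomorphism $kG \cong (kG)^*$. Since regularity is clearly a Hopf-algebraic invariant (it is a property of the category $\PMod^H$), this would immediately give the result, because \cref{cor:dualgroupregular} says $kG^* = (kG)^*$ is regular.

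First, I would use classical character theory. Because $k$ is algebraically closed of characteristic $0$, it contains a primitive $n$-th root of unity for every $n$. By the structure theorem for finite abelian groups, $G$ is a product of cyclic groups $\ZZ/n_i\ZZ$, and each factor admits an isomorphism with $\Hom(\ZZ/n_i\ZZ, k^\times)$ by sending a generator to a chosen primitive $n_i$-th root of unity. Thus the character group $\hat{G} := \Hom(G, k^\times)$ is (non-canonically) isomorphic to $G$ as an abstract group, and consequently $k\hat{G} \cong kG$ as Hopf algebras.

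Next, I would identify $(kG)^*$ with $k\hat{G}$ as a Hopf algebra. Since $G$ is abelian, $kG$ is commutative, so $(kG)^*$ is cocommutative. Its group-like elements are precisely the $k$-algebra maps $kG \to k$, i.e.\ the characters $\chi : G \to k^\times$, which form the group $\hat{G}$ under pointwise multiplication. Since $|\hat{G}| = |G| = \dim_k (kG)^*$ and distinct characters are linearly independent (classical Dedekind independence, valid because $kG$ is semisimple under our hypotheses via Maschke), the group-likes form a $k$-basis of $(kG)^*$. One then checks on this basis that the Hopf structure of $(kG)^*$ agrees with that of $k\hat{G}$: multiplication in $(kG)^*$ is pointwise product of characters, matching multiplication in $k\hat{G}$; and comultiplication in $(kG)^*$, being dual to multiplication in $kG$, sends each group-like $\chi$ to $\chi \otimes \chi$, matching the comultiplication of the grouplike $\chi \in k\hat{G}$. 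Hence $(kG)^* \cong k\hat{G}$ as Hopf algebras.

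Chaining the two isomorphisms yields $kG \cong k\hat{G} \cong (kG)^*$ as Hopf algebras, and \cref{cor:dualgroupregular} applied to $G$ itself concludes the proof. The only nontrivial point is verifying that the algebra isomorphism $(kG)^* \cong k\hat{G}$ is actually a Hopf-algebra isomorphism; this is a short routine computation on the grouplike basis. Note that the hypotheses on $k$ are used precisely to guarantee enough characters ($\hat{G} \cong G$) and semisimplicity of $kG$, which together make the above dualization work.
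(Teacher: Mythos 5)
Your proposal is correct and follows exactly the paper's route: the paper's proof consists of the single observation that under these hypotheses $kG \cong (kG)^*$ as Hopf algebras, after which \cref{cor:dualgroupregular} applies. You have simply supplied the standard character-theoretic details behind that isomorphism (which the paper leaves implicit), so there is nothing to add or correct.
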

\begin{proof}
    Under the stated conditions, \(kG^*\) is isomorphic to \(kG\) as Hopf algebras, hence the result follows from \cref{cor:dualgroupregular}.
\end{proof}

It is unclear whether \(kG\) is regular for non-abelian finite groups. We confirmed with a computer that \((kS_3)^*_{par}\) is 51-dimensional and \((kD_8^*)_{par}\) and \((kQ_8^*)_{par}\) are both 180-dimensional, hence \(kS_3, kD_8, kQ_8\) are all regular. In fact, we expect all finite dimensional cosemisimple Hopf algebras to be regular.

\begin{conjecture}
	\label{conj:regular}
	If \(H\) is finite dimensional and cosemisimple, then \(H\) is regular. 
\end{conjecture}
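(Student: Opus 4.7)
The plan is to exploit \cref{le:Hfinreg}: since $H$ is finite-dimensional, regularity is equivalent to the finite-dimensionality of $(H^*)_{par}$, and under the hypothesis that $H$ is cosemisimple the dual $A := H^*$ is a finite-dimensional semisimple Hopf algebra. The conjecture is thereby reduced to the purely algebraic statement: if $A$ is a finite-dimensional semisimple Hopf algebra, then $A_{par}$ is finite-dimensional. In this reformulation one can hope to extract a finite spanning set for $A_{par}$ from its defining relations, using the rich structure of $A$: in particular the existence of a two-sided normalized integral $t \in A$ with $\varepsilon(t)=1$, and, in characteristic zero, the involutivity $S^2 = \id$ due to Larson--Radford. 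Compatibility of this with the computer-verified cases of $kS_3$, $kD_8$ and $kQ_8$ above already provides a strong sanity check.

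The main strategy I would adopt is to analyse the commutative ``base'' subalgebra $B \subseteq A_{par}$ generated by the idempotents $\varepsilon_a := [a_{(1)}][S(a_{(2)})]$. In the group algebra case \cite{doku} this base is finite-dimensional and corresponds to the $k$-linearisation of an explicit combinatorial set indexing a groupoid. My first step would be to prove that for semisimple $A$, the linear map $a \mapsto \varepsilon_a$ has image spanning a finite-dimensional commutative subalgebra of $A_{par}$; a plausible mechanism is that semisimplicity forces sufficient orthogonality between the $\varepsilon_a$ associated to distinct simple summands in the Wedderburn decomposition of $A$, together with central-idempotent computations that bound the length of non-redundant $\varepsilon$-monomials. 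Using the two partial relations (PM2) and (PM3) to move every bracket $[c]$ past a product of idempotents $\varepsilon$, one would then aim to establish a normal form $\varepsilon_{b_1} \cdots \varepsilon_{b_k}\,[c]$ with $k$ bounded, showing that $A_{par}$ is finitely generated as a $B$-module, and hence finite-dimensional.

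A complementary, more categorical route goes through globalization \cite{SV3}: any partial $H$-comodule $M$ embeds into a global comodule $M'$, and cosemisimplicity of $H$ guarantees that $M'$ decomposes as a direct sum of finite-dimensional simple $H$-comodules. One would then try to descend this decomposition to $M$: for every $m \in M$, produce a finite-dimensional global subcomodule $N \subseteq M'$ containing the orbit of $m$ under iterated coaction, and show that $M \cap N$ is a partial subcomodule of $M$, where one has to arrange compatibility of the globalization projection with $N$. The main obstacle in either approach is exactly what fails for Sweedler's $H_4$ in \cref{ex:irregular}: the non-trivial nilpotent $y$ produces the infinite strictly descending chain $X = W_0 \supsetneq W_1 \supsetneq \cdots$, showing that absence of cosemisimplicity lets the iterated coactions (equivalently, the idempotents $\varepsilon_a$) avoid stabilising. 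Isolating the precise semisimplicity input that forces this stabilisation is, I believe, the heart of the conjecture.
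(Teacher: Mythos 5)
This statement is an open conjecture in the paper, not a theorem: the authors offer no proof, only computational evidence for $kS_3$, $kD_8$, $kQ_8$ and the observation (which you also make, via \cref{le:Hfinreg}) that it is equivalent to ``$A$ finite-dimensional semisimple $\Rightarrow$ $A_{par}$ finite-dimensional''. Your proposal performs exactly this same reduction, which is correct, but everything after that point is a research plan rather than an argument, and the plan stops precisely where the difficulty begins. In the first route, the two key claims --- that the subalgebra $B$ generated by the idempotents $\varepsilon_a=[a_{(1)}][S(a_{(2)})]$ is finite-dimensional when $A$ is semisimple, and that $A_{par}$ admits a normal form $\varepsilon_{b_1}\cdots\varepsilon_{b_k}[c]$ with bounded $k$ --- are asserted as ``plausible mechanisms'' with no proof. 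The second claim is essentially known from the structure theory of $A_{par}$ in \cite{ABV} (it is a partial smash product of $B$ with $A$), so the entire content of the conjecture is concentrated in the finite-dimensionality of $B$, i.e.\ in bounding the span of arbitrary products $\varepsilon_{b_1}\cdots\varepsilon_{b_k}$. The proposed mechanism (orthogonality of the $\varepsilon_a$ across Wedderburn blocks) is not substantiated, and it is not clear how semisimplicity of $A$ as an algebra controls these elements, which are defined via the coalgebra structure and the antipode.

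The second, globalization-based route has a more structural problem. By \cite{SV3} \emph{every} partial comodule over \emph{every} Hopf algebra (with flat, in particular any, coalgebra over a field) is globalizable; in particular the irregular partial $H_4$-comodule $X=(k[z],\rho)$ of \cref{ex:irregular} also has a globalization. So the mere existence of a globalization together with complete reducibility of the global object cannot by itself yield regularity --- some additional input specific to cosemisimplicity is needed, and the step ``show that $M\cap N$ is a partial subcomodule'' is exactly the step that fails in general (the globalization is not simply an ambient global comodule containing $M$ as a subspace in a way compatible with intersections). You correctly identify that isolating the semisimplicity input is ``the heart of the conjecture'', but that heart is missing: the proposal does not prove the statement, and as far as the paper is concerned the statement remains open.
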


By \cref{le:Hfinreg}, the conjecture above is equivalent to ``If $H$ is finite dimensional and cosemisimple, then $(H^*)_{par}$ is finite dimensional.'' On the other hand, recall that $H$ is finite dimensional and cosemisimple if and only if $H^*$ is semisimple. Hence the conjecture can furthermore be reformulated as ``If $H^*$ is semisimple, then $(H^*)_{par}$ is finite dimensional''. 
In \cite[Conjecture 2.8]{ABV2} the was conjectured that if $H$ is finite dimensional and semisimple, then $H_{par}$ would also be finite dimensional and semisimple. Therefore, \cref{conj:regular} can be seen as an intermediate step towards this stronger conjecture.
If the characteristic of \(k\) is zero, then every \(H\) is semisimple if and only if it is cosemisimple (see \cite{LR}), hence in this case we expect every semisimple Hopf algebra to be regular. 


\begin{proposition} If $H$ is regular then every Hopf subalgebra of $H$ is also regular. \end{proposition}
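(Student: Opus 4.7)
The plan is to show that any partial $K$-comodule, for $K$ a Hopf subalgebra of $H$, naturally becomes a partial $H$-comodule by postcomposition with the inclusion, and that the finite dimensional partial subcomodule structure is preserved under this operation.

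First, let $\iota : K \hookrightarrow H$ denote the inclusion, which is by assumption a morphism of Hopf algebras. Given a partial $K$-comodule $(M, \rho)$, define $\tilde{\rho} = (M \otimes \iota) \rho : M \to M \otimes H$. The first step is to verify that $(M, \tilde{\rho})$ is a partial $H$-comodule. Since $\iota$ commutes with comultiplication, counit, antipode and multiplication (on elements of $K$), each of the axioms (PCM1)--(PCM3) for $\tilde{\rho}$ relative to $H$ is obtained from the corresponding axiom for $\rho$ relative to $K$ by applying a map of the form $M \otimes \iota^{\otimes n}$. Because $\iota$ is injective (hence so is $\iota^{\otimes n}$), these axioms hold for $\tilde{\rho}$ if and only if they hold for $\rho$.

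Next, I claim that the lattice of partial $H$-subcomodules of $(M, \tilde{\rho})$ coincides with the lattice of partial $K$-subcomodules of $(M, \rho)$. Indeed, a subspace $N \subseteq M$ satisfies $\tilde{\rho}(N) \subseteq N \otimes H$ if and only if $\rho(N) \subseteq N \otimes K$. One direction is immediate from the definition of $\tilde{\rho}$; the converse is obtained by fixing a basis of $K$, extending to a basis of $H$, and extracting the coefficients of $\rho(n)$ in $M \otimes K$ from those of $\tilde{\rho}(n)$ in $M \otimes H$. Thus the finite dimensional partial $H$-subcomodules of $(M, \tilde{\rho})$ are precisely the finite dimensional partial $K$-subcomodules of $(M, \rho)$.

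Finally, by the regularity of $H$, the partial $H$-comodule $(M, \tilde{\rho})$ is the sum of its finite dimensional partial $H$-subcomodules. By the correspondence just established, this exhibits $(M, \rho)$ as the sum of its finite dimensional partial $K$-subcomodules, proving that $K$ is regular. No real obstacle is expected here; the only slightly technical point is the careful (but essentially routine) translation of axioms (PCM1)--(PCM3) through the injective Hopf algebra map $\iota$, which I would relegate to a short verification.
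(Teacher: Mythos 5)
Your proof is correct and follows essentially the same route as the paper: both induce a partial $H$-comodule structure $(M,(M\otimes\iota)\rho)$ along the inclusion $\iota$ and exploit that (ir)regularity transfers across this construction (the paper phrases it contrapositively and leaves the verification as "one easily verifies"). Your explicit identification of the two subcomodule lattices is exactly the detail the paper's proof implicitly relies on, and it is valid since over a field $(N\otimes H)\cap(M\otimes \iota(K))=N\otimes\iota(K)$.
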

\begin{proof}
    Let \(H'\) be a Hopf subalgebra of \(H\) and suppose \(H'\) is not regular. Then there exists a nonregular partial \(H'\)-comodule \((V, \rho)\). Denote by \(\iota: H' \to H\) be the inclusion, which is a Hopf algebra morphism.  As we work over a field, one easily verifies that \((V, (V \otimes \iota) \rho)\) is an irregular partial \(H\)-comodule. 
\end{proof}

It is known that for any Lie algebra $\mathfrak g$, all partial modules of the Hopf algebra $U(\mathfrak g)$ are global (see \cite[Example 4.4]{ABV}). The following example shows that the case of quantum enveloping algebras is much more complicated.

\begin{example}\label{ex:quantum}
    Let \(q \neq \pm 1\) and consider \(U_q(\mathfrak{sl}_2)\). This Hopf algebra is generated by \(E, F, K\) and \(K^{-1}\) subject to relations
    \begin{gather*}
        KEK^{-1} = q^2 E, \quad KFK^{-1} = q^{-2} F, \quad EF-FE = \frac{K - K^{-1}}{q - q^{-1}} \\
        \Delta(E) = E \otimes 1 + K \otimes E, \quad \Delta(F) = F \otimes K^{-1} + 1 \otimes F, \quad \Delta(K) = K \otimes K \\
        \epsilon(E) = \epsilon(F) = 0, \quad \epsilon(K) = 1 \\
        S(E) = -K^{-1} E, \quad S(F) = -FK, \quad S(K) = K^{-1}.
    \end{gather*}
    If \(q^{2n} = 1\) (\(n > 1\)), then there is surjective Hopf algebra morphism $U_q(\mathfrak{sl}_2)\to T_n$, the Taft algebra. The Taft algebras are self-dual, so we find a Hopf algebra morphism $T_n \to U_q(\mathfrak{sl}_2)^\circ$. This map is injective (since it is the corestriction of the linear map \(T_n \cong T_n^* \to U_q(\mathfrak{sl}_2)^*\)). Remark that \(T_2 = H_4\), the Sweedler Hopf algebra, which is irregular by \cref{ex:irregular}. This shows that when \(q = i\), \(U_q(\mathfrak{sl}_2)^\circ\) is an example of an (infinite dimensional) irregular Hopf algebra. 
    In particular, there exist non-global partial comodules for \(U_q(\mathfrak{sl}_2)^\circ\) and in a similar way there exist (in contrast to the classical case) non-global partial modules of \(U_q(\mathfrak{sl}_2)\): one can precompose a partial comodule of \(H_4\) with the Hopf algebra projection \(U_q(\mathfrak{sl}_2) \to H_4\) to obtain a partial module with action
    \[U_q(\mathfrak{sl}_2) \otimes M \to H_4 \otimes M \to M.\]
\end{example}





\section{Partial representations of linear algebraic groups}
\label{se:algebriacgroups}

\subsection{Partial representations versus partial comodules over the coordinate algebra}

Let \(\Alg_k\) be the category of unital, associative, commutative \(k\)-algebras. 
Recall that a linear algebraic group is \(G\) is in fact a functor
	\[\mathbb{G} : \Alg_k \to \Grp\]
	such that the underlying \(k\)-functor
	\[\overline{\G} : \Alg_k \to \Set\]
	is represented by a finitely generated commutative \(k\)-algebra that is called the coordinate algebra and is denoted by \(\mathcal{O}(G)\). 
	
	Let \(V\) be a vector space and consider the functor
\[\End_V : \Alg_k \to \Mon\]
given on objects by 
\[\End_V(A) = \End_A(V \otimes A) \cong \Hom_k(V, V \otimes A)\]
and on morphisms by \(\End_V(f) : \Hom_k(V, V \otimes A) \to \Hom_k(V, V \otimes B) : \psi \mapsto (V \otimes f)\psi\). The corresponding \(\Set\)-valued functor is denoted by \(\overline{\End}_V\).

By the Yoneda Lemma, 
\begin{equation}
	\label{eq:yoneda}
	\Nat(\overline{\G}, \overline{\End}_V) \cong \overline{\End}_V(\mathcal{O}(G)) \cong \Hom_k(V, V \otimes \mathcal{O}(G)).
\end{equation}

\begin{definition}
	A natural transformation \(\alpha : \overline{\G} \Rightarrow \overline{\End}_V\) is a partial representation of \(G\) on \(V\) if for any \(A \in \Alg_k\) and \(x, y \in \G(A)\)
	\begin{enumerate}[(PR1)]
		\item \(\alpha_A(1_{\G(A)}) = {V \otimes A}\) (as elements in \(\End_A(V \otimes A)\));
		\item \(\alpha_A(x^{-1}) \alpha_A(x) \alpha_A(y) =  \alpha_A(x^{-1}) \alpha_A(xy)\);
		\item \(\alpha_A(x) \alpha_A(y) \alpha_A(y^{-1}) = \alpha_A(xy) \alpha_A(y^{-1})\).
	\end{enumerate}
	
\end{definition}

\begin{lemma}\label{le:parrepalg}
	A natural transformation \(\alpha : \overline{\G} \Rightarrow \End_V\) is partial representation of \(G\) if and only if the corresponding linear map under (\ref{eq:yoneda}) \(\rho_\alpha : V \to V \otimes \mathcal{O}(G)\) makes \(V\) into a partial \(\mathcal{O}(G)\)-comodule. 
\end{lemma}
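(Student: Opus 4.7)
The plan is to unpack the Yoneda correspondence \eqref{eq:yoneda} explicitly and then read each (PR$i$) axiom as a statement about $\rho_\alpha$, matching it to one or more (PCM$j$) axioms.

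Write $H := \OG$ and $\rho_\alpha := \alpha_H(\id_H)$. Naturality forces
\[
\alpha_A(x) = (V \ot x)\rho_\alpha \qquad \text{for every } A \in \Alg_k,\ x \in \G(A) = \Hom_{\Alg_k}(H,A).
\]
The standard dictionary gives $1_{\G(A)} = u_A \circ \epsilon$, $x^{-1} = x \circ S$, $xy = \mu_A(x \ot y)\Delta$, and a routine calculation with the isomorphism $\End_A(V \ot A) \cong \Hom_k(V, V \ot A)$ shows that any composition corresponds to
\[
\alpha_A(x_1)\circ \cdots \circ \alpha_A(x_n) \ \longleftrightarrow\ (V \ot \mu_A^{(n-1)})(V \ot x_1 \ot \cdots \ot x_n)\rho^n.
\]

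For the ``only if'' direction, I would substitute well-chosen universal test data. Taking $A = k$ reduces (PR1) to (PCM1) at once. For the partial coassociativity, set $A = H \ot H$, which is a commutative algebra since $H = \OG$ is commutative. The canonical inclusions $\iota_1, \iota_2 : H \to H \ot H$ and their composites with $S$ are then algebra morphisms (using that $S$ is an algebra morphism on the commutative Hopf algebra $H$ and that $S^2 = \id$). The choices $(x, y) = (\iota_1, \iota_2)$ in (PR2) and (PR3) produce (PCM5) and (PCM2) respectively, while $(x, y) = (\iota_1 \circ S, \iota_2)$ in (PR2) and $(x, y) = (\iota_1, \iota_2 \circ S)$ in (PR3) produce (PCM3) and (PCM4). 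Combined with (PCM1), this exhibits $(V, \rho_\alpha)$ as a partial $H$-comodule.

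Conversely, assume $(V, \rho_\alpha)$ is a partial $H$-comodule. Then (PR1) follows from (PCM1) by post-composing with $V \ot u_A$. For (PR2), the fact that $x : H \to A$ is an algebra morphism yields the key factorisation
\[
(V \ot \mu_A^{(2)})(V \ot (x \circ S) \ot x \ot y) = (V \ot \mu_A)(V \ot x \ot y)\,(V \ot \mu \ot H)(V \ot S \ot H \ot H),
\]
so applying both sides to $\rho^3 - (V \ot H \ot \Delta)\rho^2$ and invoking (PCM5)---which is available because (PCM2)+(PCM3) is equivalent to (PCM4)+(PCM5) by \cite[Lemma 3.3]{coreps}---yields (PR2). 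The analogous factorisation exploiting that $y$ is an algebra morphism derives (PR3) from (PCM2).

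The main difficulty I anticipate is tensor bookkeeping: one must track carefully which of the three $H$-factors in $\rho^3$ or in $(V \ot H \ot \Delta)\rho^2$ carries $S$, $\mu$, or $\Delta$ in each (PCM) axiom, and verify that the universal choices above isolate the intended axiom rather than a flipped version. Once this dictionary is laid out, both directions reduce to short manipulations of tensor factors using only the algebra-morphism property of $x$ and $y$.
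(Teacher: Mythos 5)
Your proposal is correct and follows essentially the same route as the paper: both unpack the Yoneda dictionary $\alpha_A(x)=(V\otimes x)\rho_\alpha$, use the algebra-morphism property of $x,y$ to factor the (PR2)/(PR3) identities through the partial coassociativity axioms, and obtain the converse implication by testing on the universal algebra $A=\OG\otimes\OG$ with the two canonical inclusions. The only (cosmetic) difference is organizational: the paper establishes the single equivalences (PR2)$\Leftrightarrow$(PCM3) and (PR3)$\Leftrightarrow$(PCM2) after swapping $x\leftrightarrow x^{-1}$, while you extract all four of (PCM2)--(PCM5) by also inserting $S$ into the test morphisms (using $S^2=\id$ on the commutative Hopf algebra $\OG$), which is equally valid.
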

\begin{proof}
    The check that \(\alpha\) satisfies (PR1) if and only if \(\rho_\alpha\) is counital is the same as in the global case.

For the partial associativity conditions, let us check that $\alpha$ satisfies (PR2) if and only if the corresponding linear map $\rho_{\alpha}$ satisfies
\begin{eqnarray}\label{pcomodcondition}
& \, & (V\otimes \mu \otimes \OG) (V\otimes \OG \otimes S\otimes \OG ) (\rho_{\alpha} \otimes \OG \otimes \OG) (\rho_{\alpha} \otimes \OG ) \rho_{\alpha} \nonumber \\
& = &(V\otimes \mu \otimes \OG) (V\otimes  \OG \otimes S\otimes \OG ) (V\otimes \OG \otimes \Delta_{\OG}) (\rho_{\alpha} \otimes \OG ) \rho_{\alpha} .
\end{eqnarray}
For every $A \in \Alg_k$ and every $x,y \in \G(A) \Hom_{\Alg_k} (\OG , A)$ we have
\begin{eqnarray*}
\alpha_A (x) \alpha_A (x^{-1}) \alpha_A (y) & = & (V\otimes \mu_A )(V\otimes \mu_A \otimes A) (V\otimes x \otimes A \otimes A) (\rho_{\alpha} \otimes A \otimes A) \\
& \,  &  (V \otimes x^{-1} \otimes A)  (\rho_{\alpha} \otimes A)  (V\otimes y) \rho_{\alpha} \\
& = & (V\otimes \mu_A )  (V\otimes \mu_A \otimes A) (V\otimes x \otimes x^{-1} \otimes y) \\
& \, &  (\rho_{\alpha} \otimes \OG \otimes \OG ) (\rho_{\alpha} \otimes \OG )  \rho_{\alpha} \\
& = & (V\otimes \mu_A )  (V\otimes \mu_A \otimes A) (V\otimes x \otimes x \otimes y)  (V \otimes \OG \otimes S \otimes \OG) \\
& \, &  (\rho_{\alpha} \otimes \OG \otimes \OG ) (\rho_{\alpha} \otimes \OG)  \rho_{\alpha} \\
& = & (V\otimes \mu_A )  (V\otimes x \otimes y)  (V\otimes \mu_{\OG} \otimes \OG)  (V \otimes \OG \otimes S \otimes \OG) \\
& \, &  (V \otimes \OG \otimes \Delta_{\OG} ) (\rho_{\alpha} \otimes \OG)  \rho_{\alpha} .
\end{eqnarray*}
On the other hand,
\begin{eqnarray*}
\alpha_A (x) \alpha_A (x^{-1}y) & = & (V\otimes \mu_A ) (V\otimes x \otimes A) (\rho_{\alpha} \otimes A)  (V\otimes x^{-1}y) \rho_{\alpha} \\
& = & (V\otimes \mu_A )  (V\otimes x \otimes x^{-1}y)  (\rho_{\alpha} \otimes \OG )  \rho_{\alpha} \\
& = & (V\otimes \mu_A )  (V\otimes A \otimes \mu_A ) (V\otimes x \otimes x^{-1} \otimes y)  (V\otimes \OG \otimes \Delta_{\OG}) \\
& \,  &  (\rho_{\alpha} \otimes \OG )  \rho_{\alpha} \\
& = & (V\otimes \mu_A )  (V\otimes \mu_A \otimes A) (V\otimes x \otimes x \otimes y)  (V \otimes \OG \otimes S \otimes \OG) \\
& \, &  (V \otimes \OG \otimes \Delta_{\OG} ) (\rho_{\alpha} \otimes \OG)  \rho_{\alpha} \\
& = & (V\otimes \mu_A )  (V\otimes x \otimes y)  (V\otimes \mu_{\OG} \otimes \OG)  (V \otimes \OG \otimes S \otimes \OG) \\
& \, &  (V \otimes \OG \otimes \Delta_{\OG} ) (\rho_{\alpha} \otimes \OG)  \rho_{\alpha} .
\end{eqnarray*}
Therefore (by replacing \(x\) by \(x^{-1}\) in the above calculation), $\alpha_A (x^{-1}) \alpha_A (x) \alpha_A (y)=\alpha_A (x^{-1}) \alpha_A (xy)$ for any $A \in \Alg_k$ if and only if
\begin{eqnarray*}
& \, & (V\otimes \mu_A )  (V\otimes x \otimes y)  (V\otimes \mu_{\OG} \otimes \OG)  (V \otimes \OG \otimes S \otimes \OG) \\
& \, &  (\rho_{\alpha} \otimes \OG \otimes \OG ) (\rho_{\alpha} \otimes \OG)  \rho_{\alpha} \\
& = & (V\otimes \mu_A )  (V\otimes x \otimes y)  (V\otimes \mu_{\OG} \otimes \OG)  (V \otimes \OG \otimes S \otimes \OG) \\
& \, &  (V \otimes \OG \otimes \Delta_{\OG} ) (\rho_{\alpha} \otimes \OG)  \rho_{\alpha} .
\end{eqnarray*}
To see that this implies the identity (\ref{pcomodcondition}), one can choose $A=\OG \otimes \OG$, $x=j_1 :\OG \rightarrow \OG\otimes \OG$ defined as $j_1 (f)=f\otimes 1$, and $y=j_2 :\OG \rightarrow \OG\otimes \OG$ defined as $j_2 (f)=1\otimes f$. 

Analogously, one can check that $\alpha_A (x)\alpha_A (y) \alpha_A (y^{-1})=\alpha_A (xy) \alpha_A (y^{-1})$ for any $A \in \Alg_k$ and any algebra morphisms $x,y:\OG \rightarrow A$ if and only if the corresponding linear map $\rho_{\alpha}$ satisfies
\begin{eqnarray*}
& \, & (V\otimes \OG \otimes \mu)  (V\otimes \OG \otimes \OG \otimes  S) (\rho_{\alpha} \otimes \OG \otimes \OG ) (\rho_{\alpha} \otimes \OG)  \rho_{\alpha} \\
& = & (V\otimes \OG \otimes \mu)  (V\otimes \OG \otimes \OG \otimes S) (V\otimes \Delta \otimes \OG ) (\rho_{\alpha} \otimes \OG)  \rho_{\alpha}
\end{eqnarray*}
Therefore, partial representations of a linear algebraic group $G$ are in one-to-one correspondence with partial comodules over the coordinate algebra $\OG$.
\end{proof}

    
\subsection{Connected linear algebraic groups}

From now until the end of \cref{se:algebriacgroups}, let \(k\) be an algebraically closed field of characteristic 0.

Suppose that \(G\) is connected. We will show that every partial representation of \(G\) is in fact global. To do this, we need a dual pairing between the coordinate algebra \(\OG\) and the universal enveloping algebra of its Lie algebra \(U(\mathfrak{g})\). 

By \cite[9.2.5]{montgomery}, 
\[U(\mathfrak{g}) \cong H' := \{f \in H^\circ \mid f((\ker \epsilon)^n) = 0 \text{ for some } n > 0\}.\]
Moreover, for a finitely generated \(k\)-algebra \(H\), \(H'\) is dense in \(H^*\) if and only if \(\bigcap_{n \geq 0} (\ker \epsilon)^n = 0\). 

By \cite[4.6.4]{abe}, the coordinate algebra of a connected affine algebraic group satisfies this property. This implies the existence of a nondegenerate dual pairing between \(U(\mathfrak{g})\) and \(\OG\). Explicitely, it is given by 
\[\langle \cdot, \cdot \rangle : U(\mathfrak{g}) \otimes \OG \to k : \langle d_1 \cdots d_n, x \rangle = \epsilon(D_1 \cdots D_n(x))\]
where \(D_1, \dots, D_n \in \mathfrak{g}\) are left-invariant derivations on \(\OG\) and \(d_1, \dots, d_n\) are the corresponding generators of \(U(\mathfrak{g})\). One can check that this is a non-degenerate Hopf algebra pairing. 

Following the proof of \cite[Theorem 4.14]{coreps}, \((M, \rho)\) is a partial \(\OG\)-comodule if and only if the corresponding map \[\lambda : U(\mathfrak{g}) \otimes M \to M : d \otimes m \mapsto m^{(0)} \langle d, m^{(1)} \rangle\]
makes \(M\) into a partial \(U(\mathfrak{g})\)-module. But every partial \(U(\mathfrak{g})\)-module is global (see \cite[4.4]{ABV}) hence, by non-degeneracy of the pairing, \((M, \rho)\) is a global \(\OG\)-module.
We have shown the following theorem.
\begin{theorem}
    \label{th:connectedgroup}
    Let \(k\) be an algebraically closed field of characteristic \(0\) and let \(G\) be a linear algebraic \(k\)-group. If \(G\) is connected, then every partial representation of \(G\) is global.
\end{theorem}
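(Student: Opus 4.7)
The plan is to reduce the problem to a known fact about partial modules over universal enveloping algebras, by exploiting the classical duality between $\OG$ and $U(\mathfrak{g})$. By \leref{parrepalg} it suffices to show that every partial $\OG$-comodule $(M,\rho)$ is a global comodule, so the task is to convert partial coactions of $\OG$ into partial actions of $U(\mathfrak{g})$ in a faithful way, and then apply the triviality result for partial $U(\mathfrak{g})$-modules from \cite[Example 4.4]{ABV}.

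The first step is to realise $U(\mathfrak{g})$ inside the finite dual $\OG^\circ$. Following Montgomery, $U(\mathfrak{g})$ is isomorphic to the subalgebra $H'=\{f\in\OG^\circ\mid f((\ker\epsilon)^n)=0\text{ for some }n>0\}$. The second step, where the connectedness hypothesis really enters, is to verify that $H'$ is dense in $\OG^*$. This follows from the criterion that density holds precisely when $\bigcap_{n\geq 0}(\ker\epsilon)^n=0$, together with the algebro-geometric fact (from Abe's book) that the maximal ideal at the identity of the coordinate algebra of a connected affine algebraic group has trivial intersection of its powers.

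Once density is established, \thref{duality} (i.e.\ \cite[Theorem 4.14]{coreps}) applies, producing from any partial $\OG$-comodule $(M,\rho)$ a partial $U(\mathfrak{g})$-module structure $\lambda(d\otimes m)=m^{(0)}\langle d,m^{(1)}\rangle$, where $\langle\cdot,\cdot\rangle$ is the induced nondegenerate Hopf pairing. Since every partial $U(\mathfrak{g})$-module is global by \cite[Example 4.4]{ABV}, the map $\lambda$ is an honest action. Nondegeneracy of the pairing then forces the partial coassociativity axioms (PCM2)--(PCM3) to upgrade to genuine coassociativity of $\rho$, which means $(M,\rho)$ is a global $\OG$-comodule and therefore corresponds to a global representation of $G$.

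The main obstacle is not conceptual but bibliographic/technical: pinning down the density statement $\bigcap_n(\ker\epsilon)^n=0$ for the coordinate algebra of a connected linear algebraic group, since this is exactly the place where ``connected'' is used and where the nontrivial input from commutative algebra / algebraic geometry enters. Everything after that step is a direct transport across the duality \thref{duality}, using results already available in the literature.
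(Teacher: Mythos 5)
Your proposal follows exactly the paper's argument: realise $U(\mathfrak{g})$ as the irreducible component $H'$ of $\OG^\circ$ via Montgomery, use Abe's result that $\bigcap_n(\ker\epsilon)^n=0$ for a connected group to get density of $H'$ in $\OG^*$ and hence a nondegenerate pairing, transport the partial comodule to a partial $U(\mathfrak{g})$-module via (the proof of) \cref{thm:duality}, and invoke \cite[Example 4.4]{ABV} together with nondegeneracy to conclude globality. This is correct and is essentially the same proof as in the paper.
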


\begin{example}
    Consider the additive group, i.~e.~the \(k\)-group functor which associates the group \((A, +)\) to each commutative \(k\)-algebra \(A\). Its coordinate algebra is \(k[t]\), where \(t\) is a primitive element; \(\Delta(t) = 1 \otimes t + t \otimes 1\). This algebra is a domain, hence the additive is connected. From the discussion above, we can conclude that every partial comodule over \(k[t]\) is global. Remark that \(k[t]\) is exactly the enveloping algebra of the trivial Lie algebra \(k\). We already knew that this Hopf algebra does not have true partial \textit{modules} either. 
\end{example}

\begin{example}
    Consider the multiplicative group, i. e.
    \[\G(A) = (A^\times, \cdot)\]
    for any commutative \(k\)-algebra \(A\). Then \(\OG\) is the algebra of Laurent polynomials \(k[t, t^{-1}]\), where \(t\) is grouplike. This algebra is again a domain, hence \(G\) is connected and \(k[t, t^{-1}]\) does not admit partial comodules other than global ones. Remark that \(\OG\) is isomorphic (as a Hopf algebra) to the group algebra \(k\Z\).
\end{example}

\cref{th:connectedgroup} has an easy corollary:
\begin{corollary}
     Let \(k\) be an algebraically closed field of characteristic \(0\) and let \(G\) be a linear algebraic \(k\)-group. If \(G\) is connected, then \(\OG\) is regular.
\end{corollary}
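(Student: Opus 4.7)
The plan is to deduce the corollary directly from \cref{th:connectedgroup} together with the fundamental theorem of comodules. Since regularity of $\OG$ means that every partial $\OG$-comodule is the sum of its finite dimensional partial subcomodules, and the theorem just proved tells us that every partial $\OG$-comodule is already a global one, the work amounts to transferring the well-known global result into the partial setting.

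First, I would start with an arbitrary partial $\OG$-comodule $(M,\rho)$. By \cref{th:connectedgroup}, the map $\rho:M \to M\ot \OG$ is a genuine (global) $\OG$-coaction, i.e.\ it satisfies $(\rho \ot \OG)\rho = (M\ot \Delta)\rho$ in addition to counitality. Next, I would invoke the fundamental theorem of comodules: every global $\OG$-comodule is the sum (equivalently, the filtered union) of its finite dimensional global subcomodules $\{M_i\}_{i\in I}$, so $M = \sum_{i\in I} M_i$.

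The final step is the observation that each $M_i$ is in particular a partial subcomodule of $(M,\rho)$, simply because $\rho(M_i)\subseteq M_i \ot \OG$ already holds at the global level, and the restriction of $\rho$ to $M_i$ trivially inherits the partial coaction axioms (PCM1)--(PCM3) from those of $(M,\rho)$. Hence $M$ is the sum of its finite dimensional partial subcomodules, showing that $M$ is regular; since $M$ was arbitrary, $\OG$ is regular.

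I do not expect any genuine obstacle here: the only point to verify carefully is that ``partial subcomodule'' in the sense of \cref{se:regular} agrees, for a globally coacting $(M,\rho)$, with ``global subcomodule'', which is immediate from the definitions. Everything else is a direct appeal to results already established in the paper (\cref{th:connectedgroup}) and the classical fundamental theorem of comodules.
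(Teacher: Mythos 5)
Your proof is correct and is exactly the argument the paper has in mind (the paper states the corollary without proof, calling it easy): apply \cref{th:connectedgroup} to see every partial $\OG$-comodule is global, then the fundamental theorem of comodules, and observe that finite dimensional global subcomodules are in particular partial subcomodules. Nothing to add.
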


In fact, we expect \(\OG\) to be regular for any linear algebraic group \(G\) over \(k\). We will not prove this in detail in this article, but we will motivate this claim and give some concrete examples. 

\begin{remark} \label{inductionofpartialreps} For any linear algebraic group \(\G\), its connected component \(\G^\circ\) is a connected normal subgroup, and the corresponding quotient is finite (i.~e.~its coordinate algebra is finite dimensional). The article \cite{DHSV} studies partial representations of a finite group \(G\) for which the restriction to a subgroup \(H\) is a global representation. Irreducible \(H\)-global \(G\)-partial representations are constructed by taking a subset \(A\) of \(G/H\) that contains \(H\). A (global) irreducible representation \(V\) of the subgroup \(K = \{g \in G \mid gA = A\}\) induces a representation \(kG \otimes_{kK} V\) of \(G\), which can be restricted to an irreducible \(H\)-global \(G\)-partial representation
\[\bigoplus_{g_iK \in A/K} V^{g_i} \subseteq \bigoplus_{g_iK \in G/K} V^{g_i} = kG \otimes_{kK} V.\]
Here, each $V^{g_i}$ is isomorphic to $V$ and is equal to $k(g_i K) \otimes_{kK} V$. The isomorphism $\phi_{i} :V\rightarrow V^{g_i}$ can be written explicitely as $\phi_i (v)=g_i \otimes_{kK} v$.
Denoting by $\pi :K\rightarrow \mathsf{GL}(V)$ the global representation of $K$, we have the partial representation of $G$, $\widetilde{\pi}:G\rightarrow \text{End}_k \left( \bigoplus_{g_iK \in A/K} V^{g_i} \right)$, given by
\[
\widetilde{\pi}(g) (\phi_i (v))=\left\{  \begin{array}{lcl} \phi_j (\pi (k)(v)) & \text{ if } & gg_i =g_j k, \quad \text{ for } k\in K, \text{ and } g_j K\in A/K \\
0 & & \text{otherwise}  ,\end{array}\right.
\]
in which $A/K$  denotes the set of left cosets of $K$ contained in $A$. Remark that if \(H \trianglelefteq G\), then \(H \leq K \leq G\). Since the techniques in \cite{DHSV} depend on the finiteness of the quotient \(G/H\) and less on the size of \(G\) itself, we can expect that partial representations of a linear algebraic group \(G\) can be built out of global representations of a subgroup that contains \(G^\circ\) and the action of \(G\) on the finite set \(\mathcal{P}(G/G^\circ)\). 
\end{remark}

\subsection{A trivial partial representation of linear algebraic groups} \label{subs:trivial} For a linear algebraic group \(G\), the group of connected components \(G/G^\circ\) is finite and has coordinate algebra \(\pi_0(G)\) which is a Hopf subalgebra of \(\OG =: H\). It is separable and hence as an algebra isomorphic to \(k^{n + 1}\) for some \(n\). Denote by \(\{e_0, \dots, e_n\}\) a complete system of orthogonal idempotents and let \(e_0\) be the idempotent that is sent to \(1\) under the counit.

Now (see \cite[Section 6.7]{Waterhouse}) \(\mathcal{O}(G^\circ) =: H_+\) is isomorphic to \(e_0 \OG\) and the projection 
\[\pi : H \to H_+ : h \mapsto e_0 h\]
is a Hopf morphism because its kernel is the Hopf ideal \((\ker \varepsilon_{\pi_0(G)}) H\). This ideal is in fact \(e_1 H \times \cdots \times e_n H\), let us denote it by \(H_-\). The section 
\[\iota : H_+ \to H\] 
is multiplicative but it is an algebra nor a coalgebra morphism. By counitality in \(\pi_0(G)\), we have that
\[\Delta_H(e_0) \in e_0 \otimes e_0 + H_- \otimes H_-\]
and it follows that 
\[\Delta_H(H_+) \subseteq H_+ \otimes H_+ + H_- \otimes H_-.\]

Since \(\pi_0(G)\) is a finite dimensional commutative semisimple Hopf algebra, it is isomorphic to \(k(G/G^\circ)^*\) and \(e_0, \dots, e_n\) is exactly the basis dual to \(G/G^\circ\); hence \(S(e_0) = e_0\). 

\begin{lemma}
Let \((M, \rho)\) be a comodule over \(H_+\) and consider
\[\tilde{\rho} = (M \otimes \iota) \rho : M \to M \otimes H_+ \to M \otimes H.\]
Then \((M, \tilde{\rho})\) is a partial comodule over \(H\). 
\end{lemma}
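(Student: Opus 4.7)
My plan is to verify the three axioms (PCM1), (PCM2), (PCM3) directly, exploiting the idempotent splitting $H = H_+ \oplus H_-$ coming from $\pi_0(G)$. Counitality (PCM1) is essentially free: since $\pi : H \to H_+$ is a Hopf morphism and $\pi \iota = \id_{H_+}$, one has $\varepsilon_H \iota = \varepsilon_{H_+} \pi \iota = \varepsilon_{H_+}$, so
\[(M \otimes \varepsilon_H) \tilde{\rho} = (M \otimes \varepsilon_H \iota) \rho = (M \otimes \varepsilon_{H_+}) \rho = M\]
by counitality of $\rho$ as an $H_+$-coaction.

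Before tackling partial coassociativity, I would isolate three auxiliary facts that do the real work: (a) $H_+ \cdot H_- = 0$ inside $H$, since $H_+ = e_0 H$, $H_- = (1-e_0)H$ and $H$ is commutative, so $e_0(1-e_0)=0$; (b) $S_H(H_\pm) \subseteq H_\pm$, and $S_H|_{H_+}$ coincides with $S_{H_+}$ (the inclusion follows from $\pi S_H = S_{H_+} \pi$ and $S_H(e_0)=e_0$, and the identification uses $\pi \iota = \id$); and (c) writing $\Delta_H \iota = (\iota\otimes\iota)\Delta_{H_+} + \beta$, the defect $\beta(h)$ lies in $H_- \otimes H_-$ for every $h \in H_+$ — this is exactly the statement $\Delta_H(H_+) \subseteq H_+ \otimes H_+ + H_- \otimes H_-$ paired with $(\pi \otimes \pi)\Delta_H\iota = \Delta_{H_+}$.

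For (PCM2), the right-hand side iterates $\tilde\rho$ three times, so every tensor slot lies in $\iota(H_+)$. Using multiplicativity of $\iota$, fact (b), coassociativity of $\rho$ and the antipode axiom in $H_+$, the RHS collapses to $m^{(0)} \otimes m^{(1)} \otimes \iota(1_{H_+}) = m^{(0)} \otimes m^{(1)} \otimes e_0$. The left-hand side is where the defect appears: applying $\Delta_H$ to $\iota(m^{(1)(1)})$ produces $(\iota\otimes\iota)\Delta_{H_+}(m^{(1)(1)}) + \beta(m^{(1)(1)})$ by (c). In the $\beta$-summand, the third tensor factor lies in $H_-$ by construction, and it then gets multiplied with $S_H(\iota(m^{(1)(2)})) \in H_+$ by (b); this product vanishes by (a). What remains is the "clean" $\Delta_{H_+}$-piece, which by the same antipode computation as on the RHS reduces to $m^{(0)} \otimes m^{(1)} \otimes e_0$. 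Axiom (PCM3) is handled by a mirror-image argument: now the defect's first factor is in $H_-$ and gets multiplied against $\iota(m^{(1)(1)}) \in H_+$, annihilating by (a), and both sides collapse to $m^{(0)} \otimes e_0 \otimes m^{(1)}$.

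The main obstacle is not conceptual but notational: one must track carefully which Sweedler index of the iterated $\tilde\rho$ ends up multiplied against the antipode image, in order to be sure the $H_- \otimes H_-$ defect is always paired with a factor in $H_+$ at precisely the right slot. Once the bookkeeping is set up (say, by writing $h = m^{(1)}$ and expanding $\Delta_{H_+}^2(h)$ in both possible coassociative orders), the orthogonality $H_+ \cdot H_- = 0$ does all the heavy lifting and the remaining manipulations are those of a standard global coassociativity plus antipode verification inside $H_+$.
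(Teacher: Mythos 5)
Your proof is correct and follows essentially the same route as the paper's: both arguments rest on the containment $\Delta_H(H_+) \subseteq H_+ \otimes H_+ + H_- \otimes H_-$, the stability of $H_{\pm}$ under the antipode (via $S(e_0)=e_0$), and the orthogonality $H_+ H_- = 0$, after which the surviving term reduces to the global coassociativity of $\rho$ over $H_+$. The only difference is organizational: the paper kills the $H_-\otimes H_-$ component by commuting the idempotent $e_0$ out of $S(e_0 m^{(1)})$ and absorbing it into $\Delta_H(e_0)$, whereas you isolate that component as an explicit defect $\beta$ and annihilate it directly against a factor in $H_+$.
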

\begin{proof}
For the counitality, remark that \(\epsilon_H(h) = \epsilon_{H_+}(\pi(h))\) for every \(h \in H\), hence \(\epsilon_H(\iota(h')) = \epsilon_{H_+}(h')\) for all \(h' \in H_+\). For \ref{PCM2}, 
\begin{gather*}
    (M\otimes H \otimes \mu)(M\otimes H \otimes H \otimes S)(M\otimes \Delta_H \otimes H) (\tilde\rho \otimes H )\tilde\rho(m) \\ = m^{(0)(0)} \otimes (e_0)_{(1)} {m^{(0)(1)}}_{(1)}  \otimes (e_0)_{(2)} {m^{(0)(1)}}_{(2)}   S(e_0 m^{(1)}) \\
    = m^{(0)(0)} \otimes (e_0)_{(1)} {m^{(0)(1)}}_{(1)}  \otimes e_0 (e_0)_{(2)} {m^{(0)(1)}}_{(2)}   S(m^{(1)}) \\
    = m^{(0)(0)} \otimes e_0 {m^{(0)(1)}}_{(1)}  \otimes e_0 {m^{(0)(1)}}_{(2)}  S(m^{(1)}) \\
    = m^{(0)(0)} \otimes \pi( {m^{(0)(1)}}_{(1)})  \otimes \pi( {m^{(0)(1)}}_{(2)})  S(m^{(1)}).
\end{gather*}
Remark now that \(\Delta_{H_+}(\pi(h)) = \pi(h_{(1)}) \otimes \pi(h_{(2)})\) for all \(h \in H\) because \(\pi\) is a Hopf morphism. The elements \(m^{(0)(1)}\) are in \(H_+\), hence 
\begin{gather*}
    m^{(0)(0)} \otimes \pi( {m^{(0)(1)}}_{(1)})  \otimes \pi( {m^{(0)(1)}}_{(2)})  S(m^{(1)}) \\ = (M \otimes H \otimes \mu)(M \otimes \iota \otimes \iota \otimes S)(M \otimes \Delta_{H_+} \otimes H)(\rho \otimes H) \tilde \rho(m).
\end{gather*}
But \(\rho\) defines an \(H_+\) comodule structure on \(M\), hence the last expression is equal to
\begin{gather*}
    (M \otimes H \otimes \mu)(M \otimes \iota \otimes \iota \otimes S)(\rho \otimes H_+ \otimes H)(\rho \otimes H) \tilde \rho(m) \\
    = (M\otimes H \otimes \mu)(M\otimes H \otimes H \otimes S)(\tilde \rho \otimes H \otimes H) (\tilde \rho \otimes H ) \tilde \rho(m).
\end{gather*}

The proof of \ref{PCM3} is similar.
\end{proof}

\subsection{Example: the orthogonal group}\label{se:orthogonal}
Consider \(G = \mathsf{O}(n)\), the orthogonal group. Then \(G^\circ\) is \(\mathsf{SO}(n)\) and \(G/G^\circ\) has two elements (\(\pi_0(G)\) is two-dimensional). To get some intuition with this group, let us describe explicitly the coordinate algebra when \(n = 2\). 

\[\OG = k[a, b, c, d]/(a^2 + b^2 - 1, c^2 + d^2 - 1, ac + bd)\]

and
\[\mathcal{O}(G^\circ) = k[A, B]/(A^2 + B^2 - 1)\]

The quotient map is given by
\[\pi_0 : \OG \to \mathcal{O}(G^\circ) : \begin{cases}
 a \mapsto A, \\
 b \mapsto B, \\
 c \mapsto -B, \\
 d \mapsto A
\end{cases}
\]
and its multiplicative section by
\[\iota : \mathcal{O}(G^\circ) \to \OG : \begin{cases}
A \mapsto \frac{a + d}{2}, \\
B \mapsto \frac{b - c}{2}
\end{cases}
\]
The idempotents are
\begin{align*}
    e_0 &= \iota(1) = \iota(A^2 + B^2) = \frac{1 + ad - bc}{2} = \frac{1 + D}{2}, \\
    e_1 &= 1 - e_0 = \frac{1 - D}{2}
\end{align*}
where \(D = ad - bc\) is the determinant. 

The aim of this subsection is to show that any partial comodule over \(\OG\) is the direct sum of a global comodule over \(\OG\) and a partial comodule of the form described in Subsection \ref{subs:trivial}. 

Let \((V, \rho)\) be any partial comodule over \(\OG\). For an algebra map \(x : \OG \to k\) (i.~e. \(x \in \mathsf{O}(n, k)\)) we define
\begin{equation}
    \label{eq:P}
    P_x : V \to V : v \mapsto (V \otimes x \otimes x)(V \otimes S \otimes H)(\rho \otimes H)\rho (v).
\end{equation}
We claim that \(P_x\) is a projection. Indeed, for any \(v \in V\)
\begin{align*}
    P_x^2(v) &= P_x(v^{(0)(0)} x(S(v^{(0)(1)}) v^{(1)})) \\
    &= v^{(0)(0)(0)(0)} x(S(v^{(0)(0)(0)(1)}) v^{(0)(0)(1)} S(v^{(0)(1)}) v^{(1)}) \\
    \overset{\ref{PCM2}}&{=} v^{(0)(0)(0)} x(S({v^{(0)(0)(1)}}_{(1)}) {v^{(0)(0)(1)}}_{(2)} S(v^{(0)(1)}) v^{(1)}) \\
    &= v^{(0)(0)(0)} \epsilon(v^{(0)(0)(1)}) x(S(v^{(0)(1)}) v^{(1)})) \\
    \overset{\ref{PCM1}}&{=} v^{(0)(0)} x(S(v^{(0)(1)}) v^{(1)}) \\
    &= P_x(v).
\end{align*}

An algebra morphism \(x : \OG \to k\) sends either \(e_0\) to \(1\) and \(e_1\) to \(0\) or \(e_0\) to \(0\) and \(e_1\) to \(1\). In the first case \(x \iota \pi = x\) (and \(x \in \mathsf{SO}(n, k)\)), in the second case  \(x \iota \pi = 0\) (and \(x \notin \mathsf{SO}(n, k)\)). 

Let us write again \(H = \OG\), \(H_+ = \mathcal{O}(G^\circ) = e_0H\) and \(H_- = e_1H\), so that \(H = H_+ \times H_-\). 

\begin{lemma}
    \label{le:globality}
    If \(x \in \mathsf{SO}(n, k)\) and \(y \in \mathsf{O}(n, k)\), then
    \[(V \otimes x \otimes y)(\rho \otimes H)\rho = (V \otimes x \otimes y)(V \otimes \Delta_H)\rho.\]
    If \(x \in \mathsf{SO}(n, k)\), then \(P_x = V\). If \(y, y' \in \mathsf{O}(n, k) \setminus \mathsf{SO}(n, k)\), then \(P_y = P_{y'}\).
\end{lemma}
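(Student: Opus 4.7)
My plan is to translate all three statements into the language of partial representations via \leref{parrepalg} and to deduce the first and third parts from the middle one, which is the only step that requires genuine new input. Under the Yoneda-type correspondence of \leref{parrepalg}, a direct unwinding of Sweedler indices shows
\begin{align*}
(V\otimes x \otimes y)(\rho\otimes H)\rho &= \alpha_k(x)\,\alpha_k(y), \\
(V \otimes x \otimes y)(V \otimes \Delta_H)\rho &= \alpha_k(xy), \\
P_x &= \alpha_k(x^{-1})\,\alpha_k(x),
\end{align*}
where $\alpha$ is the partial representation of $G=\mathsf{O}(n)$ corresponding to $\rho$ and $xy$ denotes the group product in $\mathsf{O}(n,k)$ (equivalently, the convolution of characters in the commutative Hopf algebra $\OG$). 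Once these identifications are made, the three assertions become purely group-theoretic statements about $\alpha$ and the coset structure of $\mathsf{O}(n)/\mathsf{SO}(n)\cong\mathbb{Z}/2$.

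I would first establish the middle claim, that $P_x=V$ for $x\in\mathsf{SO}(n,k)$. Since $\mathsf{SO}(n)$ is connected, the restriction of the natural transformation $\alpha$ along $\mathsf{SO}(n)\hookrightarrow\mathsf{O}(n)$ is a partial representation of $\mathsf{SO}(n)$ and is therefore \emph{global} by \thref{connectedgroup}. Consequently $\alpha_k$ restricted to $\mathsf{SO}(n,k)$ is a group homomorphism into $\Aut(V)$, so $\alpha_k(x^{-1})\alpha_k(x)=V=\alpha_k(x)\alpha_k(x^{-1})$ for all $x\in\mathsf{SO}(n,k)$, which is exactly $P_x=V$.

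For the first claim, I apply axiom (PR2) at $A=k$, namely $\alpha_k(x^{-1})\alpha_k(x)\alpha_k(y)=\alpha_k(x^{-1})\alpha_k(xy)$; cancelling $\alpha_k(x^{-1})\alpha_k(x)=V$ on the left (using the previous paragraph) and then composing with $\alpha_k(x)$ from the left, using $\alpha_k(x)\alpha_k(x^{-1})=V$ to absorb, yields $\alpha_k(x)\alpha_k(y)=\alpha_k(xy)$, which is the desired equality. A strictly analogous argument starting from (PR3) gives the companion identity $\alpha_k(y)\alpha_k(z)=\alpha_k(yz)$ for $y\in\mathsf{O}(n,k)$ and $z\in\mathsf{SO}(n,k)$, which is what is needed below.

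For the last claim, note that since $\mathsf{O}(n)/\mathsf{SO}(n)\cong\mathbb{Z}/2$, for any $y,y'\in\mathsf{O}(n,k)\setminus\mathsf{SO}(n,k)$ the element $z:=yy'^{-1}$ lies in $\mathsf{SO}(n,k)$. Applying the two identities just proved to the factorisations $y=zy'$ and $y^{-1}=y'^{-1}z^{-1}$ gives $\alpha_k(y)=\alpha_k(z)\alpha_k(y')$ and $\alpha_k(y^{-1})=\alpha_k(y'^{-1})\alpha_k(z^{-1})$; composing these and cancelling the central block $\alpha_k(z^{-1})\alpha_k(z)=V$ produces $P_y=\alpha_k(y'^{-1})\alpha_k(y')=P_{y'}$. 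The main obstacle in this plan is the middle claim, which crucially rests on \thref{connectedgroup}; once that is in hand, the remaining two parts are formal manipulations of the partial-representation axioms and the two-element coset structure.
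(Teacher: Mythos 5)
Your proof is correct and takes essentially the same route as the paper's: invertibility of $\alpha_k(x)$ for $x\in\mathsf{SO}(n,k)$ via connectedness of $\mathsf{SO}(n)$ and \thref{connectedgroup}, then (PR2) at $A=k$ for the first claim, and the two-element coset structure of $\mathsf{O}(n)/\mathsf{SO}(n)$ for the last. The only (cosmetic) difference is that you first derive the companion identity $\alpha_k(y)\alpha_k(z)=\alpha_k(yz)$ from (PR3) and then compose factorisations of $y$ and $y^{-1}$, whereas the paper inserts $\alpha_k(yy'^{-1})\alpha_k(y'y^{-1})=V$ into $P_y$ and reshuffles with (PR2)/(PR3) directly.
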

\begin{proof}
    This is a reformulation of the fact that when \(x \in \mathsf{SO}(n, k)\), then \(\alpha_k(x)\) is invertible in \(\End_V(k)\) (because any partial representation of \(\mathsf{O}(n)\) is global on \(\mathsf{SO}(n)\) with \(\alpha_k(x)^{-1} = \alpha_k(x^{-1})\)). We have
    \[\alpha_k(x) \alpha_k(y) = \alpha_k(xy)\]
    by (PR2). In particular, \(\alpha_k(x^{-1}) \alpha_k(x) = V\). If \(y, y' \in \mathsf{O}(n, k) \setminus \mathsf{SO}(n, k)\), then \((y \otimes y'S)\Delta_H \in \mathsf{SO}(n, k)\) hence \(\alpha_k(yy'^{-1})\) is invertible. It follows that
    \begin{align*}
        \alpha_k(y^{-1}) \alpha_k(y) &= \alpha_k(y^{-1}) \alpha_k(yy'^{-1}) \alpha_k(y'y^{-1}) \alpha_k(y) \\ &= \alpha_k(y'^{-1}) \alpha_k(y'y^{-1}) \alpha_k(y) = \alpha_k(y'^{-1}) \alpha_k(y'y^{-1}) \alpha_k(yy'^{-1}y')\\
        &= \alpha_k(y'^{-1}) \alpha_k(y'y^{-1}) \alpha_k(yy'^{-1}) \alpha_k(y') = \alpha_k(y'^{-1}) \alpha_k(y'). \qedhere
    \end{align*} 
\end{proof}

\begin{corollary}
    \label{cor:centrality}
    For any \(y \in \mathsf{O}(n, k) \setminus \mathsf{SO}(n, k)\) and \(z \in \mathsf{O}(n, k)\),
    \[(V \otimes yS \otimes y \otimes z)\rho^3 = (V \otimes z \otimes yS \otimes y)\rho^3.\]
\end{corollary}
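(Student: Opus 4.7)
The plan is to translate the identity via \cref{le:parrepalg} into the language of the partial representation $\alpha$ associated to $\rho$, where it becomes the commutativity of a projection with $\alpha_k(z)$, and then resolve it by a case split.

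Unwinding $\rho^3(v) = v^{(0)(0)(0)} \otimes v^{(0)(0)(1)} \otimes v^{(0)(1)} \otimes v^{(1)}$ and using that $y \circ S$ is the convolution inverse $y^{-1}$ of the algebra morphism $y : \OG \to k$, one checks by a direct Sweedler calculation that the two sides of the claim are respectively $(P_y \circ \alpha_k(z))(v)$ and $(\alpha_k(z) \circ P_y)(v)$. By \cref{le:globality}, $P := P_y$ is independent of $y \in \mathsf{O}(n,k) \setminus \mathsf{SO}(n,k)$, so the task reduces to showing $\alpha_k(z) \circ P = P \circ \alpha_k(z)$. Applying (PR2) on the left and (PR3) on the right rewrites this as
\[\alpha_k(y^{-1})\alpha_k(yz) = \alpha_k(zy^{-1})\alpha_k(y).\]

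A useful preliminary is the symmetric extension of \cref{le:globality}: $\alpha_k(x)\alpha_k(y) = \alpha_k(xy)$ also when $y \in \mathsf{SO}(n,k)$, proved by right-cancelling the invertible $\alpha_k(y^{-1})$ in (PR3).

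The proof then splits on $z$. When $z \in \mathsf{O}(n,k) \setminus \mathsf{SO}(n,k)$, both $yz$ and $zy^{-1}$ lie in $\mathsf{SO}(n,k)$, and the extended lemma immediately collapses each side of the displayed identity to $\alpha_k(z)$. When $z \in \mathsf{SO}(n,k)$, the two products each involve two factors outside $\mathsf{SO}(n,k)$ and do not simplify individually. Here one exploits the $y$-independence of $P$: the equality $P_y = P_{yz}$ reads $\alpha_k(y^{-1})\alpha_k(y) = \alpha_k(z^{-1}y^{-1})\alpha_k(yz)$, and pre-composing with $\alpha_k(z)$, then collapsing $\alpha_k(z)\alpha_k(y^{-1}) = \alpha_k(zy^{-1})$ on the left and $\alpha_k(z)\alpha_k(z^{-1}y^{-1}) = \alpha_k(y^{-1})$ on the right via the extended lemma (legitimate because $z, z^{-1} \in \mathsf{SO}(n,k)$), produces exactly the required identity.

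The main obstacle is the case $z \in \mathsf{SO}(n,k)$: there, no single application of \cref{le:globality} suffices, since both products of interest consist of two non-$\mathsf{SO}$ factors. The key trick is to use the $y$-independence of $P$ as a bridging relation between its presentations as $P_y$ and $P_{yz}$, effectively transporting $\alpha_k(z)$ across $P$ by conjugating through its alternative form.
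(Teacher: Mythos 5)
Your proof is correct and essentially parallels the paper's: both reduce the statement to the commutation of the projection $P_y$ with $\alpha_k(z)=(V\otimes z)\rho$, split on whether $z\in\mathsf{SO}(n,k)$, and settle each case using \cref{le:globality}. The difference is one of language only — you argue with the partial-representation identities (PR2)/(PR3) together with a right-cancellation refinement of \cref{le:globality}, whereas the paper reaches the same intermediate expression $(V\otimes z)\rho P_{yz}$ by applying \ref{PCM5} and \ref{PCM3} to $\rho^3$ directly.
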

\begin{proof}
    Since for \(z' = zS\) either \(P_{z'} = V\) or \(P_{z'} = P_y\), we have
    \begin{align*}
        (V \otimes yS \otimes y \otimes z)\rho^3 &= (V \otimes z \otimes z \otimes y \otimes y \otimes z)(V \otimes H \otimes S \otimes S \otimes H)\rho^5 \\
        \overset{\ref{PCM5}}&{=} (V \otimes z \otimes z \otimes y \otimes y \otimes z)(V \otimes H \otimes S \otimes S \otimes \Delta_H) \rho^4 \\
        \overset{\ref{PCM3}}&{=} (V \otimes z \otimes z \otimes y \otimes y \otimes z)(V \otimes H \otimes S \otimes S \otimes H \otimes H)(V \otimes \Delta_H \otimes \Delta_H) \rho^3.
    \end{align*}
    This last expression is exactly \((V \otimes z) \rho P_u\) where \(u = (y \otimes z) \Delta_H\). If \(z \in \mathsf{SO}(2, k)\), then \(u \notin \mathsf{SO}(2, k)\) and \(P_u = P_y\) by \cref{le:globality}. In the other case, \(P_u = V\), but since \[(V \otimes z)\rho = (V \otimes z)\rho P_z = (V \otimes z)\rho P_y,\] we are done in both cases.
\end{proof}

 Fix now \(y \in \mathsf{O}(n, k) \setminus \mathsf{SO}(n, k)\) and write \(P = P_y\). 
Write \(V_g = \mathrm{im}\, P\) and \(V_p = \ker P\), obviously \(V = V_g \oplus V_p\). 
We will show that \(V_g\) is a global \(\OG\)-comodule and that \(V_p\) is a global \(\mathcal{O}(G^\circ)\)-comodule extended to \(\OG\) to create a partial \(\OG\)-module, as described in Subsection \ref{subs:trivial}.

\begin{lemma}
    The partial coaction \(\rho : V \to V \otimes H\) restricts to a map \(V_g \to V_g \otimes H\) and \((V_g, \rho_{|V_g})\) is a global \(H\)-comodule. 
\end{lemma}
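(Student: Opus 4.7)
The plan has two parts: (i) show that $\rho P=(P\otimes H)\rho$, from which $\rho(V_g)\subseteq V_g\otimes H$ is immediate; and (ii) upgrade the partial coassociativity of $\rho$ to a genuine coassociativity on $V_g$.

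For (i), a direct unpacking of the definition of $P=P_y$ and of $\rho^3$ shows that for every algebra map $z:\OG\to k$,
\[(V\otimes z)(P\otimes H)\rho = (V\otimes yS\otimes y\otimes z)\rho^3, \qquad (V\otimes z)\rho P = (V\otimes z\otimes yS\otimes y)\rho^3.\]
By Corollary~\ref{cor:centrality} these two expressions coincide. Since $k$ is algebraically closed and $\OG$ is a finitely generated reduced commutative $k$-algebra, the Nullstellensatz implies that the algebra maps $\OG\to k$, i.e.\ the $k$-points of $\mathsf{O}(n)$, separate $\OG$. Hence $\rho P=(P\otimes H)\rho$, so $\rho(V_g)\subseteq V_g\otimes H$, and counitality of the restricted coaction is inherited from \ref{PCM1}.

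For (ii), the key observation is obtained from the axiom (PR3) applied at $A=k$ with the distinguished $y\in \mathsf{O}(n,k)\setminus \mathsf{SO}(n,k)$ used in the definition of $P$: for any $x\in \mathsf{O}(n,k)$,
\[\alpha_k(x)\alpha_k(y)\alpha_k(y^{-1}) = \alpha_k(xy)\alpha_k(y^{-1}).\]
Post-composing with $\alpha_k(y)$ and using $P=\alpha_k(y^{-1})\alpha_k(y)$ yields $\alpha_k(x)\alpha_k(y)P=\alpha_k(xy)P$, so evaluating on $V_g$ gives $\alpha_k(x)\alpha_k(y)(v)=\alpha_k(xy)(v)$ for every $v\in V_g$. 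The same (PR3)-trick handles any other $w\in \mathsf{O}(n,k)\setminus \mathsf{SO}(n,k)$ in place of $y$, thanks to the equality $P_w=P$ supplied by Lemma~\ref{le:globality}. For $w\in \mathsf{SO}(n,k)$, on the other hand, $\alpha_k(w)$ is already globally invertible (with inverse $\alpha_k(w^{-1})$), and Lemma~\ref{le:globality} gives $\alpha_k(x)\alpha_k(w)=\alpha_k(xw)$ on all of $V$. Combining the cases, $\alpha_k(x)\alpha_k(w)(v)=\alpha_k(xw)(v)$ for every $v\in V_g$ and every $x,w\in \mathsf{O}(n,k)$, which translates into
\[(V\otimes x\otimes w)(\rho\otimes H)\rho(v) = (V\otimes x\otimes w)(V\otimes\Delta)\rho(v).\]
Since $k$ is perfect, $\OG\otimes\OG$ is also a reduced finitely generated commutative $k$-algebra, so the pairs of $k$-points of $\mathsf{O}(n)$ separate $\OG\otimes\OG$, and the global coassociativity $(\rho\otimes H)\rho=(V\otimes\Delta)\rho$ on $V_g$ follows.

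The main obstacle is the case where neither $x$ nor $w$ lies in $\mathsf{SO}(n,k)$, which Lemma~\ref{le:globality} by itself does not cover; this is precisely the reason for passing to $V_g$. The uniform identification $P_w=P$ from Lemma~\ref{le:globality} together with (PR3) is exactly what rescues this case and collapses the four-case analysis into a single argument on $V_g$.
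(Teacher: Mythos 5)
Your proposal is correct and takes essentially the same route as the paper: part (i) is the paper's argument verbatim (\cref{cor:centrality} plus separation of points of \(\OG\) by \(k\)-points), and part (ii) is the paper's computation rewritten in the group-functional language --- the identity \(\alpha_k(x)\alpha_k(w)P_w=\alpha_k(xw)P_w\) you extract from (PR3) is exactly the instance of \ref{PCM2} that the paper applies, combined with \(P_{z'}|_{V_g}=\mathrm{id}\) from \cref{le:globality} and separation of points of \(\OG\otimes\OG\). The only slip is the citation in your \(\mathsf{SO}\)-case: \cref{le:globality} gives \(\alpha_k(w)\alpha_k(x)=\alpha_k(wx)\) with the special orthogonal element in the \emph{first} position, whereas you need it in the second; but this follows by the same one-line (PR3) trick using \(P_w=V\), so it is a misattribution rather than a gap.
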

\begin{proof}
    First we show that \(\rho\) restricts to a map \(V_g \to V_g \otimes H\). Take \(v \in V_g\). It suffices to show that
    \begin{equation}
    	(V \otimes yS \otimes y \otimes H)(\rho \otimes H \otimes H)(\rho \otimes H)\rho(v) = \rho(v). \label{eq:globalcom}
    \end{equation}
    Recall that 
    \[\rho(v) = \rho(P_y(v)) = (V \otimes H \otimes yS \otimes y)(\rho \otimes H \otimes H)(\rho \otimes H)\rho(v).\] 
    Since \(\mathsf{O}(n, k)\) separates the points of \(\OG\) (recall that \(k\) is algebraically closed and of characteristic \(0\)), (\ref{eq:globalcom}) follows from \cref{cor:centrality}.

    Finally, let us show that \(V_g\) is an \(H\)-comodule. We will show that for any \(z, z' \in \mathsf{O}(n,k)\) and \(v \in V_g\)
    \[(V \otimes z \otimes z')(\rho \otimes H)\rho(v) = (V \otimes z \otimes z')(V \otimes \Delta_H)\rho(v).\]
    Since \(P_{z'}(v) = v\) (by \cref{le:globality}, either \(P_{z'} = V\) or \(P_{z'}(v) = P_y(v) = v\)), 
    \begin{align*}
        (V \otimes z \otimes z')(\rho \otimes H)\rho(v) &= (V \otimes z \otimes z' \otimes z'S \otimes z')\rho^4(v) \\
        \overset{\ref{PCM2}}&{=} (V \otimes z \otimes z' \otimes z'S \otimes z')(V \otimes \Delta_H \otimes H \otimes H)\rho^3(v) \\
        &= (V \otimes z \otimes z')(V \otimes \Delta_H)\rho P_{z'}(v) \\
        &= (V \otimes z \otimes z')(V \otimes \Delta_H)\rho(v). \qedhere
    \end{align*}
\end{proof}

\begin{lemma}
    The partial coaction \(\rho : V \to V \otimes H\) restricts to a map \(V_p \to V_p \otimes H_+\) and \((V_p, \rho_{|V_p})\) is a global \(H_+\)-comodule. 
\end{lemma}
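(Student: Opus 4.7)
Fix $v\in V_p$. I would prove two claims: (a) $\rho(v)\in V_p\otimes H_+$, where $H_+=e_0H$ is viewed as a subspace of $H$; and (b) the resulting map $\sigma:=(V_p\otimes\pi)\rho|_{V_p}\colon V_p\to V_p\otimes H_+$ satisfies strict counitality and coassociativity, making $V_p$ into a global $H_+$-comodule.

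For (a), first I would establish $\rho(V_p)\subseteq V_p\otimes H$. Rewriting both sides of \cref{cor:centrality} in Sweedler notation produces the identity
\[
P_y\bigl((V\otimes z)\rho(v)\bigr)=(V\otimes z)\rho\bigl(P_y(v)\bigr)
\]
for every character $z\colon H\to k$. Since $P_y(v)=0$, the right-hand side vanishes and $(V\otimes z)\rho(v)\in V_p$ for all such $z$. Because $\OG$ is a reduced, finitely generated commutative $k$-algebra over the algebraically closed field $k$, its $k$-points separate $H$, so this pointwise statement upgrades to $(P\otimes H)\rho(v)=0$, i.e.\ $\rho(v)\in V_p\otimes H$. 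To refine further to $V_p\otimes H_+$, the same separation argument reduces the claim to showing $\alpha_k(y)(v)=0$ for each $y\in\mathsf{O}(n,k)\setminus\mathsf{SO}(n,k)$, since such characters are exactly the ones that detect the complement $H_-=(1-e_0)H$. From $\alpha_k(y^{-1})\alpha_k(y)(v)=P_y(v)=0$ together with axiom (PR3), one obtains $\alpha_k(zy^{-1})\alpha_k(y)(v)=\alpha_k(z)\alpha_k(y^{-1})\alpha_k(y)(v)=0$ for every $z\in\mathsf{O}(n,k)$; specialising $z=y$ and using $\alpha_k(1)=V$ forces $\alpha_k(y)(v)=0$.

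For (b), counitality of $\sigma$ is immediate from $\epsilon_{H_+}\circ\pi=\epsilon_H$ and counitality of $\rho$. Under the identification $H_+\cong e_0H$ and the formula $\Delta_{H_+}(h)=e_0h_{(1)}\otimes e_0h_{(2)}$ for $h\in H_+$, coassociativity of $\sigma$ is equivalent to the identity
\[
v^{(0)(0)}\otimes v^{(0)(1)}\otimes v^{(1)}=v^{(0)}\otimes e_0 v^{(1)}{}_{(1)}\otimes e_0 v^{(1)}{}_{(2)}\qquad\text{in } V\otimes H\otimes H.
\]
I would verify this by evaluating against an arbitrary pair of characters $(x,y)$ of $H$: the left side becomes $\alpha_k(x)\alpha_k(y)(v)$ and the right side becomes $x(e_0)\,y(e_0)\,\alpha_k(xy)(v)$. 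When $x,y\in\mathsf{SO}(n,k)$ both sides equal $\alpha_k(xy)(v)$ by \cref{le:globality}. In all other cases the right side vanishes, and the left side does too: either $\alpha_k(y)(v)=0$ (when $y\notin\mathsf{SO}(n,k)$, by (a)), or $\alpha_k(y)(v)\in V_p$ (when $y\in\mathsf{SO}(n,k)$, using $\rho(V_p)\subseteq V_p\otimes H$) so that $\alpha_k(x)$ annihilates it when $x\notin\mathsf{SO}(n,k)$, again by (a). Since algebra maps $H\otimes H\to k$ separate points of $H\otimes H$ (as the tensor square of a reduced finitely generated algebra over an algebraically closed perfect field is reduced), the displayed identity follows.

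The main obstacle is the coassociativity in (b): the partial coassociativity axioms (PCM2)–(PCM5) do not directly yield strict coassociativity, so the argument really hinges on reducing to pointwise identities on $\mathbb{G}(k)\times\mathbb{G}(k)$ and performing a case split according to which characters lie in the connected component $\mathsf{SO}(n,k)$.
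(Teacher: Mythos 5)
Your proof is correct, and its first half---showing $\rho(V_p)\subseteq V_p\otimes H$ via \cref{cor:centrality} and then $\rho(V_p)\subseteq V\otimes H_+$ by killing all characters outside $\mathsf{SO}(n,k)$---matches the paper's in substance; you derive $\alpha_k(y)(v)=0$ from $P_y(v)=0$ using (PR3) and the specialisation $z=y$, whereas the paper uses the convolution identity $(z\otimes zS)\Delta_H=\epsilon$ together with (PCM2) to get $(V\otimes z)\rho(v)=(V\otimes z)\rho P_z(v)=0$; these are interchangeable two-line computations. The genuine divergence is in the globality claim. The paper notes that once $\rho(V_p)\subseteq V_p\otimes H_+$ the restricted coaction equals $(V\otimes\pi)\rho$, which is automatically a \emph{partial} $H_+$-comodule because $\pi$ is a Hopf algebra morphism, and then simply invokes \cref{th:connectedgroup} (since $H_+=\mathcal{O}(G^\circ)$ with $G^\circ$ connected) to conclude globality. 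You instead verify strict coassociativity directly, by evaluating against pairs of characters and splitting into cases according to membership in $\mathsf{SO}(n,k)$, using \cref{le:globality} and your part (a). Your route is more computational but self-contained: it does not lean on \cref{th:connectedgroup}, whose proof passes through the duality with $U(\mathfrak{g})$ and the globality of partial $U(\mathfrak{g})$-modules. The paper's route is shorter and reuses established machinery. If you keep your version, do state explicitly that the separation arguments (joint injectivity of the characters of $\OG$ and of $\OG\otimes\OG$) rely on $k$ being algebraically closed of characteristic $0$ and on these algebras being reduced and finitely generated; the paper uses the same fact implicitly.
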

\begin{proof}
    We show first that \(\rho\) restricts to a map \(V_p \to V_p \otimes H\). It suffices to show that for any \(v \in V_p\) and any \(z \in \mathsf{O}(n, k),\) 
    \[(V \otimes yS \otimes y \otimes z)\rho^3(v) = 0.\]
    This is true by \cref{cor:centrality}, since \(P_y(v) = 0\). Next, remark that for any \(z \in \mathsf{O}(n, k) \setminus \mathsf{SO}(n, k)\)
    \begin{align*}
        (V \otimes z) \rho(v) &= (V \otimes z \otimes zS \otimes z) (V \otimes \Delta_H \otimes H) \rho^2(v) \\
        &= (V \otimes z \otimes zS \otimes z)\rho^3(v) \\
        &= (V \otimes z) \rho P_z(v) = 0.
    \end{align*}
    It follows that indeed \(\rho(V_p) \subseteq V_p \otimes H_+\). This means that \(\rho_{|V_p} = (V \otimes \pi)\rho\), and since \(\pi\) is a Hopf morphism, \((V_p, \rho_{|V_p})\) is a partial \(H_+\)-comodule which is global because \(H_+ = \mathcal{O}(G^\circ)\) (\cref{th:connectedgroup}). 
\end{proof}

\subsection{Example: a group of monomial matrices}\label{se:monomial}
\begin{example}\label{permutationmatrices}
    Consider the algebraic group $G$ defined by (\(A \in \Alg_k\))
    \begin{align}\label{grupoG}
    \G(A) & = \left\{ \left. \left(\begin{array}{ccc} a & 0 & 0\\ 0 & b & 0 \\ 0 & 0 & c \end{array} \right) \in M_3 (A) \; \right| \; a,b,c\in A^{\times} \right\} \cup \left\{ \left. \left(\begin{array}{ccc} 0 & 0 & c\\ a & 0 & 0 \\ 0 & b & 0 \end{array} \right) \in M_3 (A) \; \right| \; a,b,c\in A^{\times} \right\} \nonumber \\
    & \cup \left\{ \left. \left(\begin{array}{ccc} 0 & b & 0\\  0 & 0 & c \\ a & 0 & 0 \end{array} \right) \in M_3 (A) \; \right| \; a,b,c\in A^{\times} \right\} .
    \end{align}
    Since we work over an algebraically closed field in characteristic \(0\), we will concentrate on the group \(G = \G(k)\). 
    The first subset in (\ref{grupoG}) is the connected component of the identity, which is an algebraic subgroup, denoted by $G^{\circ}$. It is easy to see that $G/G^{\circ} \cong C_3$, the cyclic group of order $3$ and that the second and third components of $G$ are, respectively, the left cosets $gG^{\circ}$ and $g^2 G^{\circ}$, in which 
    \begin{equation}\label{matrizg}
    g=\left( \begin{array}{ccc} 0 & 0 & 1 \\ 1 & 0 & 0 \\ 0 & 1 & 0 \end{array} \right)
    \end{equation}
    is the permutation matrix corresponding to the generator of the cyclic group $C_3$. We are going to construct nontrivial partial comodules of $\OG$ from global representations of the connected subgroup $G^{\circ}$ using partial representations of $C_3$ (other than the trivial partial representations described in Subsection \ref{subs:trivial}).  We will follow the procedure given in \cite{DHSV}, as explained in Remark \ref{inductionofpartialreps}. 

    Consider, for example, the three dimensional representation $\pi: G^\circ \rightarrow \text{End}_{k^3}$ given by
    \[
    \pi (\sigma)v_1 =\alpha v_1 , \qquad \pi (\sigma )v_2 =\beta v_2 ,\qquad  \pi (\sigma )v_3 =\gamma v_3 ,
    \]
    for 
    \begin{equation}\label{matrizsigma}
    \sigma = \left( \begin{array}{ccc} \alpha & 0 & 0\\ 0 & \beta & 0 \\ 0 & 0 & \gamma \end{array} \right).
    \end{equation}
    Take the set $\{ G^\circ, gG^\circ \} \subseteq \mathcal{P}_{G^\circ} (G/G^\circ )$, in which $\mathcal{P}_{G^\circ} (G/G^\circ )$ denotes the set of subsets of $G/G^\circ$ which contain $G^\circ$ and $g$ is the permutation matrix described in (\ref{matrizg}). In this case, both subgroups $H$ and $K$, described in the method of partial induction, coincide with $G^\circ$. Now, take the vector space $W=V\oplus V^g$ and denote the basis of $W$ by $\{ v_1 , v_2 , v_3 , w_1 , w_2 , w_3 \}$ in which $\{ v_1, v_2, v_3 \}$ is the basis vectors of $V$ and $ \{w_1 , w_2 , w_3 \}$ is a basis of $V^g$ coming from the isomorphism $\phi_g :V\rightarrow V^g$. Using the procedure given in Remark \ref{inductionofpartialreps}, we have the partial representation $\widetilde{\pi}:G\rightarrow \text{End}_k (W)$ given as follows:
    \begin{equation}\label{representacaoW}
    \begin{array}{lll} \widetilde{\pi} (\sigma)(v_1) =\alpha v_1 , \quad & \widetilde{\pi} (g\sigma)(v_1) =\alpha w_1 , \quad & \widetilde{\pi} (g^2 \sigma)(v_1) =0 \\
    \widetilde{\pi} (\sigma)(v_2) =\beta v_2 , \quad & \widetilde{\pi} (g\sigma)(v_2) =\beta w_2 , \quad & \widetilde{\pi} (g^2 \sigma)(v_2) =0 \\
    \widetilde{\pi} (\sigma)(v_3) =\gamma v_3 , \quad & \widetilde{\pi} (g\sigma)(v_3) =\gamma w_3 , \quad & \widetilde{\pi} (g^2 \sigma)(v_3) =0 \\
    \widetilde{\pi} (\sigma)(w_1) =\beta w_1 , \quad & \widetilde{\pi} (g\sigma)(w_1) =0 , \quad & \widetilde{\pi} (g^2 \sigma)(w_1) =\beta v_1 \\
    \widetilde{\pi} (\sigma)(w_2) =\gamma w_2 , \quad & \widetilde{\pi} (g\sigma)(w_2) =0 , \quad & \widetilde{\pi} (g^2 \sigma)(w_2) =\gamma v_2 \\
    \widetilde{\pi} (\sigma)(w_3) =\alpha w_3 , \quad & \widetilde{\pi} (g\sigma)(w_3) =0 , \quad & \widetilde{\pi} (g^2 \sigma)(w_3) =\alpha v_3 ,
    \end{array}
    \end{equation}
    in which the matrix $\sigma$ is given by the expression (\ref{matrizsigma}).
    
    Now we can use these induced partial representations to create nontrivial examples of partial comodules over $\OG$. The algebra $\OG$ is the quotient of the polynomial algebra
    \[
    k[a_1 , a_2 , a_3 , b_1 , b_2 , b_3 , c_1 , c_2 , c_3 , t ] ,
    \]
    in which the variables $a_i$, $b_i$ and $c_i$ correspond, respectively, to the entries in the first, second and third columns, by the ideal 
    \begin{eqnarray*}
    \mathcal{I} & = & \langle a_1 a_2 \, , \,  a_1 a_3 \, ,\,  a_2 a_3 \, ,\,  b_1 b_2 \, ,\,  b_1 b_3 \, ,\,  b_2 b_3 \, ,\,  c_1 c_2 \, ,\,  c_1 c_3 \, ,\,  c_2 c_3 ,\\
    & \,  & a_1 b_1 \, ,\, a_1 c_1 \, ,\, b_1 c_1 \, ,\, a_2 b_2 \, ,\, a_2 c_2 \, ,\,  b_2 c_2 \, ,\, a_3 b_3 \, ,\, a_3 c_3 \, ,\, b_3 c_3 \, , \\  & \, & a_1 b_3   \, ,\, a_2 b_1 \, ,\, a_3 b_2  \, ,\,
      t(a_1 b_2 c_3 +a_2 b_3 c_1 +a_3 b_1 c_2 ) -1 \rangle .
    \end{eqnarray*}
    The relations determined in the quotient just tell us that the matrices should have only one invertible entry in each row and each column, that the matrices should have the form given in (\ref{grupoG}) and the their determinant should be invertible. The algebra $\mathcal{O}(G^\circ)$ is given by
    \[
    k[a, b, c, D ]/\langle abcD -1 \rangle ,
    \]
    which is simply the algebra of Laurent polynomials in three variables. Both are Hopf algebras, being the Hopf structure of $\mathcal{O}(G^{\circ})$ given simply by making the generators $a$, $b$, $c$ and $D$ grouplike. The Hopf algebra structure of $\OG$ is given by viewing the group $G$ as a subgroup of the matrix group $\mathsf{GL}_3$.
    As the group has three distinct connected components, there are three idempotent elements, $e_0$, $e_1$ and $e_2$ such that the algebra $\OG$ can be written as 
    \[
    \OG =e_0 \OG \times e_1 \OG \times e_2 \OG ,
    \]
    where $e_0 \OG \cong \mathcal{O}(G^{\circ})$. More explicitly, the idempotents are
    \[
    e_0 =a_1 b_2 c_3 t, \qquad e_1 =a_2 b_3 c_1 t, \qquad e_2 =a_3 b_1 c_2 t.
    \]
    There are three different multiplicative inclusions $\iota_i :\mathcal{O} (G^{\circ}) \rightarrow \OG$, for $i=0,1,2$, given by
    \[
    \begin{array}{llll}
    \iota_0 (a)=a_1 , \quad & \iota_1 (a)=a_2 , \quad & \iota_2 (a)=a_3 , \\
    \iota_0 (b)=b_2 , \quad & \iota_1 (b)=b_3 , \quad & \iota_2 (b )=b_1 , \\
    \iota_0 (c)=c_3 , \quad & \iota_1 (c)=c_1 , \quad & \iota_2 (c )=c_2  ,\\
    \iota_0 (D)=t , \quad & \iota_1 (D)=t , \quad & \iota_2 (D)=t  .
    \end{array}
    \]
    Basically, each one of the inclusions the inclusions $\iota_i$ throws,  respectively, the algebra $\mathcal{O} (G^{\circ})$ into the ideals $e_i \OG$.
    
    Finally, using the partial representation (\ref{representacaoW}) one can endow the vector space $W=V\oplus V^g$ with a structure of partial $\OG$-comodule. first note that $V$ is a global $\mathcal{O}(G^\circ)$-comodule, with structural map $\rho :V\rightarrow V\otimes \mathcal{O}(G^\circ)$ written as
    \[
    \rho (v_1)=v_1 \otimes a, \qquad \rho (v_2 )=v_2 \otimes b , \qquad \rho (v_3 )=v_3 \otimes c .
    \]
    Using the inclusions $\iota_0$, $\iota_1$ and $\iota_2$ previously described and the representation (\ref{representacaoW}), we construct the structural map $\widetilde{\rho}: W\rightarrow W\otimes \OG$,
    \[
    \begin{array}{lll}
    \widetilde{\rho} (v_1)=v_1\otimes a_1 +w_1 \otimes a_2 , \quad & \widetilde{\rho} (v_2)=v_2\otimes b_2 +w_2 \otimes b_3 , \quad &
    \widetilde{\rho} (v_3)=v_3\otimes c_3 +w_3 \otimes c_1 , \\
    \widetilde{\rho} (w_1)=v_1\otimes b_1 +w_1 \otimes b_2 , \quad &
    \widetilde{\rho} (w_2)=v_2\otimes c_2 +w_2 \otimes c_3 , \quad &
    \widetilde{\rho} (w_3)=v_3\otimes a_3 +w_3 \otimes a_1 , \end{array}
    \]
    which makes $W$ into a partial $\OG$-comodule.
\end{example}

\section{Explicit construction of the right adjoint of the forgetful functor}
\selabel{explicit}

\subsection{Topological vector spaces}\label{se:tvs}


In this subsection we collect some known definitions and results on topological vector spaces and topological coalgebras. More details and proofs can be found in \cite{takeuchi}, although we follow a slightly different approach.

%
%
Let us first introduce some notation. 
For any set $S$, we denote by $P_f(S)$ the finite powerset of $S$, that is, the set of all finite subsets of $S$. 
Let $V$ be a vector space, $I$ a (finite) set and $f_i:V\to V_i$, $i\in I$ a collection of  surjective $k$-linear maps.
Then we denote by $V_I$ the vector space $V/(\bigcap_{i\in I} \ker f_i)$ and by $f_I:V\to V_I$ the canonical projection. Throughout we consider a fixed base field $k$ endowed with the discrete topology.

\begin{definition}
We define the category of {\em topological $k$-vector spaces}, denoted as $\TVS_k$ as the category where
\begin{itemize}
\item an \ul{object} is a family $\ul V=(v_i:V\to V_i)_{i\in V_*}$, where $V_*$ is a set, $V$ and $V_i$ are vector spaces and $v_i$ is a jointly monic family of surjective linear maps.
\item a \ul{morphism} $\ul f:\ul V=(v_i:V\to V_i)_{i\in V_*}\to \ul W=(w_j:W\to W_j)_{j\in W_*}$ is a $k$-linear map $f:V\to W$ for which there exist a map $f_*:W_*\to P_f(V_*)$ and a family of $k$-linear maps $(f_j:V_{f(j)}\to W_j)_{j\in W_*}$ such that $f_j\circ v_{f_*(j)} = w_j \circ f$ for all $j\in W_*$.
\end{itemize}
\[
\begin{tikzcd}
	P_f(V_*) && W_* \arrow{ll}[swap]{f_*}\\[-20pt]
	V \arrow{rr}{f} \arrow{d}[swap]{v_{f_*(j)}} && W \arrow{d}{w_j} \\
	V_{f_*(j)} \arrow{rr}{f_j} && W_j
\end{tikzcd}
\]

We will often identify a topological vector space $\ul V$ with its underlying vector space $V$, and a morphism $\ul f$ with the underlying $f$.
\end{definition}

\begin{remarks}
We like to think of a morphism $\ul f:\ul V=(v_i:V\to V_i)_{i\in V_*}\to \ul W=(w_j:W\to W_j)_{j\in W_*}$ as a triple $\ul f=(f_*,f,f_j)$. However, a morphism is completely determined by the linear map $f$, as the existence of the maps $f_*$ and $f_j$ is considered as a property and not part of the structure. Remark also two triples $\ul f=(f_*,f,f_j)$ and $\ul g=(g_*,g,g_j)$ define the same morphism in $\TVS_k$ if one of the following equivalent conditions hold:
\begin{itemize}
\item $f=g$;
\item $f_j\circ v_{f_*(j)}=g_j\circ v_{g_*(j)}$ for all $j\in W_*$.
\end{itemize}
The equivalence follows easily from the fact that the morphisms $w_j$ are jointly monic. 

Any topological vector space $\ul V=(v_i:V\to V_i)_{i\in V_*}$ is isomorphic in $\TVS_k$ with the topological vector space $(v_I:V\to V_I)_{I\in P_f(V_*)}$. In other words, we can replace the index set of a topological vector space by its finite power set, which is moreover a directed partially ordered set by the inclusion. As follows from the proof of the next proposition, replacing $V_*$ by $P_f(V_*)$ is equivalent to replacing a subbasis for the topology of $V$ with a base for the topology. One could choose to consider the directed partially ordering on the index set as part of the structure of a topological vector space, however we prefer not to do this for sake of simplicity. 
\end{remarks}

Let us observe that this definition is equivalent to the one given in \cite{takeuchi}.

\begin{proposition}
The category $\TVS_k$ of topological vector spaces is equivalent to the category whose objects are vector spaces endowed with a translation invariant Hausdorff topology such that $0$ has a subbasis of neighborhoods consisting of open linear subspaces and whose morphisms are continuous linear maps. 
\end{proposition}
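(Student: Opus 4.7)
The plan is to construct an equivalence of categories $F \colon \TVS_k \to \TVS_k'$, where $\TVS_k'$ denotes the category on the right-hand side of the statement. On objects, given $\ul V = (v_i \colon V \to V_i)_{i \in V_*}$ in $\TVS_k$, I will endow the underlying space $V$ with the unique translation-invariant topology whose neighborhood base at $0$ is the collection $\{\ker v_I : I \in P_f(V_*)\}$ of finite intersections of kernels (equivalently, the initial topology making each $v_i$ continuous when each $V_i$ carries the discrete topology). On morphisms, $F$ sends $\ul f = (f_*, f, f_j)$ to its underlying linear map $f$.

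First I would check that $F$ lands in $\TVS_k'$: the family $\{\ker v_i\}_{i \in V_*}$ provides by construction a subbasis of neighborhoods of $0$ consisting of open linear subspaces, translation invariance is built into the definition, and the Hausdorff property is exactly the joint monicity hypothesis $\bigcap_{i \in V_*} \ker v_i = 0$. For morphisms, the defining relation $f_j \circ v_{f_*(j)} = w_j \circ f$ yields $\ker v_{f_*(j)} \subseteq f^{-1}(\ker w_j)$ for every $j \in W_*$, so preimages of subbasic neighborhoods of $0$ in $W$ contain subbasic neighborhoods of $0$ in $V$, which forces $f$ to be continuous. Essential surjectivity is then straightforward: given an object $V$ of $\TVS_k'$, any chosen subbasis $\{U_\alpha\}_{\alpha \in \Lambda}$ of open linear subspaces at $0$ produces a family of canonical surjections $(\pi_\alpha \colon V \to V/U_\alpha)_{\alpha \in \Lambda}$ which is jointly monic by Hausdorffness, and whose image under $F$ has $\{U_\alpha\}$ as subbasis at $0$, thus recovering the original topology.

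The crucial step is full faithfulness. Faithfulness is immediate from the remark already made in the excerpt that a morphism in $\TVS_k$ is uniquely determined by its underlying linear map. For fullness, given a continuous linear map $f \colon V \to W$, for each $j \in W_*$ the preimage $f^{-1}(\ker w_j)$ is an open linear subspace of $V$ containing $0$, so by the definition of the topology there exists a finite $I_j \subseteq V_*$ with $\ker v_{I_j} \subseteq f^{-1}(\ker w_j)$. Setting $f_*(j) := I_j$, the inclusion $\ker v_{f_*(j)} \subseteq \ker(w_j \circ f)$ forces $w_j \circ f$ to descend uniquely to a linear map $f_j \colon V_{f_*(j)} \to W_j$ satisfying $f_j \circ v_{f_*(j)} = w_j \circ f$, assembling the structure triple witnessing $f$ as a morphism of $\TVS_k$. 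The only real subtlety is ensuring that every open neighborhood of $0$ in $V$ contains a single finite intersection $\ker v_I$, which is automatic from the fact that such finite intersections form a base of the topology at $0$; once this is observed, the rest is routine bookkeeping.
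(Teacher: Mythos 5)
Your proposal is correct and follows essentially the same route as the paper: endow $V$ with the initial topology determined by the kernels $\ker v_i$ (with the $V_i$ discrete), recover an object of $\TVS_k$ from a subbasis of open linear subspaces by passing to quotients, and match Hausdorffness with joint monicity. In fact you are slightly more complete than the paper, which leaves the verification of full faithfulness to the reader, whereas you spell out the fullness argument (every open linear subspace $f^{-1}(\ker w_j)$ contains some finite intersection $\ker v_{I_j}$, so $w_j\circ f$ factors through $V_{I_j}$).
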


\begin{proof}
On the level of objects, this equivalence is obtained by endowing for any object $\ul V$ in $\TVS_k$ the vector space $V$ with the coarsest topology for which all maps $v_i$ are continuous and all spaces $V_i$ are discrete. More precisely, a subbasis of linear open neighborhoods for $0$ is given by the kernels of the maps $v_i$ (hence a base of linear open neighborhoods is given by finite intersections of those kernels). By requiring that the topology is translation invariant, this base defines a unique topology on $V$. The fact that the family $v_i$ is jointly monic implies that the intersection of all $\ker v_i$ is the zero space, which is equivalent to the topology being Hausdorff. 

Conversely, if $V$ is a vector space endowed with a topology as in the statement, the spaces $V_i$ can be taken quotients of $V$ by the elements of the topological subbasis of linear open neighborhoods of $0$, and the maps $v_i$ are the canonical projections. Being a quotient by an open subspace, the vector spaces $V_i$ are discrete. Moreover as we started from a subbasis for the topology, the topology on $V$ is exactly the coarsest one for which the projections $v_i$ are surjective.

Let us now verify the correspondence for morphisms. By the first part of the theorem, a linear map $f:V\to W$ is continuous if and only if all compositions $w_j\circ f$ are continuous. Since the projections $v_{f_*(j)}$ are continuous by construction (or by the above correspondence) and since the maps $f_j$ (being maps between discrete spaces) are of course continuous, we have that $w_j\circ f=f_j\circ v_{f_*(j)}$ is continuous as well.

We leave it to the reader to verify that these correspondences define an equivalence of categories.
\end{proof}

\begin{example}\label{ex:tvs}
\begin{enumerate}[(i)]
\item Any vector space becomes a topological vector space when endowed with the discrete topology, in other words, we consider for a vector space $V$ the object $(id_V:V\to V)$ in $\TVS_k$. 
\item Let \(V\) be a vector space. The dual space \(V^*\) is a topological vector space for the finite topology: a topological basis is given by the subspaces
\[V^*_F = \{\varphi \in V^* \mid \varphi(v) = 0 \ \text{ for all } v \in F\}\]
for every finite \(F \subseteq V\). This is the coarsest topology for which all $v:V^*\to k, \varphi\mapsto \varphi(v)$ are continuous for the maps $v\in V$ (where $k$ is considered with the discrete topology). Still otherwise stated, we have $(v:V^*\to k)_{v\in V}$ as object in $\TVS_k$.
\item Let \(V\) be a vector space and consider the \textit{weak} topology, i. e. a topological basis is given by the family of subspaces of finite codimension. This is the weakest topology that makes every element of \(V^*\) continuous (when \(k\) has the discrete topology). This is also the topology on \(V\) induced from the inclusion \(V \hookrightarrow V^{**},\) where \(V^{**}\) has the finite topology. As an object in $\TVS_k$, we have the object $(v^*:V\to k)_{v^*\in V^*}$.
\item Let $(V_i)_{i\in I}$ be a family of vector spaces and $V=\prod_i V_i$ their product. Then $(\pi_i:V\to V_i)_{i\in I}$ is a topological vector space where $\pi_i$ are the canonical projections. Remark that for any vector space \(W,\) \(W^* \cong \prod_{\dim W} k\) as topological vector spaces, where \(W^*\) has the finite topology. 
\item Let $(V_i)_{i\in I}$ be a family of vector spaces and $V=\bigoplus_i V_i$ their direct sum.
We can view \(V\) as a topological subspace of the topological vector space \(\prod_i V_i\) from the previous example (i.e.\ we consider the restrictions of the projections \(\pi_i\) to \(V\)).
Then, writing for a given vector space $W\cong \bigoplus_B k$ by fixing a base $B$, we recover in this way the weak topology on $W$. Remark that this construction is different from the coproduct in the category of topological vector spaces (as described in \cite{takeuchi}). Indeed, for this purpose, one should consider the topology on \(V=\bigoplus_i V_i\) given by \((\pi_J : V \to V_J)_{J \in P(I)}\) (where \(P(I)\) is the full powerset on \(I\)). In particular, the coproduct in $\TVS_k$ of a family discrete vector spaces is again discrete.
\end{enumerate}
\end{example}

The procedures in \cref{ex:tvs} are functorial. 
Moreover, the functor \(D : \Vect_k \to \TVS_k\) endowing a vector space with the discrete topology is a fully faithful left adjoint to the forgetful functor \(U:\TVS_k \to \Vect_k\). The functor \((-)^*:\Vect_k^{op} \to \TVS_k : V \mapsto V^*\) from example (ii)
is also fully faithful and is a right adjoint to the functor $(-)^{\odot}:\TVS_k\to \Vect_k^{op}$ that sends a topological vector space $V$ to the space of all continuous linear maps from $V$ into $k$. In particular, for any vector space $V$, we have a natural isomorphism $V\cong (V^*)^\odot$. These constructions show that we can view both $\Vect_k$ and $\Vect_k^{op}$ as a full subcategory of the category of topological vector spaces. This can be summarized in the following commutative diagram of adjoint functors (where the inner and outer triangle commutes).
\[
\xymatrix{
\Vect_k \ar@<-.8ex>[ddrr]_-D \ar@{}[rrrr]|-{\bot}
\ar@<.8ex>[rrrr]^{(-)^*} &&&& \Vect_k^{op} \ar@<-.8ex>[ddll]_-{(-)^*} \ar@<.8ex>[llll]^-{(-)^*} \ar@{}[ddll]|-{\sevdash} \\
\\
&& \TVS_k \ar@<-.8ex>[uull]_-U 
\ar@{}[uull]|-{\swvdash} \ar@<-.8ex>[rruu]_-{(-)^\odot}
}
\]

\begin{definition}\label{def:completion}
Let $\ul V=(v_i:V\to V_i)_{i\in V_*}$ be a topological vector space. Consider $P_f(V_*)$ with the directed partial order induced by containment. Then we define the {\em completion} of $V$ as the topological vector space $(\hat v_I:\hat V\to V_I)_{I\in P_f(V_*)}$, where the vector space $\hat V$ is given by the projective limit
\[\hat{V} = \varprojlim_{I\in P_f(V_*)} V_I.\]
and where we denote by $\hat v_I:\hat V\to V_I$ the canonical projections. 

By the universal property of the projective limit, we obtain a canonical map $\gamma_V: V\to \hat V$ such hat $v_I=\hat v_I\circ \gamma_V$ for all $I\in P_f(V_*)$, which means that $\gamma_V$ is a morphism in $\TVS_k$. We say that a topological vector space is {\em complete} if $\gamma_V$ is an isomorphism in $\TVS_k$. 

We can consider the category $\CTVS_k$ of complete topological vector spaces as a full subcategory of $\TVS_k$. 
\end{definition}

The following result follows directly from the definitions.

\begin{lemma}\label{le:completion}
For any topological vector space $\ul V$ the following statements hold.
\begin{enumerate}[(i)]
\item The completion \(\hat{V}\) is a complete topological vector space. 
\item The canonical map $\gamma_V$ is open and injective.
\item The canonical map is surjective if and only if \(V\) is complete.
\end{enumerate}
Moreover, the completion $\widehat{(-)}:\TVS_k\to \CTVS_k$ defines a left adjoint to the (fully faithful) forgetful functor $\CTVS_k\to \TVS_k$, and the unit of this adjunction is exactly given by the canonical map $\gamma$.
\end{lemma}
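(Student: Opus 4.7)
The strategy is to treat the statements (i)--(iii) about the canonical map $\gamma_V$ first and then derive the adjunction, exploiting throughout the universal property of the projective limit defining $\hat V$ together with the identity $v_I = \hat v_I \circ \gamma_V$ ($I \in P_f(V_*)$), which holds by construction of $\gamma_V$.

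For (ii) this identity gives
\[ \ker \gamma_V = \bigcap_{I \in P_f(V_*)} \ker v_I = \bigcap_{i \in V_*} \ker v_i = 0, \]
where the last equality is jointly monicity of the family $(v_i)_{i \in V_*}$; so $\gamma_V$ is injective. For openness, the family $\{\ker v_I : I \in P_f(V_*)\}$ is a base of open linear neighbourhoods of $0$ in $V$, and using injectivity one obtains $\gamma_V(\ker v_I) = \gamma_V(V) \cap \ker \hat v_I$, which is open in the subspace topology on $\gamma_V(V)$. Statement (iii) is then formal: a bijective morphism in $\TVS_k$ which is open is an isomorphism, and by \deref{completion} completeness of $V$ means exactly that $\gamma_V$ is an isomorphism.

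For (i) I would exploit that the index set $P_f(V_*)$ of the topology of $\hat V$ is already closed under finite unions. Setting $(\hat V)_{I'} = \hat V / \bigcap_{I \in I'} \ker \hat v_I$ for $I' \in P_f(P_f(V_*))$, one has $\bigcap_{I \in I'} \ker \hat v_I = \ker \hat v_J$ with $J = \bigcup_{I \in I'} I \in P_f(V_*)$, because by construction $\hat v_J$ is the quotient of $\hat V$ by the intersection of the kernels of the $\hat v_i$ for $i \in J$. This gives a cofinality between the two diagrams and hence $\widehat{\hat V} \cong \hat V$. The auxiliary point needed is that each $\hat v_J$ is surjective, which is the classical fact that projections in a filtered projective limit of surjective linear maps remain surjective.

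For the adjunction, given a morphism $f : V \to W$ in $\TVS_k$ with $W$ complete, the definition of a morphism furnishes for each $j \in W_*$ a finite $f_*(j) \subseteq V_*$ and a map $f_j : V_{f_*(j)} \to W_j$ with $w_j \circ f = f_j \circ v_{f_*(j)}$. The family $(f_j \circ \hat v_{f_*(j)} : \hat V \to W_j)_{j \in W_*}$ is then compatible, so by the universal property of $\hat W = \varprojlim W_J$ together with the isomorphism $\gamma_W : W \cong \hat W$ it induces a unique morphism $\hat f : \hat V \to W$ in $\TVS_k$ with $\hat f \circ \gamma_V = f$; naturality in $f$ is automatic, yielding the adjunction with unit $\gamma$. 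The subtlest point, and the one I expect to be the main obstacle, is the uniqueness of $\hat f$: any morphism $\hat V \to W_j$ in $\TVS_k$ factors, by continuity and discreteness of $W_j$, through some quotient $(\hat V)_{I'} \cong V_J$ of $\hat V$, and since $\hat v_J$ is surjective the factoring map is determined by its values on the image of $\gamma_V$, forcing $\hat f$ to coincide with any other extension of $f$.
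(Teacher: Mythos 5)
Your proof is correct. The paper itself offers no argument for this lemma (it is introduced with ``The following result follows directly from the definitions''), so there is no proof to compare against; your write-up is a valid elaboration of exactly the verifications the authors left implicit: injectivity and openness of $\gamma_V$ from $v_I=\hat v_I\circ\gamma_V$ and joint monicity, the identification $\bigcap_{I\in I'}\ker\hat v_I=\ker\hat v_{\cup I'}$ giving cofinality of the two limit diagrams and hence completeness of $\hat V$, and the universal-arrow formulation of the adjunction. Two small points could be tightened. First, the surjectivity of $\hat v_J$ does not require the general fact about filtered limits of surjections: it is immediate from the factorization $\hat v_J\circ\gamma_V=v_J$, since $v_J$ is surjective. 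Second, in the construction of $\hat f$ the family $(f_j\circ\hat v_{f_*(j)})_{j\in W_*}$ is indexed by $W_*$, whereas the universal property of $\hat W=\varprojlim_{J\in P_f(W_*)}W_J$ asks for a compatible family indexed by $P_f(W_*)$; one should add that for $J\in P_f(W_*)$ the maps $f_j\circ\hat v_{f_*(j)}$, $j\in J$, jointly factor through $\hat v_K$ with $K=\bigcup_{j\in J}f_*(j)$ and land in the image of $W_J$ inside $\prod_{j\in J}W_j$ (using surjectivity of $v_K$), which produces the required compatible maps $\hat V\to W_J$. With these remarks, your uniqueness argument --- that any morphism from $\hat V$ to a discrete space factors through some $V_J$, onto which $\gamma_V(V)$ already surjects --- correctly completes the adjunction.
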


\begin{examples}
Let $V$ be a vector space. Then the discrete topological vector space $D(V)$ is complete, as is the topological vector space $V^*$ with the finite topology. The weak topology on $V$ is not complete unless $V$ is finite dimensional, and the completion is given by $V^{**}$ with its finite topology. The completion of a topological vector space $\bigoplus_{i\in I}V_i$ as in example \ref{ex:tvs}(v) (with the subspace topology) is given by the product space $\prod_{i\in I}V_i$.
\end{examples}

\begin{proposition}
The category $\TVS_k$ is a monoidal category such that the adjunction $U:\TVS_k\leftrightarrow \Vect_k:D$ is strict monoidal. Explicitly, the tensor product of two toplogical vector spaces $\ul V=(v_i:V\to V_i)_{i\in V_*}$ and $\ul W=(w_j:W\to W_j)_{j\in W_*}$ is given by 
$$\ul V\ot\ul W=((v\ot w)_{i,j}:=v_i\ot w_k:V\ot W\to V_i\ot W_j)_{(i,j)\in V_*\times W_*}$$
\end{proposition}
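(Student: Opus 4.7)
The plan is to verify the four items: (a) the proposed tensor product is a well-defined object of $\TVS_k$; (b) the tensor product extends to a bifunctor on morphisms; (c) the associators and unitors from $\Vect_k$ lift to $\TVS_k$; and (d) both $U$ and $D$ are strict monoidal with this structure.

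For (a), surjectivity of each $v_i\ot w_j$ is automatic since we work over a field. The content is joint monicity of the family $\{v_i\ot w_j\}_{(i,j)\in V_*\times W_*}$. By the remark following the definition of $\TVS_k$, we may replace $V_*$ and $W_*$ by their finite powersets without changing the object up to isomorphism; so we may assume that the index families are closed under the operation $I\mapsto V_I=V/\bigcap_{i\in I}\ker v_i$. Given a nonzero $x\in V\ot W$, write $x=\sum_{k=1}^n a_k\ot b_k$ with $b_1,\dots,b_n$ linearly independent. Since $\bigcap_j\ker w_j=0$ and $\mathrm{span}(b_1,\dots,b_n)$ is finite dimensional, a standard pigeonhole argument produces a finite $J\subset W_*$ such that the composite $\mathrm{span}(b_1,\dots,b_n)\hookrightarrow W\twoheadrightarrow W_J$ is injective; hence $(V\ot w_J)(x)\neq 0$ in $V\ot W_J$. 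Now repeat the argument on the $V$ side, choosing a representation of $(V\ot w_J)(x)$ with linearly independent left-hand entries, and find a suitable $I$. This is the main step, and the only subtle point is this ``finite-dimensional approximation'' lemma.

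For (b), given $\ul f:\ul V\to\ul V'$ and $\ul g:\ul W\to\ul W'$, for each $(I',J')$ in the index set of $\ul V'\ot\ul W'$ pick witnesses $(f_\ast(I'),f_{I'})$ and $(g_\ast(J'),g_{J'})$ of the morphism property of $\ul f$ and $\ul g$; then $(f_{I'}\ot g_{J'})\circ(v_{f_\ast(I')}\ot w_{g_\ast(J')})=(v'_{I'}\ot w'_{J'})\circ(f\ot g)$ exhibits $f\ot g$ as a morphism in $\TVS_k$, with $(f\ot g)_\ast(I',J'):=f_\ast(I')\times g_\ast(J')$. Bifunctoriality then follows from bifunctoriality of $\ot$ on $\Vect_k$, since a morphism in $\TVS_k$ is determined by its underlying linear map.

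For (c), take the monoidal unit to be $D(k)=(\id_k:k\to k)$. The underlying linear maps of the $\Vect_k$ associator $\alpha_{V,W,X}$ and unitors $\lambda_V,\rho_V$ are isomorphisms and hence trivially morphisms in $\TVS_k$ once one writes down the witness data; the coherence axioms descend from those in $\Vect_k$. For (d), $U$ is strict monoidal by construction, since $U(\ul V\ot\ul W)=V\ot W=U\ul V\ot U\ul W$ and $U(D(k))=k$. For $D$, observe that $D(V)\ot D(W)$ has indexing set the singleton $\{\ast\}\times\{\ast\}$ and structure map $\id_V\ot\id_W=\id_{V\ot W}$, so $D(V)\ot D(W)=D(V\ot W)$ on the nose, and $D(k)$ is the chosen unit; hence $D$ is also strict monoidal, and the unit/counit of the adjunction $U\dashv D$ are monoidal natural transformations. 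The real work is concentrated in the joint-monicity step of (a); everything else is essentially bookkeeping on top of the monoidal structure of $\Vect_k$.
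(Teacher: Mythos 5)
The paper's own ``proof'' is essentially a one-liner: it declares the verification routine and records only the identity $\ker(v_i\ot w_j)=\ker v_i\ot W+V\ot \ker w_j$, which identifies the construction with Takeuchi's. Your write-up supplies the verification the paper omits, and its architecture is sound: well-definedness, bifunctoriality via the witness data $(f\ot g)_*(I',J')=f_*(I')\times g_*(J')$, coherence inherited from $\Vect_k$ because morphisms in $\TVS_k$ are determined by their underlying linear maps, and strictness of $U$ and $D$. Items (b)--(d) are indeed bookkeeping and you handle them correctly (modulo the harmless slip that $D$ is the \emph{left} adjoint of $U$, so the adjunction is $D\dashv U$).

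The one step that needs tightening is the joint-monicity argument in (a). Your finite-dimensional approximation produces \emph{finite subsets} $I\subseteq V_*$ and $J\subseteq W_*$ with $(v_I\ot w_J)(x)\neq 0$, i.e.\ it proves joint monicity of the family indexed by $P_f(V_*)\times P_f(W_*)$. But the object in the statement is indexed by \emph{single pairs} $(i,j)\in V_*\times W_*$, and joint monicity of the refined family does not formally imply joint monicity of the coarser one: you still must show that $x\notin \ker(v_I\ot w_J)$ forces $x\notin\ker(v_i\ot w_j)$ for some $i\in I$, $j\in J$. This is true, but it requires exactly the kernel identity the paper records, namely $\bigcap_{(i,j)\in I\times J}\ker(v_i\ot w_j)=\ker v_I\ot W+V\ot\ker w_J=\ker(v_I\ot w_J)$. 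Once you have that identity, the powerset detour becomes unnecessary: since $x\in\ker v_i\ot W+V\ot\ker w_j$ if and only if $(v_i\ot W)(x)\in V_i\ot\ker w_j$, intersecting over all $j\in W_*$ and using that $V_i\ot(-)$ commutes with intersections over a field gives $\bigcap_{j}\ker(v_i\ot w_j)=\ker v_i\ot W$, and then intersecting over all $i\in V_*$ gives $\bigcap_{(i,j)}\ker(v_i\ot w_j)=\bigl(\bigcap_i\ker v_i\bigr)\ot W=0$. I recommend replacing the ``we may assume the index sets are finite powersets'' reduction by this two-line computation; with that change the proof is complete and matches the content of the paper's remark.
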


\begin{proof}
One can easily verify the statement holds. Let us just remark that since $\ker(v_i\ot w_k)=\ker v_i\ot W + V\ot \ker w_i$, the tensor product given here coincides with the one described in \cite[Section 1.5]{takeuchi}.
\end{proof}

Since the tensor product of two complete topological vector spaces is not necessarily complete, we also introduce a second tensor product between topological vector spaces.

\begin{definition}
Let $V$ and $W$ be two topological vector spaces. The {\em completed tensor product} is the completion of the tensor product:
$$V\hot W=\widehat{V\ot W}.$$
\end{definition}

The completed tensor product turns the category of complete topological vector spaces \(\CTVS_k\) into a monoidal category and the adjunction $\widehat{(-)}:\xymatrix@C=5pt{ (\TVS_k,\ot) \ar@<.8ex>[rr] & \scriptstyle{\bot} & (\CTVS_k,\hot) \ar@<.8ex>[ll] } : U$ is lax monoidal.
Remark however that \(\TVS_k\) is not a monoidal category with respect to $\hot$: since \(V \hotimes k = \hat{V},\) there can only be an isomorphism \(V \hotimes k \cong V\) if \(V\) is complete. 

\begin{lemma}[{\cite[Corollary 1.3, Theorem 2.3]{takeuchi}}]
	\label{le:projlim}
	\label{le:producttensor}
	\begin{enumerate}[(i)]
\item	Let \((V_i)_i\) and \((W_j)_j\) be families of topological vector spaces. Then 
	\[\left(\prod_i V_i\right) \hotimes \left(\prod_j W_j\right) \cong \prod_{i, j} V_i \hotimes W_j.\]
\item	Let \((V_i)_i\) be a projective system of complete topological vector spaces and let \(W\) be any topological vector space. Then there is a canonical isomorphism of topological vector spaces
	\[(\varprojlim V_i) \hotimes W \cong \varprojlim (V_i \hotimes W).\]
\end{enumerate}
In particular, the functor \(- \hotimes W\) preserves kernels of continuous linear maps between complete topological vector spaces, and intersections of complete topological vector spaces. 
\end{lemma}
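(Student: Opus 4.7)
The plan is to first establish (ii) by analysing discrete quotients on both sides, and then to deduce (i) from (ii) through a finite-product reduction.

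For (ii), I would begin by writing $V = \varprojlim V_i$ (which lies in $\CTVS_k$ since $\CTVS_k$ is closed under limits in $\TVS_k$) and identifying a cofinal family of discrete quotients of $V$. Using the directedness of the projective system, every discrete quotient of $V$ factors through a canonical projection $V \to V_i$ followed by some discrete quotient $V_i \to V_{i,s}$, since finite intersections of such kernels can always be dominated by a single index. Consequently a cofinal system of discrete quotients of $V \otimes W$ is of the form $V_{i,s} \otimes W_t$, and by definition
\[V \hot W = \widehat{V\otimes W} = \varprojlim_{(i,s,t)} V_{i,s} \otimes W_t.\]
On the other hand, since limits commute with limits,
\[\varprojlim_i \bigl(V_i \hot W\bigr) = \varprojlim_i \varprojlim_{(s,t)} V_{i,s}\otimes W_t = \varprojlim_{(i,s,t)} V_{i,s}\otimes W_t,\]
and a careful identification of the limit cones will show that the canonical comparison morphism between these is an isomorphism.

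For (i), I would use that $\prod_i V_i = \varprojlim_{F\in P_f(I)} \prod_{i\in F} V_i$ in $\CTVS_k$, and analogously for $\prod_j W_j$. Finite products and finite coproducts coincide in $\TVS_k$, the vector space tensor product distributes over direct sums, and completion is a left adjoint by \leref{completion} and hence preserves finite biproducts; so for finite $F$, $G$ one obtains
\[\Bigl(\prod_{i\in F} V_i\Bigr) \hot \Bigl(\prod_{j\in G} W_j\Bigr) \cong \prod_{(i,j)\in F\times G} V_i \hot W_j.\]
Applying (ii) in each argument successively, and using the evident symmetric variant in the second argument, yields
\[\Bigl(\prod_i V_i\Bigr) \hot \Bigl(\prod_j W_j\Bigr) = \varprojlim_F \varprojlim_G \prod_{(i,j)\in F\times G} V_i \hot W_j = \prod_{i,j} V_i \hot W_j.\]

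For the final assertion, I would realise both a kernel of a continuous map $f \colon V \to V'$ between complete topological vector spaces and an intersection of closed subspaces of a complete topological vector space as directed projective limits in $\CTVS_k$: the kernel as $\bigcap_{U'} f^{-1}(U')$, where $U'$ ranges over the filtered family of open linear subspaces of $V'$ and each $f^{-1}(U')$ is closed, hence complete, in $V$; and a general intersection of closed subspaces as the directed limit over finite sub-intersections (which in turn reduce to iterated equalisers). The preservation by $-\hot W$ then follows from (ii). The main obstacle lies in (ii): justifying the reduction of an arbitrary discrete quotient of $\varprojlim V_i$ to one factoring through a single $V_i$ uses directedness in an essential way, and the compatibility of the transition maps in the two triple-indexed systems must be checked with some care to make the identification rigorous.
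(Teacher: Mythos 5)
The paper offers no proof of this lemma at all: it is imported verbatim from Takeuchi (\cite[Corollary 1.3, Theorem 2.3]{takeuchi}), so your proposal is not so much an alternative route as an attempt to reprove the cited results. Your overall strategy --- unwind $\hot$ as a projective limit of discrete quotients, exhibit a common cofinal indexing system, and commute limits --- is indeed the natural one and is close in spirit to Takeuchi's arguments, and the reduction of (i) to (ii) via finite sub-products, as well as the realisation of $\ker f$ as the directed intersection $\bigcap_{U'} f^{-1}(U')$ of open (hence closed, hence complete) subspaces, are good ideas.

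There are, however, genuine gaps. The central one is in (ii): directedness does show that the open subspaces $p_i^{-1}(U_{i,s})$ are cofinal in $\varprojlim V_i$, but the resulting discrete quotient is $V/p_i^{-1}(U_{i,s})\cong p_i(V)/(p_i(V)\cap U_{i,s})$, which is a \emph{subspace} of $V_{i,s}$ and equals it only when $p_i$ has dense image --- which the projections of a projective limit need not have. So the displayed identification $V\hot W=\varprojlim_{(i,s,t)}V_{i,s}\otimes W_t$ is precisely the content of the theorem, not a definition-chase; what must be proved is that the canonical comparison map $(\varprojlim V_i)\hot W\to\varprojlim(V_i\hot W)$ is surjective, and this is exactly where completeness of the $V_i$ enters (via a Mittag--Leffler-type argument identifying $\bigcap_{j\geq i}\im(V_j\to V_{i,s})$ with the image of the limit). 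Tellingly, your sketch of (ii) never uses the completeness hypothesis. Two smaller points: in (i) the spaces $V_i$, $W_j$ are arbitrary, so the finite sub-products $\prod_{i\in F}V_i$ need not be complete and (ii) does not apply to them directly; one must first replace every factor by its completion, using that $V\hot W\cong \hat V\hot W$ (which holds because $\gamma_V$ is open and injective by \cref{le:completion}, so $V$ and $\hat V$ have the same discrete quotients), a step your argument omits. Finally, kernels and binary intersections are \emph{finite} limits and do not follow formally from preservation of directed projective limits; even after your reduction of $\ker f$ to a directed intersection, the identification of $\varprojlim\bigl(f^{-1}(U')\hot W\bigr)$ with $\ker(f\hot W)$ inside $V\hot W$ still requires an argument (injectivity of $N\hot W\to V\hot W$ for closed $N\subseteq V$, and $(f\hot W)^{-1}(U'\hot W)=f^{-1}(U')\hot W$).
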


In \cite{takeuchi}, Takeuchi introduced topological coalgebras. 
\begin{definition}
	A {\em topological coalgebra} is a triple \((C, \Delta, \epsilon)\), where \(C\) is a topological vector space, and 
	\begin{gather*}
		\Delta : C \to C \hotimes C \\
		\epsilon: C \to k
	\end{gather*}
are continuous linear maps that satisfy the usual coassociativity condition and counitality \((C \hotimes \epsilon)\Delta = (\epsilon \hotimes C) \Delta = \gamma_C\) as maps \(C \to \hat{C}\). We denote the category of topological coalgebras by $\TCoalg_k$
\end{definition}

\begin{example}
	\label{ex:dualcoalgebra}
Let \(A\) be a \(k\)-algebra. While the dual vector space \(A^*\) is not a coalgebra unless \(A\) is finite dimensional, \(A^*\) always has the structure of a topological coalgebra. Indeed, the finite topology on \(A^*\) coincides with the product topology when it is viewed as the product of \(\dim A\) copies of \(k\). By \cref{le:producttensor}, 
\[A^* \hotimes A^* \cong \prod_{(\dim A)^2} k \hotimes k \cong (A \otimes A)^*,\]
hence the multiplication map \(\mu : A \otimes A \to A\) dualizes to a map
\[\mu^* : A^* \to (A \otimes A)^* \cong A^* \hotimes A^*\]
which makes \(A^*\) into a topological coalgebra. 
\end{example}

Of particular interest to this article will be the topological cofree coalgebra. It turns out that its description is much simpler than that of the usual cofree coalgebra (see \cite[section 2.2]{coreps} and references therein). Let \(V\) be a topological vector space and write
\begin{align*}
	V^{\hotimes 0} &= k, \\
	V^{\hotimes 1} &= V, \\
	V^{\hotimes n} &= V \hotimes V^{\hotimes (n - 1)} \quad \text{ for } n > 1.
\end{align*}
Then consider the (complete) topological vector space
\[\hat{C}(V) = \prod_{n \in \N} V^{\hotimes n}\] 
which comes with the natural projection \(p : \hat{C}(V) \to V\). We then have the following result

\begin{theorem}\cite[Theorem 3.1]{takeuchi}
Let \(V\) be a topological vector space. The topological vector space $\hat C(V)$ constructed above can be endowed with the structure of a topological coalgebra such that for any topological coalgebra \(C\) and continuous linear map \(f : C \to V,\) there is a unique topological coalgebra morphism \(\bar{f} : C \to \hat{C}(V)\) such that \(f = p\bar{f}\). In other words, $\hat C:\TVS_k\to \TCoalg_k$ is a right adjoint to the forgetful functor $\TCoalg_k\to \TVS_k$.
\end{theorem}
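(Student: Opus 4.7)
The plan is to follow the classical construction of the tensor coalgebra with deconcatenation, but in the topological setting where completion tames the cofree coalgebra. The key enabling fact is \leref{producttensor}, which gives natural isomorphisms
$$\hat{C}(V)\hot\hat{C}(V)\cong \prod_{m,n\in\N}V^{\hot m}\hot V^{\hot n}\cong \prod_{p\in\N}\Big(\bigoplus_{m+n=p}\textrm{``copies of''}\,V^{\hot p}\Big),$$
together with the associativity isomorphism $V^{\hot m}\hot V^{\hot n}\cong V^{\hot(m+n)}$ (valid because each $V^{\hot n}$ is complete). Using these, I would define the comultiplication $\Delta:\hat{C}(V)\to \hat{C}(V)\hot\hat{C}(V)$ componentwise: along the factor indexed by $(m,n)$ and restricted to the input factor $V^{\hot (m+n)}$, $\Delta$ is the identity; on all other input factors, it is zero. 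The counit $\epsilon:\hat{C}(V)\to k$ is the projection $p_0$ onto $V^{\hot 0}=k$. Continuity is automatic, and coassociativity and counitality reduce to trivial bookkeeping on the factors of the triple completed tensor power.

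For the universal property, given a continuous linear $f:C\to V$ out of a topological coalgebra $(C,\Delta_C,\epsilon_C)$, I iterate the comultiplication to obtain $\Delta_C^{(n-1)}:C\to C^{\hot n}$ and set
$$\bar{f}_n := f^{\hot n}\circ \Delta_C^{(n-1)}:C\to V^{\hot n}\quad(n\geq 1),\qquad \bar{f}_0:=\epsilon_C.$$
By the universal property of the product, these assemble into a unique continuous linear map $\bar{f}:C\to \prod_{n}V^{\hot n}=\hat{C}(V)$ satisfying $p\circ \bar{f}=\bar{f}_1=f$.

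It remains to check $\bar{f}$ is a morphism of topological coalgebras. Counitality $\epsilon\circ \bar{f}=\bar{f}_0=\epsilon_C$ is immediate. For the comultiplication, I compare both sides componentwise along the factor indexed by $(m,n)$: on the left, $(\Delta\circ\bar{f})_{(m,n)}=\bar{f}_{m+n}=f^{\hot(m+n)}\circ \Delta_C^{(m+n-1)}$, while on the right,
$$((\bar{f}\hot\bar{f})\circ\Delta_C)_{(m,n)}=(\bar{f}_m\hot\bar{f}_n)\circ\Delta_C=(f^{\hot m}\hot f^{\hot n})\circ(\Delta_C^{(m-1)}\hot\Delta_C^{(n-1)})\circ\Delta_C.$$
Coassociativity of $\Delta_C$ (which says $(\Delta_C^{(m-1)}\hot\Delta_C^{(n-1)})\circ\Delta_C=\Delta_C^{(m+n-1)}$) forces agreement. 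Uniqueness follows because any coalgebra morphism $g:C\to \hat{C}(V)$ with $p\circ g=f$ must satisfy $\pi_n\circ g=f^{\hot n}\circ\Delta_C^{(n-1)}$; this is proved inductively on $n$ by applying $p^{\hot n}\circ\Delta^{(n-1)}$ to both sides and using that $g$ intertwines iterated comultiplications, which forces the $n$-th component. The adjunction statement is then a reformulation of this universal property.

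The step I would check most carefully, and the genuine technical obstacle, is the manipulation of the completed tensor product: specifically, verifying that the identifications $\hat{C}(V)^{\hot 2}\cong \prod_{m,n}V^{\hot(m+n)}$ and $V^{\hot m}\hot V^{\hot n}\cong V^{\hot(m+n)}$ are natural and compatible with the associativity constraints, so that coassociativity on $\hat{C}(V)$ really does reduce to the trivial identity on each factor. All of this is underwritten by \leref{producttensor} together with completeness of the $V^{\hot n}$; once one trusts these bookkeeping tools, the proof becomes a direct topological analogue of the tensor coalgebra construction.
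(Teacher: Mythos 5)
This theorem is not proved in the paper at all -- it is imported verbatim from Takeuchi \cite[Theorem 3.1]{takeuchi} -- so there is no in-paper argument to compare against; your proof is the standard one (and essentially Takeuchi's own): deconcatenation comultiplication defined componentwise through the identification $\hat{C}(V)\hot\hat{C}(V)\cong\prod_{m,n}V^{\hot m}\hot V^{\hot n}$ of \leref{producttensor}, and the universal map assembled from the iterated comultiplications $f^{\hot n}\circ\Delta_C^{(n-1)}$. The argument is correct, and you rightly flag the only real technical content, namely the coherence of the completed-tensor-product identifications; the one detail you gloss over is the $(0,n)$ and $(m,0)$ components of the compatibility check, where you need counitality of $C$ (in the paper's form $(\epsilon_C\hot C)\Delta_C=\gamma_C$, combined with the fact that $\bar f_n$ factors through the completion of $C$ since $V^{\hot n}$ is complete) rather than coassociativity alone.
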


Let us finish this section by a more explicit description of the topological cofree coalgebra over a discrete topological vector space.

\begin{example}
	\label{ex:cofree}
Let \(V\) be a vector space, and endow it with the discrete topology, that is we consider the object $\ul V=(id_V:V\to V)$ in $\TVS_k$. By definition we have that $\ul V\ot \ul V=(id_V\ot id_V:V\ot V\to V\ot V)$ is again discrete and hence complete which implies that $\ul V\ot \ul V=\ul V\hot \ul V$.
The cofree topological coalgebra on \(\ul V\) is then 
\[\hat{C}(V) = \prod_{n \geq 0} V^{\hat{\otimes} n} = \prod_{n \geq 0} V^{\otimes n}.\]
Denote by $\pi_i:\hat{C}(V) \to V^{\ot n}$ the natural projection on the $n$-th component of the product. Recall that $V^{\ot n}$ denotes an $n$-fold tensor product of copies of $V$. A generic element in this space is denoted by 
$$v^n_{(1)} \otimes \cdots \otimes v^n_{(n)}.$$ 
Remark that we have natural isomorphisms
$$\Delta_{i,j}:V^{\ot(i+j)}\to V^{\ot i}\ot V^{\ot j}$$
In order to distinguish the tensor product between $V^{\ot i}$ and $V^{\ot j}$ with the tensor product used to describe elements of $V^{\ot i}$ and $V^{\ot j}$, we will denote the former sometimes by $\boxtimes$. That is, we write sometimes $V^{\ot i}\ot V^{\ot j}=V^{\ot i}\boxtimes V^{\ot j}$ and we denote an element in this space as 
$$(v^{i}_{(1)} \otimes \cdots \otimes v^{i}_{(i)}) \boxtimes (v^{j}_{(1)} \otimes \cdots \otimes v^{j}_{(j)}).$$
Takeuchi's topological cofree coalgebra is a complete non-discrete topological coalgebra, where the comultiplication is a map
\[\Delta_{\hat{C}(V)} : \hat{C}(V) \to \hat{C}(V) \hotimes \hat{C}(V) = \prod_{j, k} V^{\otimes j} \otimes V^{\otimes k}\]
defined as follows: an element \((v^n_{(1)} \otimes \cdots \otimes v^n_{(n)})_n\) is sent to the element with \((j, k)\)-component
\begin{equation}
	\label{eq:Delta}
	(v^{j + k}_{(1)} \otimes \cdots \otimes v^{j + k}_{(j)}) \boxtimes (v^{j + k}_{(j + 1)} \otimes \cdots \otimes v^{j + k}_{(j + k)}).
\end{equation}
In other words, we have for any $i,j,n\in \NN$ with $n=i+j$ the following commutative diagram
\[
\begin{tikzcd}
	\hat C(V) \arrow{rr}{\Delta} \arrow{d}[swap]{\pi_n} && \hat C(V)\hat \ot\hat C(V) \arrow{d}{\pi_{i,j}:=\pi_i\hat\ot \pi_j} \\
	V^{\ot n} \arrow{rr}[swap]{\Delta_{i,j}} && 
	V^{\ot i}\ot V^{\ot j}
\end{tikzcd}
\]
The counit is given by projection on \(V^{\otimes 0} = k\), i.e. $\epsilon=\pi_0$, and there is the projection
\[p=\pi_1 : \hat{C}(V) \to V.\]
\end{example}

%

\subsection{Construction of the right adjoint}\label{se:constructionR}

In \cref{th:comonad}, we have shown that the forgetful functor 
\[U : \PMod^H \to \Vect_k\]
has a right adjoint
\[R : \Vect_k \to \PMod^H\]
and is comonadic. The aim of this section is to give an explicit description of the functor $R$ and the associated comonad \(\C = UR\). 
We start with a technical lemma. 
\begin{lemma}
	\label{le:biggestsub}
	Let $M$ be a $k$-vector space endowed with a linear map $\rho:M\to M\ot H$ (no axioms required).
	The largest subspace $N$ of $M$ (with respect to inclusion) such that $\rho(N)\subseteq N\ot H$ and $(N,\rho)$ is a partial $H$-comodule, is the set of all elements $m \in M$ that satisfy, for all $n\geq 1$ the following identities:
	\begin{enumerate}[({A}1)]
		\item $(M \otimes \epsilon \otimes H^{\ot(n - 1)})\rho^n(m) =\rho^{n-1}(m)$; \label{A1}
		\item $(M \otimes H \otimes \mu \otimes H^{\ot(n - 1)})(M \otimes H \otimes H \otimes S \otimes H^{\ot(n-1)})(M \otimes \Delta \otimes H^{\ot n}) \rho^{n + 1}(m)  \\ =(M \otimes H \otimes\mu \otimes H^{\ot (n - 1)})(M \otimes H \otimes H \otimes S \otimes H^{\ot(n - 1)})\rho^{n+2}(m)$; \label{A2}		
		\item $(M \otimes \mu \otimes H^{\ot n})(M \otimes H \otimes S \otimes H^{\ot n})(M \otimes H \otimes \Delta \otimes H^{\ot(n - 1)}) \rho^{n+1}(m)\\ =(M \otimes \mu \otimes H^{\ot n})(M \otimes H \otimes S \otimes H^{\ot n})\rho^{n+2}(m)$. \label{A3}	
	\end{enumerate}
\end{lemma}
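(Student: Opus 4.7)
The plan is to verify directly that the set $N$ described in the statement is a partial subcomodule of $M$ with the desired maximality property. First I would observe that $N$ is a linear subspace of $M$, since each of (A1), (A2), (A3) at each level $n$ is an equation of linear maps in $m$. Moreover, taking $n=1$ these conditions become, respectively, (PCM1), (PCM2), (PCM3); so once we know that $\rho$ restricts to a map $N \to N\otimes H$, the pair $(N, \rho|_N)$ will automatically be a partial $H$-comodule.

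The heart of the argument is to show $\rho(N) \subseteq N\otimes H$. Expanding $\rho(m)$ in terms of a basis of $H$ reduces this to showing that $m' := (M\otimes \pi)\rho(m) \in N$ for every $\pi \in H^*$. A straightforward induction on $n$, using only $\rho^{n+1} = (\rho \otimes H^{\otimes n})\rho^n$ and the commutativity of operations acting on disjoint tensor factors, yields the key identity
\[\rho^n(m') \;=\; (M\otimes H^{\otimes n}\otimes \pi)\,\rho^{n+1}(m).\]
All operations appearing in (A1)--(A3) act on the first few tensor factors of $\rho^{n+1}(m)$ or $\rho^{n+2}(m)$ (positions $1$ through $4$ at most), whereas $\pi$ in the formula above acts on the final factor. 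A direct computation---moving $\pi$ past each of $\Delta$, $S$, $\mu$, $\epsilon$ by introducing an extra identity on $H$---then shows that each side of (A1)--(A3) at level $n$ for $m'$ equals $(M\otimes H^{\otimes \bullet}\otimes \pi)$ applied to the corresponding side at level $n+1$ for $m$. Since $m \in N$ guarantees these latter identities, we conclude $m' \in N$, hence $\rho(N) \subseteq N\otimes H$.

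For maximality, let $N'$ be any subspace of $M$ with $\rho(N') \subseteq N' \otimes H$ such that $(N', \rho|_{N'})$ is a partial comodule, and fix $m \in N'$. By iteration $\rho^{n-1}(m) \in N' \otimes H^{\otimes(n-1)}$. Using the factorizations $\rho^{n+1} = (\rho^2 \otimes H^{\otimes(n-1)})\rho^{n-1}$ and $\rho^{n+2} = (\rho^3 \otimes H^{\otimes(n-1)})\rho^{n-1}$, the two sides of (A2) at level $n$ for $m$ become the two sides of (PCM2) applied to the $N'$-component of $\rho^{n-1}(m)$, tensored with the remaining $H^{\otimes(n-1)}$ factors. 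Since (PCM2) holds on $N'$, the two sides agree; the arguments for (A1) and (A3) from (PCM1) and (PCM3) are entirely analogous. Thus $m \in N$, so $N' \subseteq N$.

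The main obstacle is simply the careful bookkeeping of tensor positions in the commutation arguments of the second paragraph---which operation acts where after $\Delta$ has expanded the tensor, and where exactly $\pi$ ends up---but the actual computation is routine once the identity $\rho^n(m') = (M\otimes H^{\otimes n}\otimes \pi)\rho^{n+1}(m)$ is in hand.
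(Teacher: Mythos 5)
Your proof is correct and follows essentially the same route as the paper's: both directions rest on the observation that the level-$n$ conditions composed with $\rho$ become the level-$(n+1)$ conditions (equivalently, $\rho^{n+1}=(\rho^n\otimes H)\rho$), together with the observation that the sum of partial subcomodules is a partial subcomodule. The only cosmetic difference is that you verify $\rho(N)\subseteq N\otimes H$ by slicing with functionals $\pi\in H^*$, whereas the paper invokes the fact that $-\otimes H$ preserves kernels to characterise $N\otimes H$ by the tensored identities --- these are the same argument in different clothing.
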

\begin{proof}
	Suppose that \(N \subseteq M\) is a subspace satisfying axioms \ref{A1}, \ref{A2} and \ref{A3} for all \(n \geq 1\). Then, since the functor \(- \otimes H\) preserves kernels of linear maps, \(N \otimes H\) is the set of elements \(x\in M \otimes H\) for which the following identities hold for all \(n \geq 1\) 
	\begin{gather}
		([(M \otimes \epsilon \otimes H^{n - 1}) \rho^n] \otimes H)(x) = (\rho^{n - 1} \otimes H)(x); \label{A1H} \\
		([(M \otimes H \otimes \mu \otimes H^{n - 1})(M \otimes H \otimes H \otimes S \otimes H^{n-1})(M \otimes \Delta \otimes H^{n}) \rho^{n + 1}] \otimes H)(x) \nonumber \\ =([(M \otimes H \otimes\mu \otimes H^{n - 1})(M \otimes H \otimes H \otimes S \otimes H^{n - 1})\rho^{n+2}] \otimes H)(x); \label{A2H}\\
		([(M \otimes \mu \otimes H^{n})(M \otimes H \otimes S \otimes H^{n})(M \otimes H \otimes \Delta \otimes H^{n - 1}) \rho^{n+1}] \otimes H)(x) \nonumber\\ = ([(M \otimes \mu \otimes H^{n})(M \otimes H \otimes S \otimes H^{n})\rho^{n+2}] \otimes H)(x). \label{A3H}
	\end{gather}
	Take now any $m\in N$. The above equations for $x=\rho(m)$ become exactly the axioms \ref{A1}, \ref{A2} and \ref{A3} with \(n\) replaced by \(n + 1\). As these are all satisfied, we find that \(\rho(N) \subseteq N \otimes H\). Now \((N, \rho)\) is a partial comodule because \ref{A1}, \ref{A2} and \ref{A3} for \(n = 1\) are the axioms \ref{PCM1}, \ref{PCM2} and \ref{PCM3} of partial comodules.
	
	Conversely, suppose that \(N\subseteq M\) such that $(N,\rho|_N)$ is a partial comodule. We will show by induction on \(n\) that the elements of $N$ satisfy \ref{A1}, \ref{A2} and \ref{A3}. By definition, the axioms hold for \(n = 1\). When \ref{A1} is satisfied for \(n\), then it follows from (\ref{A1H}) that
	\begin{align*}
		([(M \otimes \epsilon \otimes H^{n - 1}) \rho^n] \otimes H) \rho &= (\rho^{n - 1} \otimes H) \rho \\
		\Rightarrow \quad (M \otimes \epsilon \otimes H^n)\rho^{n + 1} &= \rho^n.
	\end{align*}
	Similarly, it follows from (\ref{A2H}) and (\ref{A3H}) that \ref{A2} and \ref{A3} hold for \(n + 1\).
	
	To see that the construction of this lemma coincides with the ``largest partial comodule inside $M$", it suffices now to observe that if $N$ and $N'$ are two partial comodules with $N,N'\subseteq M$ and such that their coactions coincide with the restriction of $\rho$, then $N+N'$ is also such a partial comodule. So we can take the sum of all these and get the largest partial comodule inside $M$. By the above, this sum consists exactly of all elements in $m$ satisfying \ref{A1}, \ref{A2} and \ref{A3}.
\end{proof}

\begin{remark}
    \label{rm:generalA1}
    Applying \(\rho^\ell \otimes H^{\otimes(n - 1)}\) to both sides of \ref{A1} shows that this condition implies
    \[(M \otimes H^{\otimes \ell} \otimes \epsilon \otimes H^{\ot(n - 1)})\rho^{n + \ell}(m) =\rho^{n + \ell -1}(m)\]
    and a similar trick can be applied to \ref{A2} and \ref{A3}. 
\end{remark}

To construct a right adjoint to the forgetful functor \(U : \PMod^H \to \Vect_k,\) we should consider for every vector space \(V\), a linear map \(\rho : M \to M \otimes H,\) where \(M\) is a `large' space associated to \(V\). Applying then \cref{le:biggestsub} leads to a partial comodule. A first idea could be to consider \(V \otimes C(H),\) where \(C(H)\) is the cofree coalgebra over the vector space \(H\), and the linear map
\[\begin{tikzcd}
	V \otimes C(H) \arrow{rr}{V \otimes \Delta_{C(H)}} && V \otimes C(H) \otimes C(H) \arrow{rr}{V \otimes C(H) \otimes p} && V \otimes C(H) \otimes H.
\end{tikzcd}\]
However, this construction does not give an adjoint to \(U:\) when \(M\) is an irregular partial comodule, there is no way to define the unit \(\eta_M : M \to M \otimes C(H)\). 




To circumvent this problem, we will use the topological cofree coalgebra \(\hat{C}(H)\) instead, where we endow the vector space \(H\) with the discrete topology. 
%
%
%
Remark that in this case \(\Delta_{\hat{C}(H)}\) is completely unrelated to the comultiplication of the Hopf algebra \(H\).

Consider the object
\[V \hotimes \hat{C}(H) = \prod_{n \geq 0} V \otimes H^{\otimes n} = \{f\in \Hom_k(k[t]\to \bigoplus_{n\in \NN} V\ot H^{\ot n})~|~f \text{ is of degree 0}\}.\]
Remark that degree $0$ means that $f(t^{m})\in V\ot H^{\ot m}$ for all $m\in \NN$. In this paper, we will use the product notation and avoid to work with degree zero functions. The \(k\)-component of an element \(f \in \prod_i V_i\) will be denoted as \(f^k \in V_k\).

Define the map \(\sigma\) as the composition
\begin{equation}
	\label{eq:sigma}
\begin{tikzcd}
	V \hotimes \hat{C}(H) \arrow{rr}{V \hotimes \Delta_{\hat{C}(H)}} && V \hotimes \hat{C}(H) \hotimes \hat{C}(H) \arrow{rr}{V \hotimes \hat{C}(H) \hotimes p} && V \hotimes \hat{C}(H) \hotimes H.
\end{tikzcd}
\end{equation}

We define \(R^0(V)\) as the largest linear subspace of \(V \hotimes \hat{C}(H)\) such that \(\sigma(R^0(V)) \subseteq R^0(V) \otimes H\). In other words, \(R^0(V)\) is the sum of all subspaces $M$ of \(V \hotimes \hat{C}(H)\) such that $\sigma(M)\subseteq M\ot H$.


\begin{lemma}
	\label{le:R0}
	The space \(R^0(V)\) is the set of elements \(f \in V \hotimes \hat{C}(H)\) such that for every \(m \geq 0\), there are finite families of elements \(g_i \in V \hotimes \hat{C}(H), h_i \in H^{\otimes m}\) such that for all \(k \geq m\)
	\begin{equation}
		\label{eq:representative}
		f^k = \sum_i g_i^{k - m} \otimes h_i.
	\end{equation}
\end{lemma}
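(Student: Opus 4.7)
The plan is to establish both inclusions between $R^0(V)$ and the set $N$ of elements of $V\hot \hat C(H)$ described in the statement. A preliminary observation that drives everything is that on components the map $\sigma$ acts as a shift: unwinding its definition in \equref{sigma} using the componentwise description of $\Delta_{\hat C(H)}$ given in \equref{Delta} yields $\sigma(f)^{(j)}=f^{j+1}$ under the canonical identification $V\otimes H^{\otimes (j+1)}\cong V\otimes H^{\otimes j}\otimes H$, and by induction $\sigma^m(f)^{(j)}=f^{j+m}$ in $V\otimes H^{\otimes j}\otimes H^{\otimes m}$ for every $m\geq 0$. Note also that $N$ is clearly a linear subspace, since for each fixed $m$ the defining condition is linear in $f$.

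For the inclusion $R^0(V)\subseteq N$, I would iterate the defining property $\sigma(R^0(V))\subseteq R^0(V)\otimes H$ to obtain $\sigma^m(R^0(V))\subseteq R^0(V)\otimes H^{\otimes m}$ for every $m$. Then for $f\in R^0(V)$, writing $\sigma^m(f)=\sum_i g_i\otimes h_i$ as a finite sum with $g_i\in R^0(V)\subseteq V\hot \hat C(H)$ and $h_i\in H^{\otimes m}$, and reading off the $(k-m)$-th component for each $k\geq m$, will yield exactly the identity $f^k=\sum_i g_i^{k-m}\otimes h_i$, placing $f\in N$.

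For the reverse inclusion $N\subseteq R^0(V)$ I will use the maximality of $R^0(V)$: it is enough to show that $\sigma(N)\subseteq N\otimes H$. Given $f\in N$, I apply its defining condition at $m=1$ and (after passing to a linearly independent subfamily in $H$) obtain $g_1,\dots,g_r\in V\hot \hat C(H)$ and linearly independent $h_1,\dots,h_r\in H$ with $f^k=\sum_i g_i^{k-1}\otimes h_i$ for all $k\geq 1$. The componentwise description of $\sigma$ then gives $\sigma(f)=\sum_i g_i\otimes h_i$ in the algebraic tensor $(V\hot \hat C(H))\otimes H$, so the entire matter reduces to verifying that each $g_i$ also lies in $N$.

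This last step is the main obstacle. To settle it, I would fix $m'\geq 0$ and also apply the defining condition for $f$ at $m=m'+1$, producing $G_j\in V\hot \hat C(H)$ and $H_j\in H^{\otimes(m'+1)}$ with $f^k=\sum_j G_j^{k-m'-1}\otimes H_j$ for all $k\geq m'+1$. Completing $\{h_i\}$ to a basis $\{e_\alpha\}$ of $H$ with $h_i=e_{\alpha_i}$, choosing a basis $\{e^{m'}_\beta\}$ of $H^{\otimes m'}$, and expanding $H_j=\sum_{\beta,\alpha}c_{j,\beta,\alpha}\,e^{m'}_\beta\otimes e_\alpha$ in $H^{\otimes m'}\otimes H$, I will then rewrite both available expressions for $f^k$ inside the common space $V\otimes H^{\otimes(k-m'-1)}\otimes H^{\otimes m'}\otimes H$. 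Comparing coefficients of the basis element $e^{m'}_\beta\otimes h_i$, which is legitimate by linear independence, will produce
\[g_i^{k'}\;=\;\sum_{j,\beta}c_{j,\beta,\alpha_i}\,G_j^{k'-m'}\otimes e^{m'}_\beta\qquad\text{for every }k'\geq m',\]
and this exhibits $g_i$ as satisfying the defining condition of $N$ at parameter $m'$ with witnesses $\tilde g_{j,\beta}:=c_{j,\beta,\alpha_i}G_j\in V\hot\hat C(H)$ and $\tilde h_{j,\beta}:=e^{m'}_\beta\in H^{\otimes m'}$. Since $m'$ was arbitrary this yields $g_i\in N$, so $\sigma(N)\subseteq N\otimes H$ and the proof concludes.
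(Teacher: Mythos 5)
Your proof is correct and follows essentially the same route as the paper's: the forward inclusion by iterating $\sigma$ (the paper organizes this same computation as an induction along a descending filtration $R_1(V)\supseteq R_2(V)\supseteq\cdots$ whose intersection is shown to equal $R^0(V)$), and the reverse inclusion by invoking maximality after showing that the coefficients $g_i$ of the $m=1$ decomposition again satisfy the defining conditions, which both you and the paper extract by comparing coefficients against the linearly independent family $h_i$. Your explicit extension of $\{h_i\}$ to a basis in that last step is in fact slightly more careful than the paper's direct decomposition $h'_j=\sum_i h''_{ij}\otimes h_i$, but the argument is the same.
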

\begin{proof}
	First, let us remark that $\sigma$ sends an element \(f \in V \hotimes \hat{C}(H)\) with components (\(n \geq 0\))
	\[f^n = \pi_n(f) =  v^n_{(0)} \otimes v^n_{(1)} \otimes \cdots \otimes v^n_{(n)}\] 
	to the element of \(V \hotimes \hat{C}(H) \hotimes H = \prod_{m \geq 0} (V \boxtimes H^{\otimes m} \boxtimes H)\) with components (\(m + 1 = n \geq 1\))
	\[\sigma(f)^n=\pi_n(\sigma(f))=v^n_{(0)} \boxtimes \left(v^n_{(1)} \otimes \cdots \otimes v^n_{(n - 1)}\right) \boxtimes v^n_{(n)}.\] 
	Hence for this element to be in \((V \hotimes \hat{C}(H)) \otimes H\), there need to exist finite families of elements \(g_i \in V \hotimes \hat{C}(H)\) and \(h_i \in H\) such that $\sigma(f)=\sum_i g_i\ot h_i$. In view of the explicit form of $\sigma$ presented above, this means more precisely that for all \(k \geq 1\) we have an identity
	\[f^k = \sum_i g_i^{k - 1} \otimes h_i.\]
	We call the set of all elements $f\in V\hotimes \hat C(H)$ with this property $R_1(V)$. More generally, for each $j\ge 1$, we denote by $R_j(V)$ the set of all \(f \in V \hotimes \hat{C}(H)\) such that for every \(m \leq j\), there are finite families of elements \(g_i \in V \hotimes \hat{C}(H), h_i \in H^{\otimes m}\) such that for all \(k \geq m\)
	\[f^k = \sum_i g_i^{k - m} \otimes h_i.\]
	From the definition it is then clear the we have a descending chain of subspaces
	$$R_1(V)\supseteq R_2(V)\supseteq \cdots \supseteq R_j(V)\supseteq R_{j+1}(V) \supseteq \cdots$$
	Our aim is to prove that 
	\[R^0(V) = \bigcap_{j \geq 1} R_j(V).\]
	Let us start by showing by induction that $R^0(V)$ is contained in the intersection. From the observations above we know that $R^0(V)\subseteq R_1(V)$. 
	 Suppose now that \(R^0(V) \subseteq R_j(V)\) for some \(j \geq 1\) and let us show that \(R^0(V) \subseteq R_{j + 1}(V)\). Indeed, by the definition of $R^0(V)$ and our new assumption, the image of an element \(f \in R^0(V)\) under \(\sigma\) should end up in \(R^0(V)\ot H\subseteq R_j(V) \otimes H\), hence there exist finite families of elements \(g_i \in R_j(V), h_i \in H\) such that for \(k \geq 1\)
	\[f^k = \sum_i g_i^{k - 1} \otimes h_i.\]
	Now fix any $m\le j$. As $g_i \in R_j(V)$, we know that there exist some \(g'_{\ell_i} \in V \hotimes \hat{C}(H)\), \(h'_{\ell_i} \in H^{\otimes m}\) such that for all \(k \geq m + 1\) 
	we have \(g_i^{k - 1} = \sum_{\ell_i} {g'}_{\ell_i}^{k - m - 1} \otimes h'_{\ell_i}\). Combining both identities, we find that 
	$$f^k =\sum_{i,\ell_i} {g'}_{\ell_i}^{k - m - 1} \otimes h'_{\ell_i} \ot h_i \in V\hot \hat C(H) \ot H^{\ot m+1}.$$
	As these elements $g'_{\ell_i}$ and $h'_{\ell_i} \ot h_i\in H^{\ot m+1}$ can be found for any $m\le j$ (i.e. any $m+1\le j+1$), 
	we may conclude that \(f \in R_{j + 1}(V)\).
	This shows that
	\[R^0(V) \subseteq \bigcap_{j \geq 1} R_j(V).\]
	Conversely, take any \(f \in \bigcap_{j \geq 1} R_j(V)\). We want to show that \(\sigma(f) \in \bigcap_{j \geq 1} R_j(V) \otimes H\), which will conclude the proof by the maximality of $R^0(V)$. 
	Since $f\in R_1(V)$, we can write
	\[f^k = \sum_i g_i^{k - 1} \otimes h_i\]
	where the \(h_i \in H\) are supposed to be linearly independent. 
	Take \(m \geq 1\). Since also $f\in R_{m}(V)$, there are \(g'_j \in V \hotimes \hat{C}(H)\), \(h'_j \in H^{\otimes m}\) such that for all $k\ge m$
	\[f^k = \sum_j {g'}_j^{k - m} \otimes h'_j.\]
	We can write \(h'_j = \sum_i h''_{ij} \otimes h_i\) for some \(h''_{ij}\in H^{\otimes (m - 1)}\), and by the linear independence of the elements $h_i$ it follows that that
	\[g_i^{k - 1} = \sum_j {g'}_j^{k - m} \otimes h''_{ij},\]
	which shows that \(g_i \in \bigcap_{j \geq 1} R_j(V)\).	
\end{proof}

\begin{remarks}
	\label{rm:RV}
\begin{enumerate}[(i)]
    \item The description of \(R^0(V)\) in \cref{le:R0} should be compared to the usual cofree comodule \(V \otimes C(H)\), where \(C(H)\) is the usual cofree coalgebra over \(H\). This space can be described as the subset of elements \(f \in V \hotimes \hat{C}(H)\) such that there exist finite families of elements \(g_i \in V \otimes C(H)\) and \(h_i \in C(H)\) such that for every \(m \geq 0, k \geq m\)
	\[f^k = \sum_i g_i^{k - m} \otimes h_i^m.\]
	Therefore, $f$ satisfies (\ref{eq:representative}), so \(V \otimes C(H) \subseteq R^0(V)\).
	\item When \(H\) is finite dimensional, then the representability condition (\ref{eq:representative}) becomes trivial and \(R^0(V) = V \hotimes \hat{C}(H)\). Indeed, let \(\{h_1, \dots, h_n\}\) be a basis for \(H\) and fix any $m\ge 0$. Consider all tuples $\ul i:=(i_1,\ldots,i_m)$ where  \(1 \leq i_1, \dots, i_m \leq n\) and denote \(h_{\ul i}=h_{i_1} \otimes \cdots \otimes h_{i_m} \in H^{\otimes m}\). Then we define $g_{\ul i}\in V\hotimes \hat C(H)$ by putting for each degree $k\ge 0$,
	\[ g_{\ul i}^k := (V\ot H^{\ot k}\ot h^*_{i_1}\ot \cdots h^*_{i_m})(f^{k+m})\]
    where the $h^*_i$ are the dual base elements for $H$. Then we find indeed that  for all $k\ge m$, $f^k =\sum_{\ul i} g_{\ul i}^{k-m}\ot h_{\ul i}$.
\end{enumerate}
	
\end{remarks}

We now define \(R(V)\) to be the largest subspace of \(R^0(V)\) such that \((R(V), \sigma)\) becomes a partial comodule (see Lemma \ref{le:biggestsub} for a characterization of this subspace).

\begin{lemma}\label{functorR}
	With notation as above, \(R\) defines a functor \(\mathsf{Vect}_k \to \mathsf{PMod}^H\), on objects given by \(V \mapsto R(V)\), and on morphisms by \(\varphi \mapsto (\varphi \hotimes \hat{C}(H))_{|R(V)}\).
\end{lemma}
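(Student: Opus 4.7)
The plan is to prove the lemma in two steps: first check that $R(V)$ is a partial comodule (this is essentially built in), and then, for each linear map $\varphi : V \to W$, verify that the map $\varphi \hotimes \hat C(H)$ restricts to a partial comodule morphism $R(V) \to R(W)$. Functoriality (preservation of identities and composition) then follows immediately from the fact that $(-) \hotimes \hat C(H)$ is a functor on $\mathsf{TVS}_k$ and that restriction is compatible with composition.

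The first step is cheap: by the very construction of $R(V)$ as the largest subspace of $R^0(V)$ that, together with the restriction of $\sigma$, satisfies the axioms of \cref{le:biggestsub}, the pair $(R(V), \sigma|_{R(V)})$ is a partial $H$-comodule. The existence of such a largest subspace is guaranteed because sums of partial subcomodules of $(R^0(V), \sigma|_{R^0(V)})$ are again partial subcomodules, as is recorded in the proof of \cref{le:biggestsub}.

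For the second step, the crucial observation is the \emph{naturality of $\sigma$ in $V$}. Since both constituents of the composition (\ref{eq:sigma}) are natural in $V$ (the map $V \hotimes \Delta_{\hat C(H)}$ is clearly natural, and so is $V \hotimes \hat C(H) \hotimes p$), the square
\[
\begin{tikzcd}
V \hotimes \hat C(H) \arrow{r}{\sigma_V} \arrow{d}[swap]{\varphi \hotimes \hat C(H)} & V \hotimes \hat C(H) \hotimes H \arrow{d}{\varphi \hotimes \hat C(H) \hotimes H} \\
W \hotimes \hat C(H) \arrow{r}{\sigma_W} & W \hotimes \hat C(H) \hotimes H
\end{tikzcd}
\]
commutes, and more generally $\sigma_W^n \circ (\varphi \hotimes \hat C(H)) = (\varphi \hotimes \hat C(H) \otimes H^{\otimes n}) \circ \sigma_V^n$ for all $n \geq 0$. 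From this we deduce the two sub-claims needed. First, $(\varphi \hotimes \hat C(H))(R^0(V)) \subseteq R^0(W)$: given $f \in R^0(V)$ with representation $f^k = \sum_i g_i^{k-m} \otimes h_i$ as in \cref{le:R0}, applying $\varphi \otimes H^{\otimes k}$ componentwise yields exactly the same form of representation for $(\varphi \hotimes \hat C(H))(f)$, with $g_i$ replaced by $(\varphi \hotimes \hat C(H))(g_i) \in W \hotimes \hat C(H)$. Second, setting $N := (\varphi \hotimes \hat C(H))(R(V)) \subseteq R^0(W)$, the commuting square shows that $\sigma_W(N) \subseteq N \otimes H$, and moreover every element of $N$ satisfies axioms \ref{A1}, \ref{A2}, \ref{A3} of \cref{le:biggestsub} because these axioms are expressed entirely in terms of $\sigma^n$, $\Delta$, $\mu$, $S$ and $\epsilon$, all of which commute with $\varphi \hotimes \hat C(H)$ (applied in the appropriate tensor factor). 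Hence $N$ is a partial subcomodule of $R^0(W)$, and by maximality $N \subseteq R(W)$.

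Thus $(\varphi \hotimes \hat C(H))|_{R(V)} : R(V) \to R(W)$ is well defined; the same commuting square above, restricted to $R(V)$, is exactly the condition that it is a morphism of partial comodules. The identity on $V$ induces the identity on $R(V)$ (since $\id_V \hotimes \hat C(H) = \id$), and for $V \xrightarrow{\varphi} W \xrightarrow{\psi} W'$ we have $(\psi \varphi) \hotimes \hat C(H) = (\psi \hotimes \hat C(H))(\varphi \hotimes \hat C(H))$, whose restriction to $R(V)$ is the corresponding composition. I do not foresee a genuine obstacle: the only point requiring care is distinguishing the representability condition of \cref{le:R0} (preserved under $\varphi \hotimes \hat C(H)$ by a direct calculation) from the partial comodule axioms of \cref{le:biggestsub} (preserved by naturality of $\sigma$), and then using the maximality built into the definition of $R(W)$ to land the image there.
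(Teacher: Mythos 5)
Your proposal is correct and follows essentially the same route as the paper's proof: $R(V)$ is a partial comodule by construction, the key point is the commutation $(\varphi \hotimes \hat C(H) \otimes H)\sigma_V = \sigma_W(\varphi\hotimes\hat C(H))$, the representability condition of \cref{le:R0} is preserved by applying $\varphi$ componentwise to the $g_i$, and the axioms of \cref{le:biggestsub} are preserved by naturality, so the image lands in $R(W)$. The only (cosmetic) difference is that you phrase the last step via maximality of $R(W)$ rather than directly via the elementwise characterization, but these are equivalent by \cref{le:biggestsub}.
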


\begin{proof}
    By construction, $R(V)$ is a partial $H$-comodule for any vector space $V$. Given a linear map \(\varphi : V \to W\), consider
	\[\varphi \hotimes \hat{C}(H) : V \hotimes \hat{C}(H) \to W \hotimes \hat{C}(H),\]
	which clearly commutes with \(\sigma\); more precisely
	\begin{equation}\label{commuteswithsigma}
	(\varphi \hotimes \hat{C}(H) \hotimes \hat{C}(H) ) \sigma_V = \sigma_W (\varphi \hotimes \hat{C}(H))
	\end{equation}
	We have to show that the image of the restriction of $\varphi \hotimes \hat{C}(H)$ to $R(V)$ lies in $R(W)$.
	Take \(f \in V \hotimes \hat{C}(H)\) with components
	\[f^n = v^n_{(0)} \otimes v^n_{(1)} \otimes \cdots \otimes v^n_{(n)}.\]
	Then \((\varphi \hotimes \hat{C}(H))(f)\) has components
	\[\varphi(v^n_{(0)}) \otimes v^n_{(1)} \otimes \cdots \otimes v^n_{(n)}.\]
	When \(f \in R^0(V)\), then clearly \((\varphi \hotimes \hat{C}(H))(f) \in R^0(W)\), where the needed finite family \(g'_i \in W \hotimes \hat{C}(H)\) is found by applying \(\varphi \hotimes \hat{C}(H)\) to \(g_i \in V \hotimes \hat{C}(H)\). Moreover, by \eqref{commuteswithsigma}, if  $f$ satisfies the conditions of \cref{le:biggestsub} (i.e.\ \(f \in R(V)\)), then so does  \((\varphi \hotimes \hat{C}(H))(f)\). This shows that $R(\varphi):= (\varphi \hotimes \hat{C}(H))_{|R(V)}$ is a morphism in $\PMod^H$. The functoriality is clear. 
\end{proof}

\begin{theorem}
	\label{th:adjoint}
	The functor \(R\) from \cref{functorR} is a right adjoint to the forgetful functor $U:\PMod^H\to \Vect_k$.
\end{theorem}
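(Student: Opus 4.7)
The plan is to construct an explicit unit and counit for the adjunction $U \dashv R$ and verify the triangle identities. For the \emph{counit} at $V \in \Vect_k$, I will simply take the linear map
\[
\varepsilon_V : U(R(V)) \to V, \qquad f \mapsto \pi_0(f) = f^0,
\]
i.e.\ the $V^{\otimes 0} = k$ component of the projection induced by the counit of $\hat C(H)$. Naturality in $V$ is immediate from the fact that $R(\psi)$ acts as $\psi \hot \hat C(H)$ and therefore commutes with $\pi_0$.

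For the \emph{unit} at $(M,\rho) \in \PMod^H$, the natural candidate is
\[
\eta_M : M \to R(U(M)), \qquad m \mapsto \bigl(\rho^n(m)\bigr)_{n \geq 0} \in M \hot \hat C(H).
\]
The main thing to check is that $\eta_M(m)$ actually lies in the subspace $R(U(M))$, and this is where the work is. First I would show $\eta_M(m) \in R^0(U(M))$ by invoking the factorisation $\rho^{k} = (\rho^{k-m} \otimes H^{\otimes m}) \rho^m$ to exhibit the required elements $g_i \in M \hot \hat C(H)$ and $h_i \in H^{\otimes m}$ in the representability condition \eqref{eq:representative} of \cref{le:R0}. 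Next, a direct comparison of components shows the key intertwining identity
\[
\sigma \circ \eta_M = (\eta_M \otimes H) \circ \rho,
\]
because $\sigma(f)$ has $n$-th component $f^{n+1}$ (reparsed in $V\otimes H^{\otimes n}\otimes H$) and $\rho^{n+1}(m) = (\rho^n \otimes H)\rho(m)$. Let $N := \eta_M(M) \subseteq R^0(U(M))$. The identity above gives $\sigma(N) \subseteq N \otimes H$, and since $\rho^0 = M$ the map $\eta_M$ is injective, so the pair $(N,\sigma|_N)$ is isomorphic via $\eta_M$ to the partial comodule $(M,\rho)$ and in particular satisfies the partial comodule axioms. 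By the maximality characterisation of $R(U(M))$ (\cref{le:biggestsub}) it follows that $N \subseteq R(U(M))$, so $\eta_M$ is well-defined, and the intertwining identity precisely says that it is a morphism in $\PMod^H$. Naturality of $\eta$ in $M$ is then straightforward: for $\varphi : M \to N$ in $\PMod^H$, iterating $(\varphi \otimes H)\rho_M = \rho_N \varphi$ gives $(\varphi \otimes H^{\otimes n})\rho_M^n = \rho_N^n \varphi$ componentwise.

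For the triangle identities, the first one, $\varepsilon_{U(M)} \circ U(\eta_M) = \id_{U(M)}$, is immediate: $(\eta_M(m))^0 = \rho^0(m) = m$. For the second, $R(\varepsilon_V) \circ \eta_{R(V)} = \id_{R(V)}$, I compute componentwise. For $f \in R(V)$ one has $\eta_{R(V)}(f) \in R(R(V)) \subseteq R(V) \hot \hat C(H)$ with $n$-th component $\sigma^n(f) \in R(V) \otimes H^{\otimes n}$; since $R(\varepsilon_V)$ acts as $\varepsilon_V \hot \hat C(H)$, its $n$-th component applies $\varepsilon_V \otimes H^{\otimes n}$ to $\sigma^n(f)$. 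Using that $\sigma^n(f)$, viewed in $V \hot \hat C(H) \otimes H^{\otimes n}$ through the inclusion $R(V) \subseteq V \hot \hat C(H)$, has its degree-$0$ slice in the first factor equal to $f^n \in V \otimes H^{\otimes n}$, one concludes that the $n$-th component of $R(\varepsilon_V)(\eta_{R(V)}(f))$ is $f^n$, i.e.\ the whole expression equals $f$. The main obstacle is the verification that $\eta_M(m) \in R(U(M))$; once this is established via the intertwining identity and the maximality property, everything else is a routine check of the Yoneda-style triangle identities.
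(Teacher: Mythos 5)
Your proposal is correct and follows essentially the same route as the paper: the same unit \(\eta_M(m)=(\rho^n(m))_{n\ge 0}\), the same counit \(\pi_0\), the same use of the factorisation \(\rho^k=(\rho^{k-m}\otimes H^{\otimes m})\rho^m\) to verify the representability condition of \cref{le:R0}, and the same componentwise check of the two triangle identities. The one place where you genuinely diverge is the verification that \(\eta_M(m)\) lands in \(R(U(M))\) and not merely in \(R^0(U(M))\): the paper checks the infinite family of axioms \ref{A1}--\ref{A3} of \cref{le:biggestsub} directly (via \cref{rm:generalA1} and equation \equref{actionsigma}), whereas you first prove the intertwining identity \(\sigma\circ\eta_M=(\eta_M\otimes H)\circ\rho\), observe that \(\eta_M\) is injective because its degree-\(0\) component is the identity, transport the partial comodule structure of \((M,\rho)\) to the image \(N=\eta_M(M)\subseteq R^0(U(M))\), and invoke the maximality of \(R(U(M))\) among \(\sigma\)-stable partial subcomodules of \(R^0(U(M))\). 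This is a legitimate and arguably cleaner argument --- it replaces an infinite list of identity verifications by a single transport-of-structure observation --- and it has the added economy that the intertwining identity, which the paper proves separately afterwards to show \(\eta_M\) is a morphism of partial comodules, is established once and reused. The only point to be careful about (which you do handle) is that the maximality in the definition of \(R(V)\) is taken inside \(R^0(V)\), so the containment \(N\subseteq R^0(U(M))\) must be secured before the maximality argument is applied.
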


\begin{proof}	
	We start by defining the unit for the adjunction.
	For a partial comodule \((M, \rho)\), consider the linear map
	\[\eta_M : M \to M \hotimes \hat{C}(H) : m \mapsto (\rho^n(m))_{n \geq 0}.\]
	We claim that \(\eta_M\) corestricts to a partial comodule morphism \(M \to RU(M)\).
	
	Let us first show that \(\eta_M(m) \in R^0U(M)\). Fix a basis \(\{h_i \mid i \in I\}\) for \(H\). Then
	\[\rho(m) = \sum_{j_1 \in J_1} m_{j_1} \otimes h_{j_1}\]
	for a finite \(J_1 \subseteq I\) and some elements \(m_{j_1}\in M\). Inductively we get for every \(k \geq \ell\)
	\[\rho^{k}(m) = (\rho^{k - \ell} \otimes H^{\otimes \ell}) \rho^{\ell}(m) = \sum_{j_1 \in J_1, \dots, j_{\ell} \in J_\ell} \rho^{k - \ell}(m_{j_1 \cdots j_\ell}) \otimes h_{j_\ell} \otimes \cdots \otimes h_{j_1}\]
	for finite sets \(J_1, \dots, J_\ell \subseteq I\) and \(m_{j_1 \cdots j_\ell} \in M\).
%
	Now fix any $\ell\ge 0$ and take
	\begin{align*}
		g_{j_1 \cdots j_\ell} = (\rho^{k - \ell}(m_{j_1 \cdots j_\ell}))_{k \geq \ell} &\in UM\hotimes \hat C(H) \\
		h_{j_1 \cdots j_\ell} = h_{j_1} \otimes \cdots \otimes h_{j_\ell} &\in H^{\ot \ell}
	\end{align*}
	then it is clear that with these elements \(\eta_M(m)\) satisfies \eqref{eq:representative}, hence \(\eta_M(m) \in R^0U(M)\) by \cref{le:R0}.
	
	To show that \(\eta_M(m)\) satisfies the list of axioms of Lemma \ref{le:biggestsub}, remark that for \(f = (\rho^k(m))_k\), we have
	\begin{equation}
		\label{eq:actionsigma}
		\sigma^\ell(f) = \sum_{j_1 \in J_1, \dots, j_{\ell} \in J_\ell} (\rho^{k - \ell}(m_{j_1 \cdots j_\ell}))_{k \geq \ell} \otimes h_{j_\ell} \otimes \cdots \otimes h_{j_1},
	\end{equation}
which is in essence a rewriting of \((\rho^k(m))_{k}\).
	Because \((M, \rho)\) is a partial comodule, it satisfies the identities of \cref{le:biggestsub} and those from \cref{rm:generalA1}. 
	Now it follows from (\ref{eq:actionsigma}) that \(f\) satisfies the identities of \cref{le:biggestsub} with respect to the coaction \(\sigma\).
	
	Let us show that \(\eta_M\) is a morphism of partial comodules. We need to show that the diagram
	\[
	\begin{tikzcd}
		M \arrow{r}{\eta_M} \arrow{d}{\rho} & RU(M) \arrow{d}{\sigma} \\
		M \otimes H \arrow{r}{\eta_M \otimes H} & RU(M) \otimes H
	\end{tikzcd}
	\]
	commutes. On one hand we find \((\eta_M \otimes H) \rho(m) = \sum_{j \in J_1} (\rho^n(m_j))_n \otimes h_j\), and on the other hand it follows from \eqref{eq:actionsigma} that \[\sigma  \eta_M(m) = \sigma((\rho^n(m))_n) = \sum_{j \in J_1} (\rho^n(m_j))_n \otimes h_j.\] Finally, when \(f : M \to N\) is a morphism between the partial comodules \((M, \rho_M)\) and \((N, \rho_N),\) then \(\rho^n_N f = (f \otimes H^n)\rho_M^n\) for every \(n \in \N,\) hence \(\eta_N f = RU(f) \eta_M,\) which shows that \(\eta\) is a natural transformation. 
	
	The counit of the adjunction will be the restriction of 
	\[V \hotimes \epsilon_{\hat{C}(H)} : V \hotimes \hat{C}(H) \to V \hotimes k = V\]
	to a linear map \(\epsilon_V : UR(V) \to V\), which is the restriction of the projection $\pi_0$. 
	
	To finish the proof, we have to check the unit-counit relations.
	Let \(M\) be a partial comodule. Then the composition of linear maps
	\[
	\begin{tikzcd}
		U(M) \arrow{rr}{U(\eta_M)} && URU(M) \arrow{rr}{\epsilon_{U(M)}} && U(M)
	\end{tikzcd}\]
	maps an \(m \in M\) to \(\epsilon_{U(M)}((\rho^n(m))_n) = \rho^0(m) = m\).
	
	For the second unit-counit condition, let \(V\) be a vector space and recall that 
	$R(V)\subseteq V\hot \hat C(H)=\prod_n V\ot H^{\ot n}$. For a topological vector space \(W\), we denote by \(W_d\) the underlying vector space endowed with the discrete topology. Then $RUR(V)\subseteq (V\hot \hat C(H))_d \hot \hat C(H)=\prod_{\ell} \left(\prod_k V\ot H^{\ot k}\right) \ot H^{\ot \ell}$.
	Let \(\{h_i \mid i \in I\}\) a basis for \(H\). For an element \(f \in R(V)\) with components
	\[f^n = \sum_{j_1 \in J_1, \dots, j_n \in J_n} v^n_{j_n \cdots j_1} \otimes h_{j_n} \otimes \cdots \otimes h_{j_1}\]
	where each \(J_i \subseteq I\) is finite (these sets exist exactly because \(f \in R^0(V)\)),
	we find that $\eta_{R(V)}(f)\in RUR(V)$ is given by the element of $(V\hot \hat C(H))_d \hot \hat C(H)=\prod_{\ell} \left(\prod_k V\ot H^{\ot k}\right) \ot H^{\ot \ell}$ with the following \(\ell\)-components:
	\[\sum_{j_1 \in J_1, \dots, j_{k + \ell} \in J_{k + \ell}} \left(v^{k + \ell}_{j_{k + \ell} \cdots j_{1}} \otimes h_{j_{k + \ell}} \otimes \cdots \otimes h_{j_{\ell + 1}}\right)_{k \geq 0} \otimes \left(h_{j_{\ell}} \otimes \cdots \otimes h_{j_{1}}\right).\]
	Then $R(\epsilon_V)$ projects the first tensorand on its $k=0$ component, so that we obtain
	$$[R(\epsilon_V) \eta_{RV}(f)]^\ell = \sum_{j_1 \in J_1, \dots, j_\ell \in J_\ell} v^\ell_{j_\ell \cdots j_1} \otimes h_{j_\ell} \otimes \cdots \otimes h_{j_1} =f^\ell$$
	as needed.
\end{proof}

As we have observed in \cref{th:comonad}, the functor $U:\PMod^H\to \Vect_k$ is comonadic, which means that the category of partial comodules is equivalent to the Eilenberg-Moore category over a comonad $\C$ on $\Vect_k$. Let us make this construction more concrete by means of the explicit form of the right adjoint of $U$ as described above. The functor underlying the comonad $\C$ is given by the composition $UR$. For any vector space $V$ we then find that $\C(V)$ is given by the subspace of $V\hot \hat C(H)=\prod_{n\ge 0} V\ot H^{\ot n}$ given by all elements satisfying the conditions of \cref{le:biggestsub} and \cref{le:R0}. The counit $\epsilon:\C=UR\to id$ of the comonad is given by the counit of the adjunction $(U,R)$, that is the projection on the $0$-th component. The comultiplication $\nabla_V:\C(V)\to\C\C(V)$ is given by 
$U\eta R:\C=UR\to URUR=\C\C$, where $\eta$ is the unit of the adjunction $(U,R)$. Take any $f\in \C(V)$. As $f\in R^0(V)$, for each $m\ge 0$, there exist $g^{(m)}_i\in V\hot \hat C(H)$ and $h_i^{(m)}\in H^{\ot n}$ such that for all \(n \geq m\)
$$f^n=\sum_i (g_i^{(m)})^{n - m}\ot h_i^{(m)}$$
Then the comultiplication is given by 
$$\nabla_V((f^n)_n) = \left(\sum_i (g^{(m)}_i)^n \ot h^{(m)}_i\right)_{n,m}\in \prod_{n,m\ge 0} V\ot H^{\ot n}\ot H^{\ot m}.$$


Recall that an object in the Eilenberg-Moore category \(\Vect_k^\C,\) is a pair \((V, \delta),\) where \(V\) is a vector space and \(\delta\) a linear map \(V \to \C(V)\) such that
\begin{gather}
	\epsilon_V \delta = V, \label{eq:EM1}\\
	\C(\delta) \delta = \nabla_V \delta. \label{eq:EM2}
\end{gather}
A morphism \((V, \rho) \to (V', \rho')\) in \(\Vect_k^\C\) is a linear map \(f : V \to V'\) such that \(\C(f) \rho = \rho' f\). 

Given a partial comodule \((M, \rho)\), the underlying vector space $M$ can be turned into an Eilenberg-Moore object over \(\C\) by means of the coaction
$$\delta=\eta_M: M\to \C(M)=UR(M), m\mapsto (\rho^n(m))_{n\ge 0},$$
which can be checked to satisfy (\ref{eq:EM1}) and (\ref{eq:EM2}). This construction yield a functor 
\(K : \PMod^H \to \Vect_k^\C\) which makes the following diagram commute (where $U$ denote the forgetful functors)
\[
\xymatrix{
\Vect_k^\C \ar[drr]_U && \PMod^H \ar[ll]_-K \ar@<.5ex>[d]^-U \\
&& \Vect_k \ar@<.5ex>[u]^-R
}
\]
By \cref{th:comonad}, the functor $K$ is an equivalence, even an isomorphism, of categories.
	The inverse of the comparison functor $K$ is described as follows: given an Eilenberg-Moore object \((V, \delta)\) in \(\Vect_k^\C\) (hence \(\delta\) is a linear map \(V \to \C(V) \subseteq V \hotimes \hat{C}(H)\)), 
	\[\rho = \pi_1 \delta : V \to V \otimes H\]
	defines a partial comodule structure on \(V\). (Recall that \(\pi_1 : \prod_{n \geq 0} V \otimes H^{\otimes n} \to V \otimes H\) is the projection on the \(1\)-component.)  


\subsection{Finite dimensional case}

\begin{proposition}\label{pr:finitedimensional}
	Let \(H\) be a finite dimensional Hopf algebra and denote as before the comonad associated to the adjunction between $\PMod^H$ and $\Vect_k$ by $\C$. Then we have an isomorphism of comonads \(\C \cong \Hom_k(H^*_{par}, -)\).
\end{proposition}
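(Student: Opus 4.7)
The plan is to exploit the isomorphism of categories $\Phi: \PMod^H \to {}_{(H^*)_{par}}\Mod$ afforded by \thref{Hpar} combined with the dual basis construction recalled in the discussion immediately before \thref{le:Hfinreg}. Since $\Phi$ is defined by leaving the underlying vector space unchanged, it fits into a strictly commutative triangle $U = U' \circ \Phi$ with the respective forgetful functors $U: \PMod^H \to \Vect_k$ and $U': {}_{(H^*)_{par}}\Mod \to \Vect_k$.

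For any associative $k$-algebra $A$, the forgetful functor $U': {}_A\Mod \to \Vect_k$ is well-known to admit a right adjoint $R': V \mapsto \Hom_k(A, V)$, where $A$ acts on $\Hom_k(A, V)$ by $(a \cdot f)(b) = f(ba)$, with counit $\epsilon'_V$ the evaluation at $1_A$ and unit $\eta'_M(m)(a) = a \cdot m$. The induced comonad $U'R' = \Hom_k(A, -)$ then has evaluation at $1_A$ as counit and dualization of the multiplication as comultiplication. Taking $A = (H^*)_{par}$ produces the comonad targeted in the statement. Now the right adjoint $R$ from \thref{adjoint} is right adjoint to $U = U' \Phi$, so $\Phi \circ R$ is right adjoint to $U'$. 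Essential uniqueness of adjoints yields a canonical natural isomorphism $\Phi \circ R \cong \Hom_k((H^*)_{par}, -)$ intertwining the units and counits, and post-composition with $U'$ (together with $U' \Phi = U$) gives a natural isomorphism of functors $\C = UR \cong \Hom_k((H^*)_{par}, -)$. Because the comonad structure on the composite of an adjunction is built entirely from the adjunction data, this upgrade to an isomorphism of comonads is automatic.

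The only real obstacle is to exhibit the comparison concretely if one wishes to avoid the abstract uniqueness argument. Here one would use \reref{RV}(ii) to identify, in the finite-dimensional case, $R^0(V) = V \hot \hat{C}(H) \cong \Hom_k(T(H^*), V)$ via the natural isomorphism $(H^{\ot n})^* \cong (H^*)^{\ot n}$, and then check that the axioms of \leref{biggestsub} cut out exactly the subspace $\Hom_k((H^*)_{par}, V)$, i.e.\ the $k$-linear maps annihilating the ideal in $T(H^*)$ defining $(H^*)_{par}$ in \thref{Hpar}. I expect the bookkeeping with the dual basis in this direct verification to be the main practical obstacle, whereas the categorical route reduces everything to the formal statement that adjoints are unique up to a unique natural isomorphism compatible with the adjunction data.
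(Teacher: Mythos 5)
Your argument is correct and follows essentially the same route as the paper: both reduce the claim to the isomorphism of categories $\PMod^H \cong {}_{(H^*)_{par}}\Mod$ commuting with the forgetful functors to $\Vect_k$, identify the comonad on $\Vect_k$ induced by the forgetful functor from $(H^*)_{par}$-modules as $\Hom_k((H^*)_{par},-)$, and invoke uniqueness of (right) adjoints to transport the comonad structure. The paper likewise relegates the explicit dual-basis comparison $\alpha,\beta$ between $R(V)$ and $\Hom_k((H^*)_{par},V)$ to a supplementary remark rather than making it the core of the proof.
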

\begin{proof}
As we have seen (see \cref{th:comonad}), the category of partial $H$-comodules is isomorphic to the category of comodules over a comonad $\C$ on $\Vect_k$.
	Let \(H\) be a finite-dimensional Hopf algebra.  We know from \cite[Theorem 4.14]{coreps} that in this case, the category of partial $H$-comodules is isomorphic to the category of partial $H^*$-modules, which is in turn isomorphic to the category of $(H^*)_{par}$-modules. The latter is nothing else than the Eilenberg-Moore category over the monad $-\ot (H^*)_{par}$, which is known (see, for example \cite[Definition 2.22]{Gabibook}) to be isomorphic to the Eilenberg-Moore category over the comonad $\Hom_k((H^*)_{par}, -)$. Combining all these observations, we obtain the following commutative diagram of functors, where the horizontal functors are isomorphisms of categories and the vertical arrows (indicated by $U$) are forgetful functors.
	\[
	\begin{tikzcd}
	\Vect_k^\CC \arrow[leftrightarrow]{rr}{\cong} \arrow{drr}[swap]{U} && \PMod^H \arrow[leftrightarrow]{rr}{\cong} \arrow{d}{U} && \Vect_k^{\Hom_k((H^*)_{par},-)} \arrow{dll}{U}\\
	&& \Vect_k
	\end{tikzcd}
	\]
By uniqueness of adjoints, it follows that the isomorphisms of categories also commute with the `free' functors. From this, we can conclude that there is an isomorphism of comonads $\C\cong \Hom_k(H^*_{par}, -)$.

For sake of completion, let us state the explicit form of this isomorphism. Fixing a base \(\{e_1, \dots, e_n\}\) and dual base \(\{f_1, \dots, f_n\}\) for $H$, we can define for any vector space $V$ a map
	$$\alpha:\Hom_k(H^*_{par},V)\to V\hot \hat C(H)$$
	by 
	\[\alpha(\varphi)= \left( \sum_{i_1, \dots, i_k = 1}^n \varphi([f_{i_1}] \cdots [f_{i_k}]) \otimes e_{i_1} \otimes \cdots \otimes e_{i_k}\right)_{k\ge 0},\]
	One can check that this map corestricts to an isomorphism onto $R(V)$, whose inverse is given by 
	\[\beta:R(V)\to \Hom_k(H^*_{par}, V),\ f\mapsto ([f_{i_1}] \cdots [f_{i_k}] \mapsto (V \otimes f_{i_1} \otimes \cdots \otimes f_{i_k})(f^k)),\]
	which provides the required isomorphism of comonads.
	\end{proof}

\begin{remark}
	Endowing \(\Hom_k(H^*_{par}, V)\) with the product topology (where both \(H^*_{par}\) and \(V\) are discrete) and \(\C(V)\) with the induced topology from \(V \hotimes \hat{C}(H)\), then the isomorphism
	\[\C(V) \cong \Hom_k(H^*_{par}, V)\]
	is in fact an isomorphism of topological vector spaces. 
	
	Indeed, the sets (for all \(k \in \N\))
	\[U_k = \{f \in \C(V) \mid f^k = 0\}\]
	form a subbasis of neighbourhoods for \(\C(V)\) (i.~e.~the finite intersections of these sets give a basis of neighbourhoods) and they correspond to 
	\[\tilde{U}_k = \{\varphi : H^*_{par} \to V \mid \varphi([f_{i_1}] \cdots [f_{i_k}]) = 0 \ \forall i_1, \dots, i_k  \in \{1, \dots, n\}\}\]
	under the isomorphism. Since these sets form a subbasis of neighbourhoods for \(\Hom_k(H^*_{par}, V)\), the linear isomorphism is a homeomorphism. 
	
	This shows in particular that \(\C(V)\) is complete for any vector space \(V\), since it is linearly homeomorphic to a product of discrete spaces. In fact, when \(H\) is finite dimensional, \(R^0(V) = V \hotimes \hat{C}(H)\) (see \cref{rm:RV}), of which \(R(V)\) is a closed subspace as an intersection of kernels of continuous linear maps.
\end{remark}

\section*{Conclusions and future directions}\label{future}

As mentioned in the introduction (see \cite{ABV}), to any Hopf algebra $H$, one can associate an algebra $H_{par}$ whose category of modules is isomorphic to the category of partial modules over $H$. In \cite{coreps} it was proven that the category of all regular comodules over a Hopf algebra $H$ is isomorphic to the category of comodules over a coalgebra $H^{par}$ (see also \cref{regularH}). In particular, for a regular Hopf algebra the full category of partial comodules is isomorphic to the category of comodules over $H^{par}$.

In this paper we showed that for any Hopf algebra $H$, one can find a comonad $\CC$ whose Eilenberg-Moore category is equivalent to the category of partial $H$-modules. In the non-regular case however (such as Sweedler's Hopf algebra \cref{ex:irregular} and \(U_q(\mathfrak{sl}_2)^\circ\) \cref{ex:quantum}), this comonad is {\em not} induced by a coalgebra by \cref{regularH}. 

In case of a finite dimensional Hopf algebra $H$, we have shown in \cref{pr:finitedimensional} that the underlying functor of the comonad $\C$ is representable, that is, it is induced by an algebra $R$ such that $\C$ is naturally isomorphic to $\Hom_k (R,-)$. Therefore, the Eilenberg-Moore category of $\CC$ coincides in this case with the category of modules over $R$. Let us remark that in general (when the Hopf algebra $H$ is allowed to be infinite dimensional), no such algebra $R$ exists.
Indeed, a counter example of this situation is given by a non-trivial connected algebraic group $G$. In this case we know by Theorem \ref{th:connectedgroup} that the category of partial comodules over $\Oo(G)$ is just the category of (usual, global) comodules over $\Oo(G)$. If it would be equivalent to the Eilenberg-Moore category over $\Hom(R,-)$ for some algebra $R$, hence to the category of modules over $R$, then by \cref{equivcatfindim} we know that $R$, and and consequently $\Oo(G)$, need to be finite dimensional, which is only possible if $G$ is the trivial group.

These observations show that in order to describe partial comodules over arbitrary Hopf algebras, the comonadic approach developed in this paper is unavoidable. 

In \cite{ABV} it was moreover shown that $H_{par}$ can be endowed with the structure of a Hopf algebroid, and dually in \cite{coreps} it was shown that $H^{par}$ has the structure of a Hopf coalgebroid. These results imply in particular that the categories of partial modules and regular partial comodules over $H$ are closed monoidal.

In view of these results, it is tempting to expect that the comonad $\C$ constructed in this paper would have the structure of a Hopf comonad, which would turn the category of partial comodules into a closed monoidal category. The obstacle to obtain such a result is however that at this moment, it is unclear what would be the monoidal base category over which this Hopf comonad lives, and in particular what would be the monoidal unit. Indeed, the comonad we constructed is a comonad over $\Vect_k$. In the regular case, this comonad corresponds to the comonad $-\ot H^{par}$ on $\Vect_k$ induced by the coalgebra $H^{par}$ (similarly, we have the monad $-\ot H_{par}$ on $\Vect$ associated induced by the total algebra of the Hopf algebroid $H_{par}$). However, to invoke the the Hopf coalgebroid structure of $H^{par}$, one needs to lift this comonad to a comonad on the monoidal category of $C^{par}$-bicomodules, where $C^{par}$ is the `universal copar coalgebra' which was also constructed in \cite{coreps} and which is a suitable quotient of $H^{par}$. In our current setting, one should therefore construct a suitable ``copar comonad" whose Eilenberg-Moore category is proven to be monoidal, and lift $\CC$ to this category. 

In case $H$ is finite dimensional, we have observed that $\CC\cong \Hom_k((H^*)_{par},-)$. Since \((H^*)_{par}\) 
is an algebra, its full dual \(((H^*)_{par})^*\) is a topological coalgebra. Moreover \(\Hom_k((H^*)_{par}, V) \cong V \hotimes ((H^*)_{par})^*\) 
as vector spaces (see \cite[Theorem 4.4]{takeuchi}, and the isomorphism is a homeomorphism because \(V\) is discrete. This suggests that one would be able to describe partial comodules over \(H\) as (discrete) topological comodules over this topological coalgebra. 

All this will be part of future investigations.

\subsection*{Acknowledgements}
For this research, JV would like to thank the FWB (f\'ed\'eration Wallonie-Bruxelles) for support through the ARC project ``From algebra to combinatorics, and back'' and the ``Fonds Thelam'' for support through the project ``Partial symmetries of Non-commutative spaces''. 
JV and WH thank Departamento de Matem{\'a}tica, Universidade Federal de Santa Catarina, Brazil and in particular Eliezer Batista and Felipe Castro for warm hospitality during their respective visits in November 2019, when this work was initiated and April 2022, when this work was finalized.
JV thanks Paolo Saracco for interesting discussions about partial (co)representations and being a \TeX-wizard.

\end{document}